\newtheorem{theorem}{Theorem}[section]
\newtheorem{lemma}[theorem]{Lemma}
\newtheorem{cor}[theorem]{Corollary}
\newtheorem{prop}[theorem]{Proposition}
\theoremstyle{definition}
\newtheorem{example}[theorem]{Example}
\newtheorem{defn}[theorem]{Definition}
\newtheorem{defns}[theorem]{Definitions}
\newtheorem{rmk}[theorem]{Remark}
\newtheorem{notn}[theorem]{Notation}
\newcommand{\qnum}[2]{[#1]_{#2}}
\DeclareMathOperator{\id}{id}
\DeclareMathOperator{\supp}{supp}
\DeclareMathOperator{\ch}{char} \DeclareMathOperator{\ann}{ann}
\DeclareMathOperator{\aut}{Aut}
\DeclareMathOperator{\chr}{char}
\newcommand{\Z}{{\mathbb Z}}
\newcommand{\C}{{\mathbb C}}
\newcommand{\N}{{\mathbb N}}
\newcommand{\Q}{{\mathbb Q}}
\newcommand{\F}{{\mathbb F}}
\newcommand{\R}{{\mathbb R}}
\newcommand{\Y}{\mathcal Y}
\newcommand{\X}{\mathcal X}
\newcommand{\q}{\mbox{$\overline{q}$}}
\begin{document}
\title{Simple ambiskew polynomial rings}

\author{David A. Jordan}
\address{School of Mathematics and Statistics\\
University of Sheffield\\
Hicks Building\\
Sheffield S3~7RH\\
UK}
\email{d.a.jordan@sheffield.ac.uk}
\author{Imogen E. Wells}

\subjclass[2010]{Primary 16D30; Secondary 16S36, 16W20, 16W25, 16U20}

\keywords{Simple ring, skew polynomial ring}

\begin{abstract}
We determine simplicity criteria in characteristics $0$ and $p$ for a ubiquitous class of iterated skew polynomial rings in two indeterminates over a base ring. One obstruction to simplicity is the possible existence of a canonical normal element $z$. In the case where this element exists we give simplicity criteria for the rings obtained by inverting
$z$ and the rings obtained by factoring out the ideal generated by $z$. The results are illustrated by numerous examples including higher quantized Weyl algebras and generalizations of some low-dimensional symplectic reflection algebras.
\end{abstract}

\maketitle

\section{Introduction}
The first Weyl algebra $A_1(\F)$ over a field $\F$ of characteristic $0$ is the best known example of a simple skew polynomial ring of the form $R[x;\delta]$ where $\delta$ is a derivation of $R$. It illustrates the well-known
result that, for an algebra $R$ over a field $\F$ of characteristic $0$, $R[x;\delta]$ is simple if and only if
$R$ contains no non-zero proper $\delta$-invariant ideal and the derivation $\delta$ is outer. See \cite[Theorem 3.2]{CozF}, \cite[Theorem 1.8.4]{McCR} or \cite[Proposition 2.1]{GW}. The algebra $A_1(\F)$ is obtained on taking $R=\F[y]$ and $\delta=\frac{d}{dy}$.

The situation for simple skew polynomial rings of the form $R[x;\alpha,\delta]$ where $\alpha$ is an endomorphism of $R$ and $\delta$ is an $\alpha$-derivation of $R$ is less well understood and
there are few documented examples in which $\alpha$ is not an inner automorphism.
One significant but difficult example, due to Cozzens \cite{Coz} and fully documented in \cite{CozF}, features a division ring in the role of $R$ and a non-surjective endomorphism in the role of $\alpha$ and has interesting asymmetric properties. Detailed examples with an automorphism $\alpha$ that is not inner are surprisingly difficult to track down in the literature. Results are also mostly restricted to division rings; see the survey in the introduction to the recent
paper \cite{oinertetal}. Here we find simplicity criteria for a class of iterated skew polynomial rings in two indeterminates $x$ and $y$ over a base ring $A$ which, for convenience, we assume to be an $\F$-algebra for a field $\F$. These have been studied, at a variety of levels of generalisation, in a sequence of papers including  \cite{itskew,warfmem,htone,fds,qweyl,ambi} and were given the name \emph{ambiskew polynomial ring} in \cite{ambi}.

For our purposes here, the construction of an ambiskew polynomial ring $R(A,\alpha,v,\rho)$ requires three commuting automorphisms $\alpha$, $\beta$, $\gamma$ of $A$, with $\beta=\alpha^{-1}\gamma$, a non-zero element $\rho$ of $\F$ and an element $v$ of $A$ such that $va=\gamma(a)v$ for all $a\in A$ and $v=\gamma(v)$.  The indeterminates satisfy the relations
$xy=\rho yx+v$ and, for all $a\in A$,
$ya=\alpha(a)y$ and $xa=\beta(a)x$. Thus $y$ is adjoined in the formation of $A[y;\alpha]$ and the adjunction of $x$
involves extending $\beta$ to an automorphism of $A[y;\alpha]$, with $\beta(y)=\rho y$, and then forming
$A[y;\alpha][x;\beta,\delta]$ where $\delta$ is a $\beta$-derivation of $A[y;\alpha]$ such that $\delta(A)=0$ and $\delta(y)=v$. If $v$ is central then $\beta=\alpha^{-1}$. Taking $A=\F$, $\alpha=\beta=\id_\F$ and $\rho=v=1$ gives the first Weyl algebra $A_1(\F)$ and iteration yields the higher Weyl algebras $A_n(\F)$, $n\geq 2$, where $A=A_{n-1}(\F)$, $\alpha=\beta=\id_A$ and $\rho=v=1$.

The development of the theory of ambiskew polynomial rings began in \cite{itskew} where
$R$ is commutative, $\rho=1$ and $v=u-\alpha(u)$ for some $u\in A$. Examples included
the enveloping algebra $U(sl_2)$, its quantization $U_q(sl_2)$ and
the coordinate ring $\mathcal{O}_q(M_2(\F))$ of quantum $2\times 2$ matrices. Each of these examples has a distinguished central element, namely the quantum determinant in $\mathcal{O}_q(M_2(\F))$ and the Casimir elements in $U(sl_2)$ and $U_q(sl_2)$. These are examples of a general phenomenon. In the construction presented in \cite{itskew}, the element $xy-u=yx-\alpha(u)$ is central and is called the \emph{Casimir element}.
The algebras covered in \cite{itskew} include the algebras considered by Smith in \cite{psmith} and that paper heavily influenced the subsequent development of the theory of ambiskew polynomial rings.

Arbitrary non-zero values for $\rho$ were introduced in \cite{htone,fds}, where $v$ became $u-\rho \alpha(u)$ and the Casimir element became $xy-u=\rho(yx-\alpha(u))$. Although not central when $\rho\neq 1$, $xy-u$ is normal.
Among the new examples included by this generalization were the quantized Weyl algebra $A_1^q(\F)$, where, for $q\in \F\backslash\{0,1\}$, $xy-qyx=1$, the dispin enveloping algebra, where $\rho=-1$, and an algebra introduced by Woronowicz \cite{wor} in the context of quantum groups. At this stage, the Weyl algebra $A_1(\F)$ was excluded and there were no simple examples, due to the existence of the normal Casimir element which can never be a unit. In \cite{jw}, $A$ remained commutative but no conditions were imposed on the element $v$. The condition that $v=u-\rho\alpha(u)$ for some $u\in A$, giving rise to a normal Casimir element, was named {\it conformal} and its negation was named {\it singular}. Thus singularity is a necessary condition for simplicity and $A_1(\F)$ is a singular ambiskew polynomial ring.

To allow for iteration,
and application to the higher quantized Weyl algebras arising from the quantum calculus of Maltsiniotis \cite{malt},
noncommutative coefficient rings where introduced in \cite{qweyl} but only in the conformal case with  $v=u-\rho\alpha(u)$ for some $u\in A$ such that $ua=\gamma(a)u$ for all $a\in A$.
The higher quantized Weyl algebra $A_n^{\Lambda,\q}$ will be specified in detail in Section 6 but a brief discussion of the second quantized Weyl algebra, which requires three non-zero parameters $q_1,q_2$ and $\lambda$, will set the pattern and illustrate the role of Casimir elements in iteration. Renaming the generators, the first quantized Weyl algebra $A_1^{q_1}(\F)$ is generated by $x_1$ and $y_1$ subject to the relation \begin{equation}\label{xyq1}
x_{1}y_{1}-q_1y_{1}x_{1}=1.\end{equation}
Let $v=1+(q_1-1)y_1x_1$ which, if $q_1\neq 1$, is a multiple of the first Casimir element and, for all $q_1$, satisfies
the equations $vy_1=q_1y_1v$ and $vx_1=q_1^{-1}x_1v$. The second quantized Weyl algebra $A_2^{\lambda,q_1,q_2}(\F)$
is constructed using the $\F$-automorphisms $\alpha$ and $\beta$ such that $\alpha(x_1)=\lambda x_1$, $\alpha(y_1)=
\lambda^{-1}y_1$, $\beta(x_1)=(q_1\lambda)^{-1}x_1$ and $\beta(y_1)=q_1\lambda y_1$. Thus $A_2^{\lambda,q_1,q_2}(\F)$
is generated by $x_1,y_1,x_2$ and $y_2$ subject to \eqref{xyq1} and the further relations
\begin{eqnarray*}
y_{2}x_{1}&=&\lambda x_{1}y_{2},\\
y_{2}y_{1}&=&\lambda^{-1}y_{1}y_{2},\\
x_{2}x_{1}&=&(q_1\lambda)^{-1} x_{1}x_{2},\\
x_{2}y_{1}&=&q_1\lambda y_{1}x_{2},\\
x_{2}y_{2}-q_2y_{2}x_{2}&=&1+(q_{1}-1)y_{1}x_{1}.\\
\end{eqnarray*}
When $q_1=q_2=\lambda=1$ this is the Weyl algebra $A_2(\F)$. The
element $v+(q_{2}-1)y_{2}x_{2}$, which is either a multiple of a second Casimir element or equal to $v$, is normal
and can be used for further iteration.

The higher generalized Weyl algebra $A_n^{\Lambda,\q}$, where $\q=(q_{1},\ldots,q_{n})$ is an $n$-tuple of elements
of $\F$, and $\Lambda=(\lambda_{i,j})$ is an appropriate $n\times n$ matrix over $\F$,
 has $2n$ generators
$y_1,x_1,\ldots,y_n,x_n$ and is constructed by iteration. It has $n$ distinguished normal elements
$v_1,v_2,\ldots,v_n$ which are distinct if each $q_i\neq 1$.
It is shown in \cite{qweyl} that if $q_i\neq 1$ for all $i$, the algebra
obtained by inverting these $n$ normal elements is simple. We shall see that if each $q_j=1$ and $\ch \F=0$ then the higher quantized Weyl algebra is itself simple. This follows from one of our main results, Theorem~\ref{r-thm0}, which says that if $\ch \F=0$ then the ambiskew polynomial ring $R(A,\alpha,v,\rho)$ is simple if and only if
\begin{enumerate}
\item $A$ has no non-zero proper ideal invariant under $\alpha$,
\item $R$ is singular, and
    \item for all $m\geq 1$, the element $\sum_{l=0}^{m-1}\rho^{l}\alpha^{l}(v)$ is a unit.
\end{enumerate}
A more complex criterion for non-zero characteristic appears in Theorem~\ref{r-thmp}.

When $\ch \F=0$, the simple quantized Weyl algebra $A_2^{\lambda,1,1}(\F)$ has a claim to being the most accessible example of a simple skew polynomial ring of the form
$B[x;\alpha,\delta]$ with $\alpha$ not inner.
Two competitors, whose simplicity follows from Theorem~\ref{r-thm0}, are skew versions of the first Weyl algebra that have not, to our knowledge, previously appeared in the literature. For these, view $\C$ as an $\R$-algebra with conjugation as an $\R$-automorphism $\alpha$. We shall see that the $\R$-algebra extension of $\C$ generated by $x$ and $y$ subject to the relations
\[xi=-ix,\quad yi=-iy,\quad xy-yx=1\]
is a simple ambiskew polynomial ring, as is the $\R$-algebra in which the third of these relations is replaced by $xy+yx=i$.

New examples of ambiskew polynomial rings have continued to emerge, including the down-up algebras of Benkart and Roby \cite{benkartroby}, the generalized down-up algebras of Cassidy and Shelton \cite{CassShel} and, most recently, the augmented down-up algebras of Terwilliger and Worawannatoi \cite{terwor}. To accommodate the non-Noetherian down-up algebras, the definition of ambiskew polynomial ring was amended to allow $\alpha$ to be non-bijective and $\rho$ to be zero but here we shall assume that $\alpha$ is bijective and $\rho=0$.

In a higher quantized Weyl algebra with each $q_i\neq 1$, the only barrier to simplicity is the existence of the normal Casimir elements. The localization obtained by inverting these elements is simple. In Section 4, we find a simplicity criterion for the localization of a conformal ambiskew ring at the powers of the Casimir element $z$ to be simple. The barrier can also be removed by factoring out the ideal generated by $z$ which yields a
generalized Weyl algebra in the sense of Bavula \cite{vlad1}. A mild generalization is needed to cover the possibility that $v$ is not central. In Section 5, we give a simplicity criterion generalizing simplicity criteria from \cite[6.1 Theorem]{primitive} and its subsequent generalization \cite[Theorem 4.2]{vlad5}. Section 6 presents the application of results from Sections 3, 4 and 5 to examples.

All the examples mentioned so far are domains but simple ambiskew polynomial rings may have zero-divisors. There has been much interest in the symplectic reflection algebras of Etingof and Ginzburg \cite{EG}. In common with enveloping algebras, quantized enveloping algebras and quantum matrices, it turns out that low-dimensional examples of these algebras are ambiskew polynomial rings or iterated ambiskew polynomial rings over the group algebra of a finite cyclic group. Two such examples are documented as Examples \ref{sra1} and \ref{sra2}. Although simplicity criteria for these algebras are known, they provide nice illustrations of Theorem~\ref{r-thm0} and it is interesting to see how they fit into this picture. In both cases the parameter $\rho$ is $1$ but we shall also discuss some similar examples in which $\rho\neq 1$.

Some of the results presented here appeared in the PhD
thesis \cite{imthesis} of the second author.

\section{Preliminaries}\label{prelims}
\begin{defns}
Let $\gamma$ be an automorphism of a ring $A$ and let $v\in A$. If
$vA=Av$ then $v$ is {\it normal} in $A$ and if $va=\gamma(a)v$ for
all $a\in A$ and $\gamma(v)=v$, we shall say that $v$ is
$\gamma$-{\it normal}. If $v$ is a regular normal element then $v$
is $\gamma$-normal for a unique automorphism $\gamma$, while $0$
is $\gamma$-normal for every automorphism $\gamma$ of $A$.

Let $\Gamma$ be a set of automorphisms of $A$. An ideal $I$ of $A$
is a $\Gamma$-{\it ideal} of $A$ if $\gamma(I)\subseteq I$ for all
$\gamma\in\Gamma$. The ring $A$ is $\Gamma$-{\it simple} if $0$
and $A$ are the only $\Gamma$-ideals of $A$. If
$\Gamma=\{\alpha\}$ is a singleton, we write $\alpha$-ideal and
$\alpha$-simple rather than $\{\alpha\}$-ideal and
$\{\alpha$\}-simple.
\end{defns}

\begin{defn}\label{defambi}
The level of generality of the construction of ambiskew polynomial
ring varies through the papers \cite{itskew,warfmem,htone,fds,qweyl,ambi}. Here we shall
work with a common generalization of the constructions given in
\cite{qweyl} and \cite{jw}.  Let $\F$ be a field, and let $A$ be an
 $\F$-algebra. Let $\rho\in
\F\backslash\{0\}$ and let $v$ be a $\gamma$-normal element of $A$
for some  $\F$-automorphism $\gamma$ of $A$. Let
$\alpha\in\aut_{\F}A$ be such that $\alpha\gamma=\gamma\alpha$ and let $\beta:=\alpha^{-1}\gamma=\gamma\alpha^{-1}$, so that
$\alpha\beta=\gamma=\beta\alpha$. Extend $\beta$ to an $\F$-automorphism of
$A[y;\alpha]$ by setting $\beta(y)=\rho y$. By \cite[Exercise 2ZC]{GW}, there is a
$\beta$-derivation $\delta$ of $A[y;\alpha]$ such that $\delta(A)=0$ and
$\delta(y)=v$. The {\it ambiskew polynomial ring} $R(A,\alpha,v,\rho)$ is the iterated skew polynomial ring
$A[y;\alpha][x;\beta,\delta]$. Thus \begin{eqnarray*}
ya&=&\alpha(a)y\;\text{ for all }a\in A,\\
xa&=&\beta(a)x\;\text{ for all }a\in A\text{ and }\\
xy&=&\rho yx+v.
\end{eqnarray*}
Strictly speaking, we should write $R(A,\alpha,\gamma,v,\rho)$. However if $v$ is regular then $v$ determines $\gamma$ and the only zero-divisor appearing as $v$ in any example that we shall consider is $0$, in which case we take $\gamma=\id_A$.
There is some symmetry in the roles of $x$ and $y$ inasmuch that $R(A,\alpha,v,\rho)$ can also be presented as
$A[x;\beta][y;\alpha,\delta^\prime]$, where $\alpha(x)=\rho^{-1}x$, $\delta^\prime(A)=0$ and $\delta^\prime(x)=-\rho^{-1}(v)$.
If $v$ is central then the actions of $x$ and $y$ on $A$ involve
twisting using $\alpha$ from opposite sides: $ya=\alpha(a)y$ and
$ax=x\alpha(a)$. This is the reason for the name.
\end{defn}
Let
$v^{(0)}:=0$ and $v^{(m)} :=\sum_{l=0}^{m-1}\rho^{l}\alpha^{l}(v)$ for
$m\in\N$. In particular $v^{(1)}=v$.  Each  $v^{(m)}$ is $\gamma$-normal and
it is easily checked, by induction, that, for $m\geq 0$,
\begin{eqnarray}
\label{skewcomm} xy^{m}-\rho^{m}y^{m}x & = & v^{(m)} y^{m-1}\;\text{ and}
\\
 x^{m}y-\rho^{m}yx^{m} & = & x^{m-1}v^{(m)} = \beta^{m-1}(v^{(m)})x^{m-1}.
\end{eqnarray}

\begin{notn}\label{w}
Let $w:=xy=\rho yx+v$.  For all $a\in A$,
\[wa=(xy)a=\beta\alpha(a)xy=\gamma(a)w\] and the subring of $R$
generated by $A$ and $w$ may be identified with $A[w;\gamma]$.
Note that, as $\gamma(v)=v$, $w$ commutes with $v$, so that $v$ is normal in
$A[w;\gamma]$. Also $\gamma$ extends to an $\F$-automorphism of $A[w;\gamma]$, with
$\gamma(w)=w$, and $v$ is $\gamma$-normal in $A[w;\gamma]$ as well as in $A$. The $\F$-automorphisms  $\alpha$ and $\beta$ can be extended to $A[w;\gamma]$ by
setting $\alpha(w)=\rho^{-1}(w-v)$ and
$\beta(w)=\alpha^{-1}(w)=\rho w+\alpha^{-1}(v)$.
Then $yw=\alpha(w)y$, $xw=\beta(w)x$ and the following identities, in which $m\geq 1$, are easily established by induction on $m$:
\begin{eqnarray}
 \alpha^{m}(w)& =&\rho^{-m}(w-v^{(m)});\label{amw}\\
\alpha^{-m}(w)&=&\rho^{m}w+\beta^{m}(v^{(m)});\\
x^{m}y^{m} &=&\prod_{l=0}^{m-1}\alpha^{-l}(w);\label{xmym} \\
y^{m}x^{m} &=&
\prod_{l=1}^{m}\alpha^{l}(w)\label{ymxm}.
\end{eqnarray}
The factors in the products on the right hand sides of \eqref{xmym} and \eqref{ymxm} commute so the products are well-defined.
\end{notn}

\begin{defns}
\label{Casimir}
Suppose that there is a $\gamma$-normal element $u\in A$ such that
$v=u-\rho\alpha(u)$. Then the element
$z:=xy-u=\rho(yx-\alpha(u))$ is such that
$zy=\rho yz$, $zx=\rho^{-1} xz$, $za=\gamma(a)z$ for all $a\in A$ and $zu=uz$. Hence
$z$ is $\gamma$-normal in $R$, where $\gamma$ is extended to an $\F$-automorphism of $R$ by setting $\gamma(y)=\rho y$ and
$\gamma(x)=\rho^{-1}x$.
If such an element $u$ exists then it will be called a {\it splitting} element
and we shall say that the $4$-tuple
$(A,\alpha,v,\rho)$ is {\it conformal} and that $R$ is a {\it conformal ambiskew polynomial ring}; otherwise $(A,\alpha,v,\rho)$ and $R$ are {\it singular}.
In the conformal case there need not be a unique splitting element.  In general, if $u$ is a splitting element and $u^\prime\in A$ then $u^\prime$ is a splitting element if and only if $u-u^\prime$ is $\gamma$-normal and
$\alpha(u-u^\prime)=\rho(u-u^\prime)$. For example if $\rho=1$, $\gamma=\id_A$ and
$u$ is a splitting element then $u+\lambda$ is a splitting element for all $\lambda\in \F$. If $u$ is not unique we shall adopt a convention of choosing one splitting element $u$ and fixing it subsequently.
We then refer to the element
$z:=xy-u=\rho(yx-\alpha(u))$ as the {\it Casimir element}
of $R$. In the conformal case, note that $A[z;\gamma]=A[w;\gamma]$, that
$\alpha(z)=\rho^{-1}z$ and that $v^{(m)}=u-\rho^m\alpha^m(u)$ for all
$m\geq 0$.
\end{defns}

\begin{lemma} Suppose that $v$ is regular in $A$ and let $u\in A$ be such that, for some $n\geq 1$,  $ua=\gamma^n(a)u$ for all
$a\in A$. Then $\gamma(u)=u$. In particular, if $ua=\gamma(a)u$ for all $a\in A$ and
$v=u-\rho\alpha(u)$ then $u$ is a splitting element. \label{guu}
\end{lemma}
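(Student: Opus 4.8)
The plan is to play off the two normality relations that $v$ and $u$ satisfy against each other and then cancel $v$ using its regularity. Recall that $v$ is $\gamma$-normal, so by definition $va=\gamma(a)v$ for every $a\in A$ and also $\gamma(v)=v$. The hypothesis supplies a second relation, $ua=\gamma^{n}(a)u$ for all $a\in A$. The idea is that each of these relations can be evaluated at the \emph{other} element, and this produces two expressions for the product of $u$ and $v$ that can be compared.

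First I would put $a=u$ into the $\gamma$-normality relation for $v$, which gives $vu=\gamma(u)v$. Next I would put $a=v$ into the hypothesised relation for $u$, which gives $uv=\gamma^{n}(v)u$; since $\gamma(v)=v$ we have $\gamma^{n}(v)=v$, so this collapses to $uv=vu$. Chaining the two identities yields $uv=vu=\gamma(u)v$, and hence $\bigl(u-\gamma(u)\bigr)v=0$.

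At this point the regularity of $v$ closes the argument: as $v$ is not a zero-divisor and $(u-\gamma(u))v=0$, we conclude $u-\gamma(u)=0$, that is $\gamma(u)=u$. Since the proof is essentially immediate, the only thing to watch is bookkeeping rather than a genuine obstacle: one must cancel $v$ on the correct ($=$ right) side, so it is the non-right-zero-divisor aspect of regularity that is used, and one should note that $\gamma(v)=v$ is part of the definition of $\gamma$-normality and so is available without extra work.

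For the final assertion, take $n=1$, so the hypothesis reads $ua=\gamma(a)u$ for all $a\in A$, and suppose $v=u-\rho\alpha(u)$. The first part of the lemma gives $\gamma(u)=u$, and combined with the relation $ua=\gamma(a)u$ this is precisely the statement that $u$ is $\gamma$-normal. Since moreover $v=u-\rho\alpha(u)$, Definition~\ref{Casimir} identifies $u$ as a splitting element, as required.
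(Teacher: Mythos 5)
Your proof is correct and is essentially identical to the paper's: both compute $uv=\gamma^n(v)u=vu=\gamma(u)v$ using the two normality relations together with $\gamma(v)=v$, and then cancel the regular element $v$ to get $\gamma(u)=u$, after which the splitting-element claim is just Definition~\ref{Casimir}. Your remarks about which side the cancellation happens on are accurate bookkeeping, not a deviation from the paper's argument.
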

\begin{proof}
In this situation,
$uv=\gamma^n(v)u=vu=\gamma(u)v$ so, by the regularity of $v$, $\gamma(u)=u$. In the particular case, $\gamma(u)=u$ is the remaining condition required for $u$ to be a splitting element
\end{proof}

\begin{rmk} The use of the word {\it conformal} here is a possible source of confusion, due to the conformal $SL_2$
algebras of \cite{lleb2}, which take their name from physical
considerations and happen to be ambiskew polynomial rings. In \cite{MDON}, the term {\it $J$-conformal} is
used for this reason.
\end{rmk}

\begin{notn}
For a ring $A$, the group of units of $A$ will be denoted $U(A)$.
\end{notn}

\section{Simplicity of $R(A,\alpha,v,\rho)$} \label{simpleR}
Throughout this section, $R$ will denote an ambiskew polynomial ring
$R(A,\alpha,v,\rho)$ where $A$ is an algebra over a field $\F$. The set
${\Y}=\{y^i\}_{i\geq1}$ is a $\beta$-invariant right and left Ore set in $A[y;\alpha]$ and,
by \cite[Lemma 1.4]{g}, it is a right and left Ore set in $R$. The localization
$R_{\Y}$ is generated, as a ring extension of $A$, by $y$, $y^{-1}$ and $w$
or, in the conformal case, by $y$, $y^{-1}$ and $z$. It can be identified with
$A[w;\gamma][y^{\pm 1};\alpha]$, where $\alpha(w)=\rho^{-1}(w-v)$, or, in the conformal case,
with $A[z;\gamma][y^{\pm 1};\alpha]$, where $\alpha(z)=\rho^{-1}z$.
Similarly
${\X}=\{x^i\}_{i\geq1}$ is a right and left Ore set in $R$ and the
localization $R_{\X}$ can be identified with
$A[w;\gamma][x^{\pm 1};\beta]$, with $\beta(w)=\rho w+\alpha^{-1}(v)$ or, in the
conformal case, with $A[z;\gamma][x^{\pm 1};\beta]$, with  $\beta(z)=\rho z$.

The following lemma, which in the Noetherian case is an immediate consequence of \cite[2.1.16(vi)]{McCR}, will be useful in handling the passage between $R$ and $R_{\X}$ and in similar situations later in the paper.
\begin{lemma}\label{yIy}
Let $B$ be a ring with a regular element $y$ such that ${\Y}:=\{y^i\}_{i\geq1}$ is a right and left Ore set
and let $C=B_{\Y}=B{\Y}^{-1}={\Y}^{-1}B$ be the localization of $B$ at ${\Y}$. If $C$ is simple and $I$ is a non-zero ideal of $B$ then $y^s\in I$ for some $s\geq 0$.
\end{lemma}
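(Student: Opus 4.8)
The plan is to transport the problem into the simple ring $C$ and then pull the resulting information back to $B$. The first thing I would record is that, since $y$ is regular, every power $y^i$ is regular, so the localization map $B\to C$ is injective; I will henceforth regard $B$ as a subring of $C$. This injectivity is exactly what will let me convert a statement about $1$ in $C$ into the membership of a power of $y$ in $I$, and it is the place where regularity of $y$ (rather than mere normality or the Ore property) is essential.

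Next I would consider the two-sided ideal $J$ of $C$ generated by the image of $I$. Because $I\neq 0$ and $B\hookrightarrow C$, the ideal $J$ is non-zero, so simplicity of $C$ forces $J=C$ and in particular $1\in J$. The key preparatory step is to describe $J$ concretely. Using that $\Y$ is a left and right Ore set, every element of $C$ can be written both as $y^{-m}b$ and as $b'y^{-n}$ with $b,b'\in B$ and $m,n\geq 0$; since $I$ is a two-sided ideal of $B$, these $B$-factors can be absorbed into $I$, so that $J=\Y^{-1}I\Y^{-1}=\{\sum_i y^{-m_i}a_i y^{-n_i}:a_i\in I,\ m_i,n_i\geq 0\}$.

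I would then clear denominators in an expression $1=\sum_{i=1}^{r}y^{-m_i}a_i y^{-n_i}$. Because every denominator is a power of the \emph{single} element $y$, and powers of $y$ commute, this is clean: setting $M=\max_i m_i$ and using that $y^{M-m_i}a_i\in I$ (as $I$ is a left ideal) lets me factor a common $y^{-M}$ to the left; then setting $N=\max_i n_i$ and using that the resulting elements multiplied by $y^{N-n_i}$ remain in $I$ (as $I$ is a right ideal) lets me factor a common $y^{-N}$ to the right. This rewrites the equation as $1=y^{-M}\,a\,y^{-N}$ for a single element $a\in I$.

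Finally, multiplying on the left by $y^{M}$ and on the right by $y^{N}$ gives $y^{M+N}=a$ as elements of $C$; since both sides lie in $B$ and $B\hookrightarrow C$, this identity already holds in $B$, whence $y^{M+N}=a\in I$ and the lemma follows with $s=M+N$. The main obstacle, and the reason one cannot simply quote \cite[2.1.16(vi)]{McCR}, is the absence of a Noetherian hypothesis: I must verify by hand that the extended ideal is exactly $\Y^{-1}I\Y^{-1}$ and that a finite $C$-combination of elements of $I$ can be brought over a common two-sided denominator. Both facts hinge on $\Y$ being generated by the single regular element $y$, so that denominators commute and the passage between $B$ and $C$ is injective.
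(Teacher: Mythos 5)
Your proof is correct and takes essentially the same route as the paper's: both identify the extension of $I$ to $C$ with the set of sums of terms $y^{-a}iy^{-b}$ with $i\in I$, collapse such a sum to a single term $y^{-m}iy^{-n}$ using that $I$ is two-sided and powers of $y$ commute, and then use simplicity of $C$ to write $1=y^{-m}iy^{-n}$ and multiply through by $y^m$ and $y^n$ to get $y^{m+n}\in I$. The only cosmetic difference is that you introduce $J$ as the ideal of $C$ generated by $I$ and compute it via left/right fraction representations, whereas the paper defines $J$ directly as the set of such sums and verifies the ideal property from the Ore conditions; your explicit remark that regularity of $y$ makes $B\to C$ injective is a point the paper leaves implicit.
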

\begin{proof}
Let $J$ be the set of elements of $C$ that are finite sums of elements of the form $y^{-a}iy^{-b}$ where $i\in I$ and $a,b\geq 0$. As $y^{-a}iy^{-b}=y^{-(a+1)}(yi)y^{-b}=y^{-a}(iy)y^{-(b+1)}$, any element of $J$ can be written as a single term
$y^{-m}iy^{-n}$. Using the right (resp. left) Ore condition it is readily checked that $J$ is a right (resp. left) ideal of $C$. By the simplicity of $C$, $1=y^{-m}iy^{-n}$ for some $i\in I$ and some $m,n\geq 0$, whence
$y^{m+n}\in I$.
\end{proof}

\begin{lemma}
\label{Rsimlocsim} The ring $R$ is simple if
and only if $R_{\Y}$ is simple and  $v^{(m)}$ is a unit of $A$ for all
$m\geq 1$.
\end{lemma}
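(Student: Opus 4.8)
The plan is to prove the two implications separately, using throughout the free left $A$-module structure of $R$ with basis $\{y^ix^j\}_{i,j\geq 0}$ and the commutation identity \eqref{skewcomm}, namely $xy^{m}-\rho^{m}y^{m}x=v^{(m)}y^{m-1}$.

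For the implication ($\Leftarrow$), suppose $R_{\Y}$ is simple and each $v^{(m)}$ is a unit of $A$, and let $I$ be a non-zero ideal of $R$. Since $\Y=\{y^i\}_{i\geq1}$ is a right and left Ore set in $R$ with localization $R_{\Y}$, Lemma~\ref{yIy} applies and yields $y^{s}\in I$ for some $s\geq0$. I would then run a descent on $s$: if $s\geq1$ then, because $I$ is two-sided, both $xy^{s}$ and $y^{s}x$ lie in $I$, so by \eqref{skewcomm} $v^{(s)}y^{s-1}=xy^{s}-\rho^{s}y^{s}x\in I$; multiplying on the left by the unit $(v^{(s)})^{-1}$ gives $y^{s-1}\in I$. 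Iterating down to $s=0$ forces $1\in I$, so $I=R$ and $R$ is simple. This half is routine once Lemma~\ref{yIy} is in hand.

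For the implication ($\Rightarrow$), suppose $R$ is simple. First I would check that $R_{\Y}$ is simple by contracting ideals along the Ore localization: given a non-zero ideal $K$ of $R_{\Y}$ and $0\neq q\in K$, write $q=ry^{-n}$ with $r\in R$; then $r=qy^{n}\in K\cap R$ is non-zero because $y$ is regular, so $K\cap R$ is a non-zero ideal of $R$, hence equals $R$ by simplicity, and therefore $1\in K$. Next I would show each $v^{(m)}$ is a unit of $A$, taking $m$ to be least for which $v^{(m)}$ is a non-unit and deriving a contradiction. The case $v^{(m)}=0$ is disposed of first: then \eqref{skewcomm} gives $xy^{m}=\rho^{m}y^{m}x$, which makes $y^{m}$ normal in $R$ (it $\alpha^{m}$-commutes with $A$ and $\rho^{\pm m}$-commutes with $x$), so $Ry^{m}=y^{m}R$ is a two-sided ideal; it is non-zero since $y$ is regular and proper since a normal-form argument shows $y^{m}$ is not a unit, contradicting simplicity.

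The main obstacle is the remaining possibility, that $v^{(m)}$ is a non-zero non-unit of $A$. The natural line of attack is to show that then the two-sided ideal $Rv^{(m)}R$ is proper, contradicting simplicity. The delicate point is that, although $v^{(m)}$ is $\gamma$-normal in $A$, it is \emph{not} normal in $R$ (since $yv^{(m)}=\alpha(v^{(m)})y$ and $\alpha(v^{(m)})$ need not be a left multiple of $v^{(m)}$), so one cannot simply invoke simplicity to invert it, nor is generating $R$ as an ideal enough to force invertibility. Instead I would analyse the contraction and aim to prove that $Rv^{(m)}R\cap A$ is a proper ideal of $A$, equivalently that the quotient $R/Rv^{(m)}R$ is non-zero: using $xv^{(m)}=\beta(v^{(m)})x$ and $yv^{(m)}=\alpha(v^{(m)})y$ together with the normal form over $A$, the goal is to bound the $y^{0}x^{0}$-coefficient of every element of $Rv^{(m)}R$ inside a fixed proper ideal of $A$, so that $1\notin Rv^{(m)}R$. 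Here the minimality of $m$ (so that $v^{(1)},\dots,v^{(m-1)}$ are units) should help control how the lower-order terms produced by the $v^{(m')}$ in \eqref{skewcomm} feed back into the constant coefficient, and I expect this bookkeeping to be the crux of the whole proof.
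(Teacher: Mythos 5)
Your ($\Leftarrow$) half is complete and coincides with the paper's argument: Lemma~\ref{yIy} produces $y^{s}\in I$, and \eqref{skewcomm} together with the invertibility of $v^{(s)}$ drives the descent to $1\in I$. In the ($\Rightarrow$) direction, your contraction argument for the simplicity of $R_{\Y}$ is fine (it is exactly the content of \cite[2.1.16(iii)]{McCR}, which the paper cites), and your disposal of the case $v^{(m)}=0$ via the normal non-unit $y^{m}$ is correct as far as it goes.

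The genuine gap is the remaining case, $v^{(m)}$ a non-zero non-unit, which you explicitly leave as a plan (``I expect this bookkeeping to be the crux'') rather than a proof --- and the plan points in an unpromising direction. You propose to show $Rv^{(m)}R$ is proper by trapping the constant coefficients of its elements in a fixed proper ideal of $A$. But, as you yourself observe, $v^{(m)}$ is normal in $A$ and not in $R$: conjugation by powers of $y$ and $x$ replaces it by $\alpha^{n}(v^{(m)})$ and $\beta^{n}(v^{(m)})$, and these twists need not lie in $v^{(m)}A$ or in any single proper ideal. For instance $xv^{(m)}y=\beta(v^{(m)})(\rho yx+v)$ has constant coefficient $\beta(v^{(m)})v=\alpha^{-1}(v^{(m)})v$, and you supply no mechanism forcing such elements into a fixed proper ideal; moreover the minimality of $m$ works against you rather than for you, since the lower $v^{(j)}$ being units makes the contraction $Rv^{(m)}R\cap A$ larger, not smaller. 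The paper avoids the two-sided ideal generated by $v^{(m)}$ altogether. It builds the \emph{right} ideal $J$ spanned by the monomials $x^{i}ay^{j}$ with $i>0$ or $j\geq m$ or $a\in v^{(m)}A$; properness and right-ideal stability are checked with the identity $ay^{j}x=\rho^{-j}\bigl(x\beta^{-1}(a)y^{j}-v^{(j)}\gamma(a)y^{j-1}\bigr)$, where in the only dangerous case $j=m$ the term $v^{(m)}\gamma(a)$ stays in $v^{(m)}A$ by $\gamma$-normality --- no $\alpha$- or $\beta$-twist of $v^{(m)}$ ever appears. Then $\ann_{R}(R/J)$ is a two-sided ideal, proper because $1\notin J$, and non-zero because $Ry^{m}\subseteq J$ gives $y^{m}\in\ann_{R}(R/J)$, contradicting simplicity. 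Note that this one construction needs neither your minimal choice of $m$ nor the case split on $v^{(m)}=0$; to repair your proof, replace the $Rv^{(m)}R$ plan by this annihilator-of-a-cyclic-module device.
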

\begin{proof} Suppose that $R$ is simple. Then
$R_{\Y}$ is simple by \cite[2.1.16(iii)]{McCR} and its left analogue.
Suppose that, for some $m\geq 1$, $v^{(m)}$ is not a unit, and let $I=v^{(m)}A$
which,  as $v^{(m)}$ is normal, is an ideal of $A$.  Every element
of $R$ has a unique expression as a sum of elements of the form
$x^{i}a_{i,j}y^{j}$, where $a_{i,j}\in A, i\geq 0$ and $j\geq 0$.
Let $J$ be the subspace of $R$ spanned by the elements of the form
$x^iay^j$ where $i>0$ or $j\geq m$ or $a\in I$. Clearly $Jy\subseteq J$,
$JA\subseteq J$ and, as $v^{(m)}$ is not a unit, $1\notin J$. Also $x^iay^jx\in J$ if $i>0$ or, by \eqref{skewcomm}, if $j>m$.
By \eqref{skewcomm} and the $\gamma$-normality of $v^{(j)}$, \[ay^jx=\rho^{-j}(x\beta^{-1}(a)y^j-v^{(j)}\gamma(a)y^{j-1})\] for all $a\in A$.  It follows that $ay^jx\in J$ if either $j=m$, in which case $v^{(j)}\gamma(a)\in I$,
or $a\in I$. Consequently
$J$ is a proper
right ideal of $R$. Moreover $Ry^m\subset J$ so $\ann_{R}(R/J)$ is
a non-zero proper ideal of $R$, contradicting the simplicity of $R$. Hence $v^{(m)}$ is a unit
for all $m\geq 1$.

For the converse, suppose that $R_{\mathcal Y}$ is simple and that each $v^{(m)}$ is a
unit of $A$. Let $I$ be a non-zero ideal of $R$. By Lemma~\ref{yIy},
$y^{d}\in I$ for some $d\geq 0$. Choose the least such $d$. If
$d>0$, then, by \eqref{skewcomm}, \[y^{d-1} = (v^{(d)})^{-1}(xy^{d}-\rho^{d}y^{d}x)\in I,\]
contradicting the minimality of $d$. So $d=0$, $1\in I$, and
$I=R$, whence $R$ is simple.
\end{proof}

The following result is well-known; see \cite[1.8.5]{McCR} or
\cite[Theorem 1]{simskewl}.
\begin{theorem}
\label{wellk} If $\sigma$ is an automorphism of a ring $T$,
the skew Laurent polynomial ring $T[y^{\pm 1};\sigma]$
is simple if and only if $T$ is $\sigma$-simple and there does not exist $m\geq1$ such that $\sigma^m$ is
inner on $T$.
\end{theorem}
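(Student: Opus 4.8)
The plan is to work in $S:=T[y^{\pm1};\sigma]$, viewed as a $\Z$-graded ring with homogeneous components $S_i=Ty^i$ and relation $ya=\sigma(a)y$, and to prove the two implications separately; the substantive one is sufficiency of the two conditions. For necessity I would argue by contrapositive. If $K$ is a nonzero proper ideal of $T$ invariant under $\sigma$ (and hence, reading invariance with respect to $\langle\sigma\rangle$, under $\sigma^{-1}$), then the set $KS$ of Laurent polynomials with all coefficients in $K$ is a two-sided ideal of $S$: the only non-routine closure is $yKy^{-1}=\sigma(K)\subseteq K$ and $y^{-1}Ky=\sigma^{-1}(K)\subseteq K$. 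It is nonzero and, since $1\notin KS$, proper, so $S$ is not simple. If instead $\sigma^m=\mathrm{Inn}(u)$ for a unit $u\in T$ and some $m\geq1$, then $g:=u^{-1}y^m$ centralizes $T$, because $gt=u^{-1}\sigma^m(t)y^m=u^{-1}(utu^{-1})y^m=tg$, and one checks $gy=\zeta yg$ for a central unit $\zeta=u^{-1}\sigma(u)$. Thus $S$ contains a large commutative-over-$T$ Laurent subring $\bigoplus_k Tg^k$, from which a proper nonzero ideal can be produced (most transparently, when $\zeta=1$, the ideal $(g-1)S$, whose quotient is the nonzero free module $\bigoplus_{j=0}^{m-1}Ty^j$); the general twisted case is handled as in the cited sources.

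For sufficiency, let $I$ be a nonzero ideal of $S$ and, using that $y$ is a unit, normalise each nonzero element by a power of $y$ so that its lowest-degree term sits in degree $0$. Let $d$ be the minimal number of nonzero terms occurring among nonzero elements of $I$. I would first show that the set $\mathcal L$ consisting of $0$ together with the degree-$0$ coefficients of the normalised length-$d$ elements of $I$ is a two-sided ideal of $T$: multiplying such an $f$ on the left or right by $t\in T$ cannot increase the length, so it produces $ta_0$, respectively $a_0t$, in $\mathcal L$; and the operations $f\mapsto yfy^{-1}$ and $f\mapsto y^{-1}fy$ preserve length while replacing $a_0$ by $\sigma(a_0)$ and $\sigma^{-1}(a_0)$, so $\mathcal L$ is in fact a nonzero $\sigma$-ideal. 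By $\sigma$-simplicity $\mathcal L=T$, so $I$ contains an element $f=1+a_1y+\cdots+a_ny^n$ of length $d$ with constant term $1$.

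It remains to force $d=1$. If $d\geq2$ then, for every $t\in T$, the element $tf-ft\in I$ has vanishing degree-$0$ coefficient, hence length $<d$, hence is $0$; this yields $ta_i=a_i\sigma^i(t)$ for all $i$ and all $t$. Likewise $f-yfy^{-1}\in I$ has zero constant term and length $<d$, so it too vanishes, giving $\sigma(a_i)=a_i$ for all $i$. Choose $i\geq1$ with $a_i\neq0$, possible since $d\geq2$ while the constant term is $1$. Because $\sigma(a_i)=a_i$ and $ta_i=a_i\sigma^i(t)$, the set $a_iT=Ta_i$ is a nonzero $\sigma$-invariant two-sided ideal, so $a_iT=T$ by $\sigma$-simplicity; this makes $a_i$ a unit, and then $\sigma^i(t)=a_i^{-1}ta_i$ shows $\sigma^i$ is inner with $i\geq1$, contradicting the hypothesis. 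Hence $d=1$, $f=1\in I$ and $I=S$.

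The step I expect to be the crux is this last one: converting the intertwining relation $ta_i=a_i\sigma^i(t)$ into innerness of $\sigma^i$ requires $a_i$ to be a unit, and the only leverage is $\sigma$-simplicity applied to $a_iT$. That ideal is automatically two-sided, but for a general intertwiner it need not be $\sigma$-stable; what rescues the argument is the extra relation $\sigma(a_i)=a_i$, squeezed out of the minimality of $d$ by comparing $f$ with its conjugate $yfy^{-1}$. I would therefore be careful to extract \emph{both} commutation relations before invoking $\sigma$-simplicity. On the necessity side the analogous delicate point is the central twist $\zeta$ in the inner case, which is why I would lean on the cited references there rather than reproduce that bookkeeping.
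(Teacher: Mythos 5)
The paper gives no proof of this theorem (it simply cites \cite[1.8.5]{McCR} and \cite[Theorem 1]{simskewl}), so your attempt can only be compared with the standard arguments; your sufficiency half follows the standard line, but it contains a genuine gap at the point where you declare $\mathcal L$ to be an ideal. With ``length'' meaning the number of nonzero terms and with the supports of the minimal elements allowed to vary, $\mathcal L$ is not closed under addition: if $f=a_0+a_3y^3$ and $f'=a_0'+a_5'y^5$ are both normalised of minimal length $d=2$, then $f+f'$ has length $3$, and nothing guarantees that $a_0+a_0'$ occurs as the constant term of \emph{some} length-$2$ element of $I$. Since $\sigma$-simplicity is a statement about ideals, it cannot be invoked for $\mathcal L$ as you have defined it. The standard repair is to freeze the support: fix one normalised element of $I$ of minimal length $d$ with support $E\ni 0$, and let $\mathcal L$ consist of the constant terms of all elements of $I$ whose support is contained in $E$. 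Minimality forces every nonzero such element to have support exactly $E$, the set is now visibly an additive subgroup, and your multiplication and conjugation arguments turn it into a nonzero $\sigma$-ideal; the rest of your argument (killing $tf-ft$ and $f-yfy^{-1}$ by minimality, extracting $\sigma(a_i)=a_i$ and $ta_i=a_i\sigma^i(t)$, concluding $a_iT=Ta_i=T$, so $a_i$ is a unit and $\sigma^i$ is inner) is correct as written.

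The more serious gap is the one you flagged yourself but misjudged as ``bookkeeping'': in the necessity direction you only handle the inner case when $\zeta:=u^{-1}\sigma(u)=1$, i.e.\ when $\sigma^m$ is induced by a $\sigma$-fixed unit. When $\zeta\neq 1$ the element $g=u^{-1}y^m$ is not central, and the two-sided ideal generated by $g-1$ contains $y(g-1)y^{-1}=\zeta^{-1}g-1$, hence contains $(1-\zeta^{-1})g$; if $\zeta-1$ is a unit (e.g.\ $T=\C$, $\sigma=$ conjugation, $m=2$, $u=i$, $\zeta=-1$) this ideal is all of $S$, so your construction produces nothing. What is missing is the bridge: if $\sigma^m(t)=utu^{-1}$ then $\sigma$ has finite order $d\mid m$ on the centre $Z$, $\zeta$ is a central unit with $\prod_{i=0}^{m-1}\sigma^i(\zeta)=u^{-1}\sigma^m(u)=1$, and a Hilbert--90--type averaging over the cyclic group $\langle\sigma|_Z\rangle$ produces a central unit $P$ with $\zeta^m=P\sigma(P)^{-1}$; then $d':=Pu^m$ satisfies $\sigma(d')=d'$ and induces $\sigma^{m^2}$, so your $\zeta=1$ construction applies with $m^2$ in place of $m$. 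This step is precisely the difference between Theorem~\ref{wellk} and Theorem~\ref{notsowellk}: without it, what you have proved (after repairing the first gap) is Theorem~\ref{notsowellk}, namely that simplicity is equivalent to $\sigma$-simplicity together with no power of $\sigma$ being inner \emph{via a $\sigma$-fixed unit}, and it leaves open the possibility of a simple $S$ with some $\sigma^m$ inner via a non-fixed unit, which is exactly what Theorem~\ref{wellk} rules out.
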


For the following variation, see
\cite[Theorem 0]{simskewl}.
\begin{theorem}
\label{notsowellk}
If $\sigma$ is an automorphism of a ring $T$,
the skew Laurent polynomial ring $T[y^{\pm 1};\sigma]$
is simple if and only if $T$ is $\sigma$-simple and there does not exist $m\geq1$ for which there is a unit $d$ of $T$ such that $\sigma(d)=d$ and $\sigma^m(a)=dad^{-1}$ for all $a\in T$.
\end{theorem}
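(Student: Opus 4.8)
The plan is to work inside $S:=T[y^{\pm1};\sigma]$, graded by powers of $y$ with degree-$i$ part $Ty^i=y^iT$, and to prove both implications directly rather than to deduce them from Theorem~\ref{wellk}. Since $y$ is a unit, every nonzero $f\in S$ may be multiplied by a power of $y$ so that its support lies in $\{0,1,\dots,n\}$ with nonzero coefficients in degrees $0$ and $n$; I will call $n$ the \emph{length} of $f$. The feature to keep in view is that the refined condition, with a conjugating unit $d$ satisfying $\sigma(d)=d$, is exactly what the computations below force, so both directions come out cleanly without any recourse to a Hilbert~90 argument.

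For sufficiency, assume $T$ is $\sigma$-simple and that no $\sigma$-fixed inner power exists; let $I$ be a nonzero ideal of $S$ and let $n$ be the least length of a nonzero element of $I$. First I would show that the set $J$ of top coefficients of elements of $I$ supported in $\{0,\dots,n\}$ is a two-sided ideal of $T$: left multiplication by $T$ and right multiplication (which produces top coefficient $c\sigma^n(t)$) make it two-sided, while conjugation by $y^{\pm1}$, which preserves degrees, makes it $\sigma$-stable. By $\sigma$-simplicity $J=T$, so $I$ contains a monic element $g=\sum_{i=0}^{n}b_iy^i$ with $b_n=1$. Now I exploit minimality twice. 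The element $ygy^{-1}-g$ lies in $I$, has vanishing degree-$n$ coefficient, hence length $<n$, hence is $0$; this forces $\sigma(b_i)=b_i$ for all $i$. Likewise $tg-g\sigma^{-n}(t)$ has vanishing degree-$n$ coefficient and so is $0$, forcing $tb_i=b_i\sigma^{i-n}(t)$ for all $t\in T$ and all $i$. If $n=0$ then $1=b_0\in I$ and $I=S$. If $n>0$ and some $b_i\neq0$ with $i<n$, then $b_iT=Tb_i$ is a nonzero $\sigma$-stable ideal, so $b_i$ is a unit by $\sigma$-simplicity, and the intertwining relation reads $\sigma^{n-i}(a)=b_iab_i^{-1}$ with $\sigma(b_i)=b_i$ and $n-i\geq1$: a forbidden $\sigma$-fixed inner power. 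Hence $g=y^n$, a unit, and $I=S$, so $S$ is simple.

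For necessity, assume $S$ is simple. If $T$ had a proper nonzero $\sigma$-ideal $I$ then $\bigoplus_i Iy^i$ would be a proper nonzero two-sided ideal of $S$ (here one uses the standard fact that for an automorphism $\sigma$-simplicity may be tested against $\sigma$-stable ideals), so $T$ is $\sigma$-simple. Suppose next that $\sigma^m(a)=dad^{-1}$ for all $a$, with $d$ a unit and $\sigma(d)=d$, for some $m\geq1$. Then $c:=d^{-1}y^m$ commutes with $T$ because $\sigma^m$ is conjugation by $d$, and commutes with $y$ precisely because $\sigma(d)=d$; thus $c$ is a central unit of nonzero degree $m$. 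A short computation of extreme degrees shows that for every $u$ the support of $(c-1)u$ spans at least $m$ degrees, so $(c-1)u$ can never equal $1$; hence $c-1$ is a non-unit and $S(c-1)$ is a proper nonzero ideal, contradicting simplicity. This is where $\sigma(d)=d$ is indispensable: without it $c$ fails to commute with $y$ and the argument collapses.

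The main obstacle, and the heart of the matter, is the coefficient analysis in the sufficiency step: one must realise that two \emph{separate} reductions are needed — conjugation by $y$ to force each coefficient to be $\sigma$-fixed, and the bottom-normalised commutator $tg-g\sigma^{-n}(t)$ to force the intertwining relations — and that $\sigma$-simplicity then upgrades each nonzero lower coefficient to a unit, delivering precisely a $\sigma$-fixed conjugator for a power of $\sigma$. This also explains why I avoid reducing to Theorem~\ref{wellk}: mere innerness of $\sigma^m$ does not obstruct simplicity, only $\sigma$-fixed innerness does, so such a reduction would require showing that under $\sigma$-simplicity an inner power can always be re-expressed with a $\sigma$-fixed conjugator, a Hilbert~90 type statement over the merely commutative centre of $T$ that is more delicate than the direct argument above.
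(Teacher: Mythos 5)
Your proof is correct, but the comparison here is necessarily one-sided: the paper does not prove Theorem~\ref{notsowellk} at all, it simply quotes it from \cite[Theorem 0]{simskewl}. Your argument supplies a self-contained proof, and it does so by exactly the technique the paper uses for cognate statements elsewhere (compare the proof of Lemma~\ref{lmzsic}, which likewise forms leading-coefficient ideals, uses simplicity hypotheses to obtain a monic element of minimal degree, and then kills commutator-type expressions such as $g-zgz^{-1}$ and $bg-g\gamma^{-j}(b)$ by minimality --- precisely your two reductions). I checked the steps: the top-coefficient set $J$ is indeed a two-sided $\sigma$-stable ideal; the vanishing of $ygy^{-1}-g$ and of $tg-g\sigma^{-n}(t)$ by minimality gives $\sigma(b_i)=b_i$ and $tb_i=b_i\sigma^{i-n}(t)$; $\sigma$-simplicity upgrades each nonzero $b_i$ with $i<n$ to a unit because $b_iT=Tb_i$ is a nonzero $\sigma$-stable ideal, producing exactly a forbidden $\sigma$-fixed conjugator for $\sigma^{n-i}$; and in the converse direction $c=d^{-1}y^m$ is a central unit (centrality against $y$ is where $\sigma(d)=d$ enters), so $c-1$ is a nonzero central non-unit by the support argument. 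Your appeal to the ``standard fact'' that $\sigma$-simplicity can be tested on $\sigma$-stable ideals is also sound: if $\sigma(I)\subseteq I$ is proper and nonzero, then $\bigcup_{k\geq 0}\sigma^{-k}(I)$ is $\sigma$-stable, nonzero, and proper, since $1\in\sigma^{-k}(I)$ would force $1\in I$.

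One assertion in your closing discussion is, however, false, although it sits outside the proof proper: you claim that ``mere innerness of $\sigma^m$ does not obstruct simplicity, only $\sigma$-fixed innerness does.'' This contradicts Theorem~\ref{wellk} (McConnell--Robson \cite[1.8.5]{McCR}, \cite[Theorem 1]{simskewl}), which asserts that $T[y^{\pm 1};\sigma]$ fails to be simple as soon as some power $\sigma^m$, $m\geq 1$, is inner in the plain sense. Since both Theorem~\ref{wellk} and Theorem~\ref{notsowellk} are correct, their conjunction shows that for a $\sigma$-simple ring the existence of an inner power of $\sigma$ is \emph{equivalent} to the existence of a power that is inner via a $\sigma$-fixed unit; that equivalence is precisely the Hilbert-90-type statement you call delicate. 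You are right that it is delicate, and right that a direct proof of Theorem~\ref{notsowellk} is the clean way to proceed (indeed, proving both theorems directly is how the equivalence is obtained as a corollary); but you should not carry away the belief that plain innerness of a power is compatible with simplicity --- it is not.
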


\begin{cor}
\label{locsim} The localization $R_{\mathcal Y}$
is simple if and only if $A[w;\gamma]$ is $\alpha$-simple and $\alpha^m$ is
outer on $A[w;\gamma]$ for all $m\geq 1$.
\end{cor}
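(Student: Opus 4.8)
The plan is to observe that $R_{\Y}$ is nothing but a skew Laurent polynomial ring and then to read off the conclusion from Theorem~\ref{wellk}. The discussion opening this section records that $R_{\Y}$ may be identified with $A[w;\gamma][y^{\pm 1};\alpha]$, where $\alpha$ has been extended to an $\F$-automorphism of $T:=A[w;\gamma]$ via $\alpha(w)=\rho^{-1}(w-v)$ as in Notation~\ref{w}. Thus $R_{\Y}\cong T[y^{\pm 1};\sigma]$ with $T=A[w;\gamma]$ and $\sigma=\alpha$, and the whole task reduces to a single application of the known criterion.

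First I would confirm that the hypotheses of Theorem~\ref{wellk} are met, namely that $\sigma=\alpha$ is genuinely an automorphism of $T=A[w;\gamma]$; this is exactly the content of the extension constructed in Notation~\ref{w}, its inverse being the corresponding extension of $\alpha^{-1}$. With this in hand, Theorem~\ref{wellk} applied to $T[y^{\pm 1};\alpha]$ asserts simplicity precisely when $T$ is $\alpha$-simple and no $\alpha^m$ with $m\geq 1$ is inner on $T$. Rewriting the clause ``$\alpha^m$ is not inner for every $m$'' as ``$\alpha^m$ is outer on $A[w;\gamma]$ for all $m\geq 1$'' yields exactly the stated equivalence.

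There is no serious obstacle here: the substance of the corollary lies entirely in the identification $R_{\Y}=A[w;\gamma][y^{\pm 1};\alpha]$, which was established earlier, together with the cited Theorem~\ref{wellk}. The only point warranting a moment's care is making sure that the automorphism $\alpha$ of $A[w;\gamma]$ used in forming the skew Laurent ring coincides with the automorphism whose innerness is being tested, so that both the $\alpha$-simplicity and the outerness conditions are assessed over the same ring $T=A[w;\gamma]$; this is immediate from the construction.
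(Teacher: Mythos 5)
Your proposal is correct and is essentially identical to the paper's own proof, which simply cites the identification $R_{\Y}=A[w;\gamma][y^{\pm 1};\alpha]$ from the start of Section~\ref{simpleR} and applies Theorem~\ref{wellk}. Your additional remark that the extension of $\alpha$ to $A[w;\gamma]$ (via $\alpha(w)=\rho^{-1}(w-v)$ from Notation~\ref{w}) is an automorphism, and that this same extension is the one tested for innerness, is exactly the point the paper leaves implicit.
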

\begin{proof}
Given that $R_{\Y}=A[w;\gamma][y^{\pm 1};\alpha]$, this is immediate
from Theorem~\ref{wellk}.
\end{proof}

\begin{lemma}
\label{innerv0}
Let $m\geq 1$. If $\alpha^m$ is inner on
$A[w;\gamma]$ then $v^{(m)}=0$.
\end{lemma}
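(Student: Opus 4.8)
The plan is to unwind the meaning of ``inner'' and extract a single equation in degree zero. If $\alpha^m$ is inner on $A[w;\gamma]$ then there is a unit $d$ of $A[w;\gamma]$ with $\alpha^m(c)=dcd^{-1}$ for every $c\in A[w;\gamma]$, equivalently $\alpha^m(c)\,d=dc$. I would apply this with $c=w$ and substitute the formula $\alpha^m(w)=\rho^{-m}(w-v^{(m)})$ supplied by \eqref{amw}, obtaining the single relation $\rho^{-m}(w-v^{(m)})d=dw$ inside $A[w;\gamma]$.

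Next I would expand both sides as skew polynomials in $w$. Writing $d=\sum_{i=0}^{n}d_iw^i$ with each $d_i\in A$ and using $wa=\gamma(a)w$, the relation becomes
\[\rho^{-m}\sum_i\gamma(d_i)w^{i+1}-\rho^{-m}v^{(m)}\sum_i d_iw^i=\sum_i d_iw^{i+1}.\]
The whole point is that every term here carries a strictly positive power of $w$ except the single summand $-\rho^{-m}v^{(m)}d_0$. Hence comparing the coefficients of $w^0$ on the two sides forces $v^{(m)}d_0=0$ (recall $\rho\neq0$).

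It then remains to observe that $d_0$, the constant term of the unit $d$, is itself a unit of $A$. Writing $d^{-1}=\sum_j e_jw^j$ and reading off the degree-zero coefficient of each of the identities $dd^{-1}=1=d^{-1}d$ gives $d_0e_0=e_0d_0=1$, so $d_0\in U(A)$. Multiplying $v^{(m)}d_0=0$ on the right by $d_0^{-1}$ yields $v^{(m)}=0$, as required.

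The one subtlety, and the reason I would route everything through the constant term rather than through a naive degree count, is that $A$ is not assumed to be a domain: units of $A[w;\gamma]$ may genuinely have positive $w$-degree, so one cannot argue that $d$ lies in $A$. Isolating the coefficient of $w^0$ sidesteps this completely, since the only fact the conclusion needs is the invertibility of $d_0$ in $A$. (If one instead adopts the convention $\alpha^m(c)=d^{-1}cd$, the identical computation produces $d_0v^{(m)}=0$ and the same conclusion follows after multiplying on the left by $d_0^{-1}$.)
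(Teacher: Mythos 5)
Your proof is correct and takes essentially the same route as the paper's: both apply the inner-automorphism identity to the element $w$, substitute $\alpha^m(w)=\rho^{-m}(w-v^{(m)})$ from \eqref{amw}, compare constant coefficients to obtain $v^{(m)}d_0=0$, and conclude from the invertibility of the constant term $d_0$ in $A$. The only difference is presentational: you spell out (via $dd^{-1}=1=d^{-1}d$ in degree zero) why the constant term of a unit of $A[w;\gamma]$ is a unit of $A$, a fact the paper asserts without proof.
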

\begin{proof}
Suppose that $\alpha^{m}$ is inner on
$A[w;\gamma]$. Thus there is a unit $c=c_nw^n+\ldots+c_1w+c_0$ of $A[w;\gamma]$
such that $\alpha^{m}(w)c=cw$. By \eqref{amw}, $\rho^{-m}(w-v^{(m)})c=cw$ whence
$\gamma(c)w-v^{(m)}c=\rho^mcw$, where $\gamma$ is
extended to $A[w;\gamma]$ with $\gamma(w)=w$. Comparing constants in $A[w;\gamma]$
shows that $v^{(m)}c_0=0$. But as $c$ is a unit in $A[w;\gamma]$, $c_0$ is a unit in $A$ so $v^{(m)}=0$.
\end{proof}
\begin{rmk}
\label{vinner}
As each $v^{(m)}$ is $\gamma$-normal, if some $v^{(m)}$ is a
unit then $\gamma(a)=v^{(m)}a(v^{(m)})^{-1}$ for all $a\in A$ and every ideal of $A$ is a $\gamma$-ideal.
\end{rmk}

There are different simplicity criteria for $R$ in characteristic $0$ and characteristic $p$.  The next lemma is common to both.
\begin{lemma}\label{common}
Suppose that $A$ is $\alpha$-simple and let $J$ be a non-zero $\alpha$-ideal of
$A[w;\gamma]$. There exists a non-negative integer $m$ and an element $f=a_mw^{m}+a_{m-1} w^{m-1}+\ldots+a_1w+a_0\in J$ such that $a_m=1$ and, for $0\leq j\leq m$,
\begin{equation}
a_j=\sum_{i=0}^{m-j} \begin{pmatrix}m-i\\j\end{pmatrix}
                             \rho^{i}\alpha(a_{m-i})(-v)^{m-i-j},
\label{eqfwu}
\end{equation}
and, for all $a\in A$,
\begin{equation}
aa_{j}=a_{j}\gamma^{j-m}(a).
\label{eqgwu}
\end{equation}
\end{lemma}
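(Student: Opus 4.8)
The plan is to take an arbitrary non-zero $\alpha$-ideal $J$ of $A[w;\gamma]$ and extract from it an element of the required shape by a minimal-degree argument. First I would choose a non-zero $f\in J$ of least degree $m$ in $w$, say $f=b_mw^m+\dots+b_0$ with $b_m\neq 0$. Since $A$ is $\alpha$-simple, I want to normalise the leading coefficient to $1$: the set of leading coefficients of degree-$m$ elements of $J$, together with $0$, forms an ideal of $A$, and I would check it is $\alpha$-invariant (using that $\alpha(J)\subseteq J$ and that $\alpha(w)=\rho^{-1}(w-v)$ preserves the degree in $w$). By $\alpha$-simplicity this ideal is all of $A$, so after multiplying $f$ on the left by a suitable element of $A$ I may assume $a_m=1$.

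The two displayed formulas (\ref{eqfwu}) and (\ref{eqgwu}) should then come from forcing $f$ to be $\alpha$-invariant and normal in the appropriate twisted sense. For (\ref{eqgwu}), I would compute $af$ and $fa$ for $a\in A$ using $wa=\gamma(a)w$, so that $w^ja$ contributes $\gamma^j$ twists; comparing the element $af-?\,fa$ coefficient by coefficient, and again invoking minimality of $m$ to kill any lower-degree discrepancy, should pin down each $a_j$ up to the commutation rule $aa_j=a_j\gamma^{j-m}(a)$. The normalisation $a_m=1$ is consistent with this at $j=m$ since $\gamma^0=\id$. For (\ref{eqfwu}), the idea is that $\alpha^m(f)$ (or rather a comparison between $f$ and its image under a power of $\alpha$) must again lie in $J$ and have degree $\leq m$; applying $\alpha$ to $w$ via $\alpha(w)=\rho^{-1}(w-v)$ expands $w^{m-i}$ by the binomial theorem into powers of $w$ with coefficients involving $\rho$ and $(-v)$, and matching the coefficient of $w^j$ against the required minimal element yields the stated binomial sum. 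The powers $\alpha(a_{m-i})$ and the factor $\rho^i$ are exactly what the substitution $w\mapsto \rho^{-1}(w-v)$ produces after collecting terms.

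The main obstacle I expect is bookkeeping the interaction between the $w$-twist $\gamma$ and the $\alpha$-action in a noncommutative base, making sure the minimality argument is applied correctly so that the ``lower-order'' remainder is genuinely forced to vanish rather than merely lie in $J$. Concretely, after forming $\alpha(f)-\rho^{?}f$ or the relevant combination, I must verify its degree in $w$ is strictly less than $m$ and that its leading behaviour is controlled, so that by minimality it is $0$; this is where the precise exponents of $\rho$ and the twists by $\gamma$ must be tracked exactly, and an induction on $j$ (descending from $m$) is the natural way to organise the recursion (\ref{eqfwu}). The normality relation (\ref{eqgwu}) and the invariance relation (\ref{eqfwu}) are somewhat independent computations, so I would establish them separately rather than trying to combine them, using $\alpha$-simplicity only once—at the normalisation step—and otherwise relying purely on the minimal-degree constraint to force the coefficient identities.
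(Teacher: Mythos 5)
Your proposal follows the paper's proof essentially step for step: form the leading-coefficient ideal of degree-$m$ elements of $J$, use its $\alpha$-invariance (via $\alpha(w)=\rho^{-1}(w-v)$) and $\alpha$-simplicity to obtain a monic $f\in J$ at the minimal degree $m$, and then observe that the combinations $f-\rho^{m}\alpha(f)$ and $af-f\gamma^{-m}(a)$ lie in $J$, have degree strictly less than $m$ because their $w^{m}$-coefficients cancel, and are therefore zero by minimality, which upon comparing coefficients gives \eqref{eqfwu} and \eqref{eqgwu} exactly as in the paper (the precise scalars $\rho^{m}$ and $\gamma^{-m}$ that you left as question marks are determined, as you indicate, by forcing the leading terms to cancel). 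One small wording slip worth noting: since $A$ is noncommutative, you cannot in general arrange $a_{m}=1$ by a single left multiplication of a fixed $f$; rather, the fact that the leading-coefficient ideal equals all of $A$ means that $1$ is itself the leading coefficient of some (possibly different) degree-$m$ element of $J$, which is all the argument requires.
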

\begin{proof}
For $m\geq 0$, let \[\tau_m(J)=\{0\}\cup\{a\in A:aw^m+a_{m-1}w^{m-1}+\ldots+a_iw+a_0\in J\}.\] Then $\tau_m(J)$ is an ideal of $A$ and if $a_{0},\ldots, a_{m}\in A$ then
\[\alpha\left(\sum _{i=0}^{m}a_{i}w^{i}\right)= \sum
_{i=0}^{m}\rho^{-i}\alpha(a_{i})(w-v)^{i}.\] Thus $\tau_m(J)$ is an
$\alpha$-ideal of $A$, and, by $\alpha$-simplicity, $\tau_m(J)=A$
or $\tau_m(J)=0$. Let $m\geq 0$ be minimal such that $\tau_m(J)\neq 0$.
Since $\tau_m(J)=A$, there exist $a_{0},a_1\ldots, a_{m}\in A$, with $a_m=1$, such that
$f:=a_mw^{m}+a_{m-1} w^{m-1}+\ldots+a_1w+a_0\in J$.
Then $f-\rho^{m}\alpha(f)\in J$. As $w$ and $v$ commute,
\begin{eqnarray*}
f-\rho^{m}\alpha(f) & = &\sum _{l=0} ^{m} (a_{l} w^{l} -
                           \rho^{m-l}\alpha(a_{l}) (w-v)^{l})\\
                & = &\sum_{l=0}^{m} (a_{l} w^{l} -
                           \rho^{m-l}\alpha(a_{l}))
                             \left(\sum_{j=0}^{l} {l\choose j}(-v)^{l-j}w^{j}\right)\\
                & = &\sum_{j=0}^{m} \left(a_{j} -
                             \sum_{i=0}^{m-j} {{m-i}\choose j}
                             \rho^{i}\alpha(a_{m-i})(-v)^{m-i-j}\right)w^{j}.
\end{eqnarray*}
The coefficient of $w^m$  is $0$ so, by the minimality of $m$, $f-\rho^{m}\alpha(f)=0$ and \eqref{eqfwu} follows.

For all $a\in A$, $af-f\gamma^{-m}(a)\in J$ and
\begin{equation*}
af-f\gamma^{-m}(a)=\sum_{j=0} ^{m} (aa_{j}w^{j}
-a_{j}w^{j}\gamma^{-m}(a))=\sum_{j=0} ^{m} (aa_{j}-a_{j}
                                                       \gamma^{j-m}(a))w^{j}.
\end{equation*}
The coefficient of $w^m$ here is $0$, so by the minimality of $m$, $af-f\gamma^{-m}(a)=0$ for all $a\in A$
and \eqref{eqgwu} follows.
\end{proof}

\begin{lemma}
\label{lmwsiu} Suppose that $\chr \F=0$, that $v$ is regular in $A$ and that every
$\alpha$-ideal of $A$ is a $\gamma$-ideal. Then $A[w;\gamma]$ is
$\alpha$-simple if and only if $A$ is $\alpha$-simple and the
$4$-tuple $(A,\alpha,v,\rho)$ is singular.
\end{lemma}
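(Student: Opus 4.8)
The plan is to prove the two implications separately, using Lemma~\ref{common} for the substantive direction and explicit constructions of $\alpha$-ideals for the converse.

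For the direction assuming $A$ is $\alpha$-simple and $(A,\alpha,v,\rho)$ singular, I would take an arbitrary non-zero $\alpha$-ideal $J$ of $A[w;\gamma]$ and aim to show $J=A[w;\gamma]$. Lemma~\ref{common} supplies a monic element $f=w^m+a_{m-1}w^{m-1}+\cdots+a_0\in J$ whose coefficients satisfy \eqref{eqfwu} and \eqref{eqgwu}, and it suffices to force $m=0$, since then $f=1\in J$. Suppose instead that $m\geq 1$. Reading off \eqref{eqfwu} at $j=m-1$ (with $a_m=1$) gives $a_{m-1}=-mv+\rho\alpha(a_{m-1})$, so that $v=u-\rho\alpha(u)$ for $u:=-m^{-1}a_{m-1}$; here $\chr\F=0$ is what allows me to invert the integer $m$. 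Reading off \eqref{eqgwu} at $j=m-1$ gives $aa_{m-1}=a_{m-1}\gamma^{-1}(a)$ for all $a\in A$, equivalently $ua=\gamma(a)u$. Since $v$ is regular, the particular case of Lemma~\ref{guu} then shows that $u$ is a splitting element, so $(A,\alpha,v,\rho)$ is conformal, contradicting singularity. Hence $m=0$ and $A[w;\gamma]$ is $\alpha$-simple.

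For the converse, assume $A[w;\gamma]$ is $\alpha$-simple. To see that $A$ is $\alpha$-simple I argue contrapositively: given a non-zero proper $\alpha$-ideal $I$ of $A$, the set $I[w;\gamma]$ of polynomials with coefficients in $I$ is a non-zero proper ideal of $A[w;\gamma]$; using $\alpha\left(\sum a_iw^i\right)=\sum\rho^{-i}\alpha(a_i)(w-v)^i$ it is $\alpha$-invariant, and its closure under left multiplication by $w$ (where $wb=\gamma(b)w$) is exactly the point at which the hypothesis that every $\alpha$-ideal of $A$ is a $\gamma$-ideal is used. This contradicts $\alpha$-simplicity. To see that $(A,\alpha,v,\rho)$ is singular I again argue contrapositively: if there were a splitting element $u$, then the Casimir element $z=w-u$ of Definitions~\ref{Casimir} is $\gamma$-normal in $A[z;\gamma]=A[w;\gamma]$ and satisfies $\alpha(z)=\rho^{-1}z$, so $zA[w;\gamma]$ is a non-zero proper $\alpha$-ideal, proper because $z$ has $w$-degree $1$, again contradicting $\alpha$-simplicity.

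The main obstacle is the first direction, and within it the extraction of a splitting element from the second-highest coefficient of $f$; Lemma~\ref{common} is engineered precisely to make this possible, so once it is invoked the remaining computation is the short one above. The two hypotheses beyond $\alpha$-simplicity play clearly separated roles: $\chr\F=0$ is needed only to divide by $m$ and is genuinely necessary, since in characteristic $p$ the relation $a_{m-1}-\rho\alpha(a_{m-1})=-mv$ degenerates when $p\mid m$; and regularity of $v$ is what lets Lemma~\ref{guu} upgrade $ua=\gamma(a)u$ to the full $\gamma$-normality $\gamma(u)=u$ demanded of a splitting element.
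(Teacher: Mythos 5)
Your proof is correct and follows essentially the same route as the paper: the substantive direction extracts the splitting element $u=-a_{m-1}/m$ from Lemma~\ref{common} via \eqref{eqfwu} and \eqref{eqgwu} at $j=m-1$ and invokes Lemma~\ref{guu}, while the converse uses $IA[w;\gamma]$ and the Casimir element $z=w-u$ exactly as in the paper. Your added remarks on where the characteristic-zero and $\gamma$-ideal hypotheses enter are accurate.
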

\begin{proof}
Suppose that $A[w;\gamma]$ is $\alpha$-simple. If $I$ is a non-zero proper $\alpha$-ideal of $A$ then, as
$I$ is also a $\gamma$-ideal of $A$,
$IA[w;\gamma]$ is a non-zero proper ideal of
$A[w;\gamma]$. It is clearly an $\alpha$-ideal so $IA[w;\gamma]=A[w;\gamma]$, whence $I=A$ and $A$
is $\alpha$-simple. If $(A,\alpha,v,\rho)$ is
conformal, with $v=u-\rho\alpha(u)$, then the Casimir element $z=w-u$ is a non-zero normal
non-unit of $A[w;\gamma]$, and $\alpha(z)=\rho^{-1}z$, so
$zA[w;\gamma]$ is a non-zero proper $\alpha$-ideal of
$A[w;\gamma]$, contradicting the $\alpha$-simplicity of $A[w;\gamma]$. Hence $(A,\alpha,v,\rho)$ is singular

Conversely, suppose that $A$ is $\alpha$-simple and that $(A,\alpha,v,\rho)$
is singular.  Let $J$ be a non-zero $\alpha$-ideal of
$A[w;\gamma]$ and let $m$ and $f$ be as in Lemma~\ref{common}. Suppose that $m\neq 0$. By \eqref{eqfwu} with $j=m-1$,
\begin{equation}\label{amminus1}
a_{m-1}-\rho\alpha(a_{m-1})=-mv.
\end{equation}
By \eqref{eqgwu} with $j=m-1$,
 $a_{m-1}a=\gamma(a)a_{m-1}$ for all $a\in A$.
Let $u=-a_{m-1}/m$. Then $ua=\gamma(a)u$ for all $a\in A$ and, by
\eqref{amminus1}, $v=u-\rho\alpha(u)$. By Lemma~\ref{guu}, $u$ is a splitting element, contradicting
the singularity of $(A,\alpha,v,\rho)$. Therefore $m=0$, $1=f\in J$, and $J=A[w;\gamma]$, which
is therefore $\alpha$-simple.
\end{proof}

We are now in a position to give a simplicity criterion for $R$ when $\F$ has characteristic $0$.
\begin{theorem}
\label{r-thm0} Suppose that $\chr \F=0$. The ring $R$ is simple
if and only if
\begin{enumerate}
\item $A$ is $\alpha$-simple;
\item the $4$-tuple $(A,\alpha,v,\rho)$ is singular;
\item for all $m\geq 1$, $v^{(m)}$ is a unit of $A$.
\end{enumerate}
\end{theorem}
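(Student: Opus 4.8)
The plan is to assemble the three lemmas and the corollary that precede the theorem into a single equivalence. The natural strategy is to funnel everything through Lemma~\ref{Rsimlocsim}, which already reduces simplicity of $R$ to two conditions: simplicity of the localization $R_{\mathcal Y}$ together with the requirement that every $v^{(m)}$ be a unit of $A$. Condition (iii) of the theorem is literally the second half of this, so the real work is to show that, in the presence of (iii), simplicity of $R_{\mathcal Y}$ is equivalent to the conjunction of (i) and (ii).

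To do this I would first invoke Corollary~\ref{locsim}, which restates simplicity of $R_{\mathcal Y}$ as the two conditions that $A[w;\gamma]$ is $\alpha$-simple and that $\alpha^m$ is outer on $A[w;\gamma]$ for every $m\geq 1$. The second of these is where Lemma~\ref{innerv0} enters: it tells us that if $\alpha^m$ were inner on $A[w;\gamma]$ then $v^{(m)}=0$, which under hypothesis (iii) is impossible since $v^{(m)}$ is a unit (and $0$ is not a unit in a nonzero ring). Thus, once we know each $v^{(m)}$ is a unit, the outerness condition is automatic and can be discarded. This collapses simplicity of $R_{\mathcal Y}$ down to pure $\alpha$-simplicity of $A[w;\gamma]$.

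The final bridge is Lemma~\ref{lmwsiu}, which I would apply to replace ``$A[w;\gamma]$ is $\alpha$-simple'' with ``$A$ is $\alpha$-simple and $(A,\alpha,v,\rho)$ is singular'', i.e.\ conditions (i) and (ii). To invoke that lemma I must check its two standing hypotheses: that $v$ is regular in $A$, and that every $\alpha$-ideal of $A$ is a $\gamma$-ideal. Here I would use condition (iii) and Remark~\ref{vinner}: since $v^{(m)}$ is a unit, Remark~\ref{vinner} gives $\gamma(a)=v^{(m)}a(v^{(m)})^{-1}$, so every ideal---in particular every $\alpha$-ideal---of $A$ is automatically a $\gamma$-ideal, and moreover $v=v^{(1)}$ is a unit, hence regular. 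So both hypotheses of Lemma~\ref{lmwsiu} are supplied free of charge by condition (iii), which is reassuring: the characteristic-$0$ hypothesis $\chr\F=0$ is needed precisely because Lemma~\ref{lmwsiu} uses it (the factor $m$ appearing in \eqref{amminus1} must be invertible to define the splitting element $u=-a_{m-1}/m$).

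Putting these together, I would argue: $R$ simple $\iff$ $R_{\mathcal Y}$ simple and all $v^{(m)}$ units (Lemma~\ref{Rsimlocsim}); and given the unit condition (iii), $R_{\mathcal Y}$ simple $\iff$ $A[w;\gamma]$ is $\alpha$-simple (Corollary~\ref{locsim} plus Lemma~\ref{innerv0} eliminating the outerness clause) $\iff$ $A$ is $\alpha$-simple and $(A,\alpha,v,\rho)$ is singular (Lemma~\ref{lmwsiu}, whose hypotheses follow from (iii) via Remark~\ref{vinner}). The one point demanding care---and the likely main obstacle in writing this cleanly---is the logical bookkeeping in the reverse direction: I must verify that I am allowed to assume (iii) throughout when deducing the equivalence of $R_{\mathcal Y}$-simplicity with (i) and (ii), rather than treating the three conditions as fully independent. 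The cleanest presentation is probably to prove the forward direction by peeling off (iii) first (via Lemma~\ref{Rsimlocsim}) and then deriving (i) and (ii), and for the converse to establish (iii) is in force so that Remark~\ref{vinner} and Lemma~\ref{lmwsiu} apply, then build simplicity back up through the localization.
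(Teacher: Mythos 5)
Your proposal is correct and follows essentially the same route as the paper's proof: both directions funnel through Lemma~\ref{Rsimlocsim} to isolate condition (iii), then use Remark~\ref{vinner} to supply the hypotheses of Lemma~\ref{lmwsiu} (regularity of $v$ and $\alpha$-ideals being $\gamma$-ideals), with Corollary~\ref{locsim} and Lemma~\ref{innerv0} handling the passage between simplicity of $R_{\mathcal Y}$ and $\alpha$-simplicity of $A[w;\gamma]$. Your point about the logical bookkeeping—securing (iii) first in each direction before invoking the other lemmas—is precisely how the paper organizes its argument.
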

\begin{proof}
Suppose that (i), (ii), (iii) hold.  By (iii) $v=v^{(1)}$ is regular and
Remark~\ref{vinner}, every $\alpha$-ideal is a $\gamma$-ideal so,
by (i), (ii) and Lemma~\ref{lmwsiu}, $A[w;\gamma]$ is
$\alpha$-simple. By (iii) and Lemma~\ref{innerv0}, each $\alpha^m$
is outer on $A[w;\gamma]$. By Lemma~\ref{Rsimlocsim} and
Corollary~\ref{locsim}, $R$ is simple.

Conversely, suppose that $R$ is simple. By Lemma~\ref{Rsimlocsim},
$R_\Y$ is simple and $v^{(m)}$ is a unit for $m\geq 1$. So (iii)
holds and, as above, $v$ is regular and every $\alpha$-ideal is a $\gamma$-ideal. By
Corollary~\ref{locsim}, $A[w;\gamma]$ is $\alpha$-simple and, by
Lemma~\ref{lmwsiu}, (i) and (ii) hold.
\end{proof}

\begin{rmk}
In the conformal case an ambiskew polynomial ring has a normal element, namely the Casimir element, and this
allows for iteration. The next result shows that iteration is also often available in the simple case. Examples of both types of iteration will appear in Section~\ref{Examples}.
\end{rmk}
\begin{theorem}\label{simpleit}
Suppose that $\chr \F=0$ and let $S=R(A,\alpha,v,\rho)$ be a simple ambiskew polynomial ring such that $v$ is an eigenvector for $\alpha$ with eigenvalue $\mu$. Let $\lambda\in \F^*$ and extend the automorphism $\alpha$ to an automorphism of $S$ by setting $\alpha(y)=\lambda y$ and $\alpha(x)=\mu\lambda^{-1} x$. The iterated ambiskew polynomial ring
$R(S,\alpha,v,\rho)$ is simple.
\end{theorem}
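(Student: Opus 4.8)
The plan is to verify the three conditions of Theorem~\ref{r-thm0} for the ambiskew polynomial ring $R(S,\alpha,v,\rho)$. First I would record the data that make this ring well-defined: since $\alpha(v)=\mu v$ and $v$ is $\gamma$-normal in $A$, one checks that $v$ is $\gamma'$-normal in $S$, where $\gamma'$ is the extension of $\gamma$ to $S$ determined by $\gamma'(y)=\mu^{-1}y$ and $\gamma'(x)=\mu x$, and that $\alpha\gamma'=\gamma'\alpha$ on $S$. Thus $R(S,\alpha,v,\rho)$ is a genuine ambiskew polynomial ring over $S$ and Theorem~\ref{r-thm0} is applicable. Condition (i), that $S$ be $\alpha$-simple, is then immediate, since $S$ is simple and so has no non-zero proper ideals at all.

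Next I would extract two consequences of the simplicity of $S$. By Lemma~\ref{Rsimlocsim} the element $v=v^{(1)}$ is a unit of $A$, and by Theorem~\ref{r-thm0}(ii) the $4$-tuple $(A,\alpha,v,\rho)$ is singular. I claim this forces $\rho\mu=1$: otherwise $(1-\rho\mu)^{-1}v$ would be a $\gamma$-normal element of $A$ satisfying $(1-\rho\mu)^{-1}v-\rho\alpha((1-\rho\mu)^{-1}v)=(1-\rho\mu)^{-1}(v-\rho\mu v)=v$, i.e.\ a splitting element, contradicting singularity. With $\rho\mu=1$ in hand, $\alpha^{l}(v)=\mu^{l}v$ gives $v^{(m)}=\sum_{l=0}^{m-1}(\rho\mu)^{l}v=mv$, which is a unit because $\chr\F=0$ and $v$ is a unit; this is condition (iii).

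The crux is condition (ii), the singularity of $(S,\alpha,v,\rho)$. Suppose it fails, so there is a $\gamma'$-normal $u\in S$ with $v=u-\rho\alpha(u)$. As $v$ is a unit, $\gamma'$ is the inner automorphism $s\mapsto vsv^{-1}$, whence $c:=v^{-1}u$ is central in $S$; using $\alpha(v)=\mu v$ and $\rho\mu=1$ the splitting relation rearranges to $\alpha(c)=c-1$. Here I would exploit the $\Z$-grading of $S$ with $\deg x=1$, $\deg y=-1$, $\deg A=0$, for which $\alpha$ is a graded automorphism and $S_{0}=A[w;\gamma]$; since $Z(S)$ is a graded subring, its degree-$0$ component $c_{0}\in Z(S)\cap A[w;\gamma]$ again satisfies $\alpha(c_{0})=c_{0}-1$. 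Commuting with $y$ forces $c_{0}$ to be fixed by the automorphism of $A[w;\gamma]$ of Notation~\ref{w} sending $w\mapsto\rho^{-1}(w-v)$, whereas the extended $\alpha$ sends $w\mapsto\mu w=\rho^{-1}w$; these two automorphisms agree on $A$ and agree to leading order in $w$, so $c_{0}-\alpha(c_{0})$ has strictly smaller $w$-degree than $c_{0}$ unless $c_{0}\in A$. Comparing with $c_{0}-\alpha(c_{0})=1$ then forces $\deg_{w}c_{0}\le 1$. The case $\deg_{w}c_{0}=0$ is immediately contradictory (it gives $\alpha(c_{0})=c_{0}$), while $\deg_{w}c_{0}=1$ forces $c_{0}=-v^{-1}w+b_{0}$ with $b_{0}\in Z(A)$, $\gamma(b_{0})=b_{0}$ and $b_{0}-\alpha(b_{0})=1$, so that $b_{0}v$ is a splitting element of $(A,\alpha,v,\rho)$, contradicting its singularity. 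Hence $(S,\alpha,v,\rho)$ is singular.

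With (i), (ii) and (iii) established, Theorem~\ref{r-thm0} yields that $R(S,\alpha,v,\rho)$ is simple. I expect the main obstacle to be condition (ii): the substance lies in controlling the centre $Z(S)$, and the two devices that make it tractable are the inner-ness of $\gamma'$, which converts $\gamma'$-normal elements into central ones, and the leading-order agreement of the extended $\alpha$ with the internal automorphism $w\mapsto\rho^{-1}(w-v)$ of $A[w;\gamma]$, which via the $\Z$-grading reduces the whole question to the already-established singularity of the base tuple $(A,\alpha,v,\rho)$.
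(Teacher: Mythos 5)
Your proposal is correct, and it shares the paper's overall skeleton (verify conditions (i)--(iii) of Theorem~\ref{r-thm0} for $R(S,\alpha,v,\rho)$, with (i) trivial from simplicity of $S$), but the decisive step --- singularity of $(S,\alpha,v,\rho)$ --- is handled by a genuinely different argument. The paper's proof of that step is a one-line projection: writing a putative splitting element as $u=\sum u_{i,j}y^ix^j$ in the free $A$-module decomposition of $S$, the defining equations of a splitting element hold componentwise, because $\alpha$, the extended $\gamma$, and left and right multiplication by elements of $A$ each preserve the summands $Ay^ix^j$ up to twists and scalars; hence $u_{0,0}$ is a splitting element for $(A,\alpha,v,\rho)$, contradicting the singularity supplied by Theorem~\ref{r-thm0} applied to the simple ring $S$. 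You instead first force $\rho\mu=1$ (a correct observation, not made explicit in the paper), use the unit $v$ to make $\gamma'$ inner and so replace the splitting element by a central element $c$ with $\alpha(c)=c-1$, and then pin down the degree-zero component $c_0\in Z(S)\cap A[w;\gamma]$ by playing the internal extension $w\mapsto\rho^{-1}(w-v)$ of $\alpha$ against the external one $w\mapsto\rho^{-1}w$; the leading-coefficient computation (valid since $\chr\F=0$ and $v$ is a unit) gives $\deg_w c_0=1$ and yields the splitting element $b_0v$ of the base, the same contradiction. Likewise for condition (iii) the paper simply notes that $v^{(m)}$ is literally unchanged, while you route it through $v^{(m)}=mv$. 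What the paper's argument buys is brevity and economy of hypotheses: it needs neither $\rho\mu=1$ nor the invertibility of $v$ at that point, only the $A$-module decomposition of $S$. What yours buys is structural information: the forced relation $\alpha(v)=\rho^{-1}v$, the innerness of $\gamma'$, and an explicit description of what a splitting element of the iterated ring would have to be. The auxiliary facts you invoke (regularity of $y$ in $S$, gradedness of $Z(S)$, and $S_0=A[w;\gamma]$) are all true and standard, so there is no gap.
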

\begin{proof}
Using the relations $ya=\alpha(a)y$, $xa=\beta(a)x$ and
$xy-\rho yx=v$ it is routine to check that the specified extension of $\alpha$ to $S$ is indeed an automorphism,
 that $\gamma$ and $\beta$ also extend to $S$ with $\gamma(y)=\mu^{-1}y$, $\gamma(x)=\mu x$, $\beta(y)=(\lambda\mu)^{-1}y$ and $\beta(x)=\lambda x$, and that $v$ is $\gamma$-normal in $S$.  As $R$ is simple, it is $\alpha$-simple and, as $\rho$ and $v$ are unchanged, $v^{(m)}$ is
a unit for all $m\geq 1$. Finally if $R$ has a splitting element $u=\sum u_{i,j}y^ix^j$ then $u_{0,0}$ is a splitting element in $A$ and no such element exists. By Theorem~\ref{r-thm0}, $R(S,\alpha,v,\rho)$ is simple.
\end{proof}

\begin{rmk}
The situation in finite characteristic $p$ is more complex than in Theorem~\ref{r-thm0}.
We shall need to handle binomial coefficients modulo $p$ and this will be done using Lucas' Congruence \cite[p 271]{dickson} which has the consequence that if $n\geq r\in \N$ have $p$-ary representations
\[n=n_sp^s+n_{s-1}p^{s-1}+\ldots+n_1p+n_0\text{ and }r=r_sp^s+r_{s-1}p^{s-1}+\ldots+r_1p+r_0\]
where $0\leq n_i,r_i\leq p-1$ for  $s\geq i\geq 0$, then
\[{n\choose r}\not\equiv0\bmod p\Leftrightarrow n_i\geq r_i\text{ for all }i.\]

Singularity does not appear explicitly in the following analogue
of Lemma~\ref{lmwsiu} but is covered by condition (ii), with
$n=0$, in which case (a) is vacuous and (b) states that $u$ is
$\gamma$-normal and $v=\rho\alpha(u)-u$.
\end{rmk}

\begin{lemma}
\label{lmwsiup} Suppose that $\chr \F=p\neq 0$, that $v$ is regular in $A$ and that every
$\alpha$-ideal of $A$ is a $\gamma$-ideal.
  Then $A[w;\gamma]$ is $\alpha$-simple if and only if
  \begin{enumerate}
 \item
 $A$ is $\alpha$-simple;
 \item
 there do not exist a non-negative integer $n$ and elements
$u$ and $b_i$, $0\leq i\leq n-1$, of $A$
  such that \begin{enumerate}
  \item
 for $0\leq i\leq n-1$, $\gamma(b_i)=b_i$, $b_i$ is
 $\gamma^{p^n-p^i}$-normal and $\alpha(b_i)=\rho^{p^i-p^n}b_i$;
 \item
$\gamma(u)=u$, $u$ is $\gamma^{p^n}$-normal and
$\rho^{p^n}\alpha(u)-u=v^{p^n}+\sum_{0}^{n-1}b_iv^{p^i}$.
\end{enumerate}
\end{enumerate}
\end{lemma}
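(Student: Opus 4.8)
My plan is to follow the structure of Lemma~\ref{lmwsiu}, the essential new feature being that in characteristic $p$ the obstruction to $\alpha$-simplicity of $A[w;\gamma]$ is not a degree-one Casimir element but an \emph{additive} normal $\alpha$-eigenvector $f=w^{p^{n}}+\sum_{i=0}^{n-1}b_{i}w^{p^{i}}+u$ whose degree is a power of $p$. The computational engine throughout is the Frobenius identity $\alpha(w^{p^{i}})=\rho^{-p^{i}}(w^{p^{i}}-v^{p^{i}})$, obtained by raising $\alpha(w)=\rho^{-1}(w-v)$ to the power $p^{i}$, which is legitimate because $w$ and $v$ commute and $\chr\F=p$. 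Condition~(i) is handled exactly as in Lemma~\ref{lmwsiu}: a non-zero proper $\alpha$-ideal $I$ of $A$ is a $\gamma$-ideal by hypothesis, so $IA[w;\gamma]$ is a non-zero proper $\alpha$-ideal, while conversely $\alpha$-simplicity of $A[w;\gamma]$ forces that of $A$.

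For the implication that $\alpha$-simplicity fails when the data in~(ii) exist, I would take $(n,u,b_{0},\dots,b_{n-1})$ and simply write down $f=w^{p^{n}}+\sum_{i=0}^{n-1}b_{i}w^{p^{i}}+u$. Using the Frobenius identity together with the eigenvalue clause $\alpha(b_{i})=\rho^{p^{i}-p^{n}}b_{i}$ of~(a), a coefficient-by-coefficient comparison gives $\alpha(f)=\rho^{-p^{n}}f$, the constant terms matching precisely the relation $\rho^{p^{n}}\alpha(u)-u=v^{p^{n}}+\sum b_{i}v^{p^{i}}$ of~(b). The normality clauses of (a) and (b) yield $af=f\gamma^{-p^{n}}(a)$ for all $a\in A$, and the $\gamma$-invariance clauses give $\gamma(f)=f$, so $f$ commutes with $w$. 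Thus $f$ is a normal $\alpha$-eigenvector; being monic of degree $p^{n}\ge 1$ it is a non-unit, so $fA[w;\gamma]=A[w;\gamma]f$ is a non-zero proper $\alpha$-ideal, contradicting $\alpha$-simplicity.

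For the converse I would assume $A$ is $\alpha$-simple and $A[w;\gamma]$ is not, fix a non-zero proper $\alpha$-ideal $J$, and apply Lemma~\ref{common} to obtain the minimal degree $m$ and a monic $f=\sum_{j}a_{j}w^{j}\in J$ satisfying \eqref{eqfwu} and \eqref{eqgwu}; properness forces $m\ge 1$. By \eqref{eqgwu} each $a_{j}$ with $j<m$ is $\gamma^{m-j}$-normal with $m-j\ge 1$, so the regularity of $v$ and Lemma~\ref{guu} give $\gamma(a_{j})=a_{j}$ for all $j$; hence every coefficient commutes with $w$ and with $v$. The crux is to show $f$ is \emph{additive}, i.e.\ $a_{j}=0$ unless $j=0$ or $j$ is a power of $p$; granting this, the leading term forces $m=p^{n}$, and I set $b_{i}:=a_{p^{i}}$ and $u:=a_{0}$. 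Condition~(a) then falls out of \eqref{eqfwu} at $j=p^{i}$: Lucas' Congruence gives $\binom{p^{k}}{p^{i}}\equiv 0\bmod p$ for $k>i$, collapsing the relation to $\alpha(a_{p^{i}})=\rho^{p^{i}-p^{n}}a_{p^{i}}$, while \eqref{eqgwu} and Lemma~\ref{guu} supply the normality and $\gamma$-invariance. Condition~(b) is exactly \eqref{eqfwu} at $j=0$, which, once (a) is substituted and the sign identity $(-v)^{p^{i}}=-v^{p^{i}}$ is used, telescopes to $\rho^{p^{n}}\alpha(u)-u=v^{p^{n}}+\sum b_{i}v^{p^{i}}$.

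The main obstacle is the additivity claim, which reduces to showing that the minimal degree $m$ is a power of $p$: once $m=p^{n}$, a clean downward induction finishes the job, since at every non-$p$-power index $j$ the inductive hypothesis removes the intermediate terms of \eqref{eqfwu}, Lucas' Congruence annihilates each surviving $\binom{p^{k}}{j}$ \emph{and} the source term $\binom{p^{n}}{j}(-v)^{p^{n}-j}$ (because $\binom{p^{n}}{j}\equiv 0$ for $0<j<p^{n}$), leaving $a_{j}=\rho^{m-j}\alpha(a_{j})$; then $a_{j}$ is a normal $\alpha$-eigenvector and, by the $\alpha$-simplicity of $A$, is $0$ or a unit. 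The two genuinely delicate points, which I expect to absorb most of the work, are therefore: (1) proving $m=p^{n}$, where the difficulty is that the non-trivial $p$-ary digits of a non-$p$-power $m$ keep the source terms $\binom{m}{j}(-v)^{m-j}$ alive and block the eigenvector conclusion; and (2) excluding the unit alternative for $a_{j}$. For (2) a promising device is that a unit coefficient $c$ realises $\gamma^{m-j}$ as an inner automorphism, after which the substitution $\widetilde w:=c^{-1}w$ makes $\widetilde w$ commute with $A$ and turns the $\alpha$-action into the pure translation $\widetilde w\mapsto\widetilde w-c^{-1}v$, reducing that branch to the additive-polynomial structure of a translation (forcing degrees to be powers of $p$) or else contradicting the minimality of $m$. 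The modular bookkeeping in these Lucas arguments, especially the case $p=2$, is where the technical care concentrates.
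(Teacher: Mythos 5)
Your forward implication (the element $h=w^{p^n}+\sum b_iw^{p^i}+u$ and the Frobenius computation) and your setup of the converse via Lemma~\ref{common} and Lemma~\ref{guu} agree with the paper, and your extraction of (a) and (b) from \eqref{eqfwu} at $j=p^i$ and $j=0$ is correct \emph{granting} additivity. But the two ``delicate points'' you defer are not technical residue: they are the entire content of the lemma, and the first, as you have formulated it, is not a viable intermediate target. The claim that the minimal degree $m$ of a nonzero proper $\alpha$-ideal is a power of $p$ cannot follow from (i) alone: take $A=\F$, $\rho=v=1$, $\gamma=\id$, so that $\alpha(w)=w-1$; the ideal of $\F[w]$ generated by $(w^p-w)^2$ is a proper nonzero $\alpha$-ideal whose minimal degree is $2p$, not a power of $p$ when $p>2$ (and whose minimal element is not additive, having a $w^{p+1}$ term). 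So any proof of ``$m=p^n$'' must invoke (ii), and in fact the paper never proves it at all: it shows only that $p\mid m$ (a step absent from your outline, done via the splitting element $-a_{m-1}/m$ and (ii) with $n=0$), writes $m=p^nr$ with $r$ possibly greater than $1$, and derives the contradiction to (ii) from the top $p^n+1$ coefficients alone. The mechanism is exactly the phenomenon you describe as ``blocking'' your argument: when $r>1$, Lucas' Congruence gives $\binom{m}{p^n}\not\equiv 0\bmod p$, so the source term in \eqref{eqfwu} at $j=m-p^n$ survives, and instead of an eigenvector relation one obtains, after normalizing by $N=\binom{m}{p^n}$, the relation $\rho^{p^n}\alpha(u)-u=v^{p^n}+\sum b_iv^{p^i}$ with $u$ and $b_i$ built from $a_{m-p^n}$ and $a_{m-p^n+p^i}$ --- precisely the data forbidden by (ii). The surviving binomial terms are to be harvested, not killed; a proof of your step (1) would in effect reprove the whole lemma inside itself.

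Your point (2) is misdirected for the same reason. In the paper's argument unit coefficients are not an obstruction to be excluded but the engine of the proof: $\alpha$-simplicity of $A$ together with \eqref{eqgwu} shows the relevant coefficient $a_{m-l}$ is a unit, and the splitting-type element contradicting (ii) (with $n=t$ and all $b_i=0$) is $u=M^{-1}a_{m-l}^{-1}a_j$. Moreover your proposed substitution fails on its own terms: if $c=a_j$ is a unit and $\widetilde{w}=c^{-1}w$, then $\widetilde{w}a=c^{-1}\gamma(a)c\,\widetilde{w}$, so $\widetilde{w}$ is a skew variable for the automorphism $c^{-1}\gamma(\cdot)c$; it commutes with $A$ only if $\gamma$ itself is inner via $c$, whereas what the normality of $a_j$ gives you is that $c$ realizes the different automorphism $\gamma^{m-j}$ as inner. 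Finally, even with the correct target, substantial combinatorics remains that your outline does not supply: the paper's claim about which top coefficients can be nonzero is proved by choosing a counterexample $l=p^n-up^t$ optimally (minimal $t$, then maximal $u$) and running two further Lucas arguments, at $j=m-l$ and at $j=m-l-p^t$. As it stands, the converse direction of your proposal has a genuine gap at its core.
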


\begin{proof}
Suppose that $A[w;\gamma]$ is simple. As in the proof of Lemma \ref{lmwsiu}, $A$ is $\alpha$-simple.  Suppose that (ii) does not hold,
and let $h:=w^{p^n}+(\sum_{i=0}^{n-1} b_iw^{p^i})+u$ where the elements $u$ and
$b_i$, $0\leq i\leq n-1$, have the properties specified in (a) and (b). Routine calculations
show that $ha=\gamma^{p^n}(a)h$ for all $a\in A$, that $hw=wh$ and
that $\rho^{p^n}\alpha(h)=h$. Therefore $hA[w;\gamma]$ is a
non-zero proper $\alpha$-ideal of $A[w;\gamma]$, contradicting the
$\alpha$-simplicity of $A[w;\gamma]$. Thus (ii) holds.

For the converse, suppose that (i) and (ii) hold. Let $J$ be a non-zero $\alpha$-ideal of
$A[w;\gamma]$ and let $m$ and $f$ be as in Lemma~\ref{common}. Suppose that $m\neq 0$. Let $J$ be a non-zero $\alpha$-ideal of $A[w;\gamma]$ and
proceed as in the proof of Lemma~\ref{lmwsiu} to obtain a non-zero element $f=w^m+a_{m-1}w^{m-1}+\ldots+a_0$ of $J$ with minimal degree, to establish \eqref{eqfwu} and \eqref{eqgwu} and to show that
 $f-\rho^m\alpha(f)=0$ and that $af-f\gamma^{-m}a=0$ for all $a\in A$.
By \eqref{eqgwu} and Lemma~\ref{guu}, $\gamma(a_i)=a_i$ for all $i$.
As in the proof of Lemma~\ref{lmwsiu}, applying \eqref{eqfwu} and \eqref{eqgwu} with $j=m-1$ shows that $a_{m-1}-\rho\alpha(a_{m-1})=-mv$ and $a_{m-1}a=\gamma(a)a_{m-1}$ for all $a\in A$.
If $p$ does not divide $m$ then $-a_{m-1}/m$ is a splitting element, contradicting (ii) with $n=0$.
Hence $p$ divides $m$.

Express $m$ in the form $p^nr$, where $p$ does not divide $r$ and let $q=m-p^n$.
By \eqref{eqfwu} with $j=q$,
\begin{equation}
a_{q}=\sum_{i=0}^{p^n} {{m-i}\choose
q}\rho^{i}\alpha(a_{m-i})(-v)^{p^n-i}.\end{equation}
We claim that, for $0\leq i\leq p^n$, if $a_{m-i}\neq 0$ then $i=p^n$ or $i=p^n-p^t$ for some $t$
with $1\leq t\leq n$ and that $\alpha(a_{m-p^n+p^t})=\rho^{-p^t}a_{m-p^n+p^t}$ for $1\leq t\leq n$.
To see that the result will follow from this claim, suppose that this claim has been established.
By \eqref{eqfwu},
\begin{equation}\label{tempform}
a_{q}=\left(\sum_{t=1}^n {{m-p^n+p^t}\choose
m-p^n}a_{m-p^n+p^t}(-v)^{p^t}\right)+\rho^{p^n}\alpha(a_{m-p^n}).
\end{equation}
By Lucas' Congruence, $N:={m\choose{m-p^n}}={m\choose{p^n}}\not\equiv0\bmod p$.
Let $u=N^{-1}a_{m-p^n}$ and, for $0\leq i\leq n-1$,
let $b_i=N^{-1}{{m-p^n+p^i}\choose {m-p^n}}a_{m-p^n+p^i}$. For $0\leq i\leq
n-1$, $\gamma(b_i)=b_i$, $b_i$ is $\gamma^{p^n-p^i}$-normal and $\alpha(b_i)=\rho^{p^i-p^n}b_i$. Also
$\gamma(u)=u$, $u$ is $\gamma^{p^n}$-normal and, by
\eqref{tempform},
\begin{equation}
\rho^{p^n}\alpha(u)-u=v^{p^n}+ \left(\sum_{i=0}^{n-1}
b_iv^{p^i}\right).
\end{equation}
This contradicts (ii) so $m=0$ and $J=A[w;\gamma]$, which is
therefore $\alpha$-simple. It remains to establish the claim.

Suppose that the claim is false.
It holds, vacuously, when $i=p^n$ and, as $a_m=1$, it holds when $i=0$.
So there exists $i$, with $0<i<p^n$, such that
$a_{m-i}\neq 0$ and either $p^n-i\neq p^t$ for all $t$ with $1\leq t<n$ or
$i=p^n-p^t$ for some $t$ with $1\leq t<n$ and
$\alpha(a_{m-i})\neq\rho^{-i}a_{m-i}$.
For each $i$ with $0<i<p^n$, write $p^n-i=u_ip^{t_i}$, where $p$ does not divide $u_i$ and $0\leq t_i<n$. Thus if $i$ is a counterexample to the claim then either
$u_i>1$ or $u_i=1$, $i=p^n-p^{t_i}$ and $\alpha(a_{m-i})\neq\rho^{-i}a_{m-i}$. Let $t$ be the minimal
value of $t_i$ taken over all counterexamples and, among counterexamples with $t_i=t$, let $u$ be the maximal value of
$u_i$. Let $l=p^n-up^t$ be the counterexample chosen optimally in this way.

Let $0<k<l$. We shall show that ${{m-k}\choose{m-l}}a_{m-k}=0$. There are three cases.
First suppose that $t_k<t$. Then $u_k>1$ otherwise
\[k=p^n-p^{t_k}>p^n-up^t=l\]
so, by the minimality of $t$, $a_{m-k}=0$.
Next suppose that
$t_k=t$. In this case,  $u_k>u$
otherwise
\[k=p^n-u_kp^t\geq p^n-up^t=l.\]
By the maximality of $u$, $a_{m-k}=0$.
Finally, suppose that $t_k>t$. In the $p$-ary representations of  $m-k=(r-1)p^n+u_kp^{t_k}$ and $m-l=(r-1)p^n+up^{t}$, the coefficient of $p^t$ is $0$ for $m-k$ and non-zero for $m-l$. It follows from Lucas' Congruence  that
\[{{m-k}\choose{m-l}}\equiv
0\bmod p.\] This is also true if $k=0$. Combining the three cases, ${{m-k}\choose{m-l}}a_{m-k}=0={{m-k}\choose{m-l}}\alpha(a_{m-k})$ for $0\leq k<l$.
By \eqref{eqfwu}, with $j=m-l$, it follows that
$\alpha(a_{m-l})=\rho^{-l}a_{m-l}$.
As $l$ is a counterexample to
the claim and $\alpha(a_{m-l})=\rho^{-l}a_{m-l}$, it must be the case that $u=u_l>1$.

Now let $s=l+p^t$ and let $j=m-s=m-p^n+(u-1)p^t$. Let $0\leq i\leq s$ be such that the $i$th term on the right hand side of \eqref{eqfwu}, is non-zero. Thus $a_{m-i}\neq 0$ and ${{m-i}\choose{j}}\not\equiv 0\bmod p$. We shall see that $i=l$ or $i=s$.
By Lucas' Congruence,
\[{{m-i}\choose{j}}={{rp^n}\choose{(r-1)p^n+(u-1)p^t}}\equiv
0\bmod p\] so we can assume that $i>0$.
Now suppose that
$t_i>t$. Then, by Lucas' Congruence,
\[{{m-i}\choose{j}}={{(r-1)p^n+u_ip^{t_i}}\choose{(r-1)p^n+(u-1)p^{t}}}\equiv
0\bmod p.\] So $t_i\leq t$.
Suppose that $t_i<t$. As $a_{m-i}\neq 0$ it follows, by minimality of $t$,  that $u_i=1$ and hence  \[i=p^n-p^{t_i}>p^n-p^{t}\geq p^n-(u-1)p^{t}=p^-(p^n-l-p^t)=s,\]
which is false. So $t_i=t$.
As $a_{m-i}\neq0$ either $i=l$ or $u_i=1$ in which
case, as above, $i=p^n-p^{t}\geq s$,
or  $i=p^n-u_ip^t$, where $u_i\leq u$ by the maximality of $u$. In the final possibility,
$i=p^n-u_ip^t\geq p^n-(u-1)p^t=l+p^t=s$.   Thus $i=l$ or $i=s$.
   By \eqref{eqfwu}, we now know that
\begin{equation}
\label{secondaj}
a_{j}={{m-l}\choose{m-j}}\rho^{l}\alpha(a_{m-l})(-v)^{p^t}+\rho^{m-j}\alpha(a_{j}).
\end{equation}
Now $a_{m-l}$ is a unit, by $\alpha$-simplicity and \eqref{eqgwu}, and $\alpha(a_{m-l})=\rho^{-l}a_{m-l}$ so
$\alpha(a_{m-l}^{-1})=\rho^{l}a_{m-l}^{-1}$. Also
$M:={{m-l}\choose{m-j}}={{p^nr-p^n+up^t}\choose{p^t}}\not\equiv
0\bmod p$ by Lucas' Congruence. Multiplying throughout \eqref{secondaj} on the left by
$M^{-1}a_{m-l}^{-1}$,
gives $v^{p^t}=u-\rho^{p^t}\alpha(u)$, where
$u=M^{-1}a_{m-l}^{-1}a_{j}$, so that $\rho^{p^t}\alpha(u)=M^{-1}\rho^{m-j}a_{m-l}^{-1}\alpha(a_{j})$. In view of \eqref{eqgwu}, and as
$\gamma(a_i)=a_i$ for all $i$, $u$ is $\gamma^{p^t}$-normal.
Setting $n=t$ and $b_i=0$ for $0\leq i\leq n-1$, this contradicts
(ii) so the claim is established and the result follows.
\end{proof}

\begin{theorem}
\label{r-thmp} If $\chr \F=p\neq 0$, then
$R(A,\alpha,v,\rho)$ is simple if and only if
  \begin{enumerate}
 \item
 $A$ is $\alpha$-simple;
 \item
 there do not exist a non-negative integer $n$ and elements
$b_i$, $n-1\geq i\geq 0$, and $u$ of $A$
  such that
  \begin{enumerate}
  \item
 for $0\leq i\leq n-1$, $\gamma(b_i)=b_i$, $b_i$ is
 $\gamma^{p^n-p^i}$-normal and $\alpha(b_i)=\rho^{p^i-p^n}b_i$;
 \item
$\gamma(u)=u$, $u$ is $\gamma^{p^n}$-normal and
$\rho^{p^n}\alpha(u)-u=v^{p^n}+\sum_{0}^{n-1}b_iv^{p^i}$;
\end{enumerate}
\item
for all $m\geq 1$, $v^{(m)}$ is a unit of $A$.\end{enumerate}
\end{theorem}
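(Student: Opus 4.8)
The plan is to run exactly the argument used for Theorem~\ref{r-thm0}, substituting the characteristic-$p$ structure result Lemma~\ref{lmwsiup} for its characteristic-$0$ counterpart Lemma~\ref{lmwsiu}. All the genuinely hard work has already been absorbed into Lemma~\ref{lmwsiup}, whose proof is where the combinatorial delicacy (Lucas' Congruence) lives; the theorem itself should then drop out as a short chain of implications. The two bridging results linking $R$ to the skew Laurent ring $R_\Y$ and to the subring $A[w;\gamma]$---namely Lemma~\ref{Rsimlocsim} and Corollary~\ref{locsim}---are characteristic-free, so they carry over without change. Observe that condition (ii) here is verbatim condition (ii) of Lemma~\ref{lmwsiup}, and that its $n=0$ instance records exactly the singularity hypothesis that appeared explicitly in Theorem~\ref{r-thm0}.

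For the forward implication I would assume (i), (ii) and (iii). From (iii) applied with $m=1$, the element $v=v^{(1)}$ is a unit and in particular regular, so Remark~\ref{vinner} guarantees that every ideal of $A$, and hence every $\alpha$-ideal, is a $\gamma$-ideal. This verifies the standing hypotheses of Lemma~\ref{lmwsiup}, so (i) and (ii) let me conclude that $A[w;\gamma]$ is $\alpha$-simple. Next, (iii) together with the contrapositive of Lemma~\ref{innerv0} shows that $\alpha^m$ is outer on $A[w;\gamma]$ for every $m\geq 1$, since an inner $\alpha^m$ would force $v^{(m)}=0$, contradicting that $v^{(m)}$ is a unit. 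Corollary~\ref{locsim} then yields that $R_\Y$ is simple, and since each $v^{(m)}$ is a unit, Lemma~\ref{Rsimlocsim} delivers the simplicity of $R$.

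The reverse implication reverses this chain. Assuming $R$ simple, Lemma~\ref{Rsimlocsim} gives at once that $R_\Y$ is simple and that every $v^{(m)}$ is a unit, which is (iii). As in the forward direction, regularity of $v$ and Remark~\ref{vinner} ensure every $\alpha$-ideal is a $\gamma$-ideal, so the hypotheses of Lemma~\ref{lmwsiup} hold. Corollary~\ref{locsim} upgrades the simplicity of $R_\Y$ to $\alpha$-simplicity of $A[w;\gamma]$, and Lemma~\ref{lmwsiup} then translates this into conditions (i) and (ii).

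Since none of these steps requires any new computation beyond invoking established results, there is no real obstacle at the level of the theorem; the only point demanding a moment's care is checking that the regularity and $\gamma$-ideal hypotheses needed to apply Lemma~\ref{lmwsiup} are genuinely supplied by (iii) in both directions, which is exactly the role played by Remark~\ref{vinner}. The substance of the characteristic-$p$ phenomenon is entirely encoded in condition (ii) and was dealt with inside Lemma~\ref{lmwsiup}.
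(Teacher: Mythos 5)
Your proposal is correct and follows essentially the same route as the paper: the paper's own proof of Theorem~\ref{r-thmp} consists of the single remark that it is the proof of Theorem~\ref{r-thm0} with Lemma~\ref{lmwsiup} substituted for Lemma~\ref{lmwsiu}, and the chain you spell out (Remark~\ref{vinner} supplying the $\gamma$-ideal hypothesis, Lemma~\ref{lmwsiup} handling $\alpha$-simplicity of $A[w;\gamma]$, Lemma~\ref{innerv0} for outerness, then Corollary~\ref{locsim} and Lemma~\ref{Rsimlocsim}) is exactly that argument made explicit. Your verification that condition (iii) supplies the regularity of $v$ and the $\gamma$-ideal hypothesis in both directions is precisely the point of care the paper relies on.
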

\begin{proof}
The proof is similar to that of Theorem~\ref{r-thm0}, with
Lemma~\ref{lmwsiup} replacing Lemma~\ref{lmwsiu}.
\end{proof}

\section{Simple localizations}\label{simpleS}
In this section, we suppose that the $4$-tuple
$(A,\alpha,v,\rho)$ is conformal with splitting element $u$. Thus $R$ is not simple because the Casimir
element $z=xy-u$ is normal in $R$ and generates a proper non-zero
ideal. This obstruction to simplicity can be removed by localizing
at $\mathcal{Z}:=\{z^i\}_{i\geq 0}$ or by factoring out $zR$. In this
section we shall give simplicity criteria for the localization
$S:=R_\mathcal{Z}$ and, as
with $R$, approach the problem through the localization $S_\Y$. As
$zy=\rho yz$, $\Y$ remains a right and left Ore set in $S$ and $S_\Y$ can be
identified with $A[z^{\pm 1};\gamma][y^{\pm 1};\alpha]$, where
$\alpha(z)=\rho^{-1}z$.
\begin{lemma}
\label{Ssimlocsim} The ring $S$ is simple if and only if $S_\Y$ is
simple and, for all $m\geq 1$, there exists $n\geq 0$ such that $u^n\in
v^{(m)}A$.
\end{lemma}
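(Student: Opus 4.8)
The plan is to mirror the proof of Lemma~\ref{Rsimlocsim}, with the invertible Casimir element $z$ playing the role previously forced on the units $v^{(m)}$. For the forward implication, simplicity of $S_\Y$ is immediate: it is a localization of the simple ring $S$ at the Ore set $\Y$, hence simple by \cite[2.1.16(iii)]{McCR} and its left analogue.

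I would prove the divisibility condition contrapositively. Suppose that for some $m\geq 1$ no $n$ satisfies $u^n\in v^{(m)}A$; taking $n=0$ already shows $v^{(m)}$ is not a unit, so the construction in the proof of Lemma~\ref{Rsimlocsim} furnishes a proper right ideal $J$ of $R$, spanned by those $x^iay^j$ with $i>0$ or $j\geq m$ or $a\in v^{(m)}A$, satisfying $Ry^m\subseteq J$ and $1\notin J$. Then $K:=\ann_R(R/J)$ is a non-zero (since $y^m\in K$) proper (since $K\subseteq J$) ideal of $R$. The key point is that, because $z=xy-u$ and $z$ commutes with $u$, expanding $z^n=(xy-u)^n$ binomially gives $z^n-(-u)^n\in J$, every remaining term carrying a positive power of $x$; as $J\cap A=v^{(m)}A$, this yields $z^n\in J\Leftrightarrow u^n\in v^{(m)}A$. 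By hypothesis no power of $z$ lies in $J$, hence none lies in $K$, so $K$ meets no power of $z$. A short localization computation then shows that $1\in KS$ would force some $z^j\in K$; therefore $KS$ is proper, and non-zero because $z$ is regular and so $R$ embeds in $S$. This contradicts the simplicity of $S$.

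For the converse I would assume $S_\Y$ simple and the divisibility condition, take a non-zero ideal $I$ of $S$, and apply Lemma~\ref{yIy} (with $y$ regular and $\Y$ a right and left Ore set in $S$) to obtain $y^d\in I$; choose $d$ minimal and suppose $d\geq 1$. By \eqref{skewcomm} and $y^d\in I$ we get $v^{(d)}y^{d-1}\in I$, and right-multiplying by $\alpha^{-(d-1)}(a)$, where the divisibility condition provides $n$ and $a\in A$ with $u^n=v^{(d)}a$, yields $u^ny^{d-1}\in I$ (using $y^{d-1}a'=\alpha^{d-1}(a')y^{d-1}$). Separately, $xy^d=(z+u)y^{d-1}\in I$ forces $zy^{d-1}\equiv -uy^{d-1}\pmod I$, and since $z$ and $u$ commute an easy induction gives $z^ny^{d-1}\equiv(-1)^nu^ny^{d-1}\pmod I$; combined with $u^ny^{d-1}\in I$ this shows $z^ny^{d-1}\in I$. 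As $z$ is a unit of $S$, left multiplication by $z^{-n}$ gives $y^{d-1}\in I$, contradicting minimality. Hence $d=0$, $1\in I$, and $S$ is simple.

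The hard part is the forward direction: manufacturing a proper ideal of the localization. Its crux is the membership $z^n-(-u)^n\in J$, which is exactly what translates the ideal-theoretic condition ``$u^n\in v^{(m)}A$ for some $n$'' into the statement that the core $K$ avoids the multiplicative set $\{z^n:n\geq 0\}$ --- precisely the requirement for $KS$ to remain proper under localization at $\mathcal{Z}$.
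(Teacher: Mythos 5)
Your proposal is correct and is essentially the paper's own argument: the forward direction uses the same right ideal $J$ from Lemma~\ref{Rsimlocsim} together with $\ann_R(R/J)$ and the same expansion of $(xy-u)^n$ (legitimately binomial, since $xy=z+u$ commutes with $u$) to translate membership of powers of $z$ in $J$ into $u^n\in v^{(m)}A$, and your converse is the paper's descent argument using $zu=uz$ and the invertibility of $z$ in $S$, with your one-shot congruence $z^ny^{d-1}\equiv(-1)^nu^ny^{d-1}\pmod I$ replacing the paper's step-by-step induction on the minimal $t$ with $u^ty^{m-1}\in I$. The only point to tighten is the localization step in the forward direction: $K\cdot S$ is a priori only a \emph{right} ideal of $S$ (conjugation by the normal element $z$ need not preserve $K$), so a proper $KS$ does not yet contradict simplicity; you should instead work with the two-sided ideal of $S$ generated by $K$, whose elements are sums of terms $z^{-a}kz^{-b}$, or simply apply Lemma~\ref{yIy} with $z$ in place of $y$ --- which is exactly how the paper concludes $z^n\in\ann_R(R/J)$ directly from the simplicity of $S=R_{\mathcal Z}$.
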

\begin{proof}
Suppose that $S$ is simple. Then $S_\Y$ is simple by \cite[Proposition 10.17(a)]{GW}. Let $m\geq 1$.
As in the proof of
Lemma~\ref{Rsimlocsim}, let $J$ be the $\F$-subspace of $R$ spanned by the elements of the form
$x^iay^j$ where $i>0$ or $j\geq m$ or $a\in v^{(m)}A$. Then
$J$ is a
right ideal of $R$ and $I:=\ann_{R}(R/J)$ is
a non-zero proper ideal of $R$ contained in $J$. Note that
$J\cap A=v^{(m)}A$. As $S=R_\mathcal{Z}$ is simple, $z^n\in I$ for some $n\geq
0$ by Lemma~\ref{yIy}. Thus $(xy-u)^n\in J$ and, as $x\in J$ and $u^rxy=x\alpha(u^r)y\in
J$ for $1\leq r<n$, it follows that $u^n\in J\cap A=v^{(m)}A$.

Conversely, suppose that $S_\Y$ is simple and that, for all $m\geq
1$, there exists $n\geq 0$ such that $u^n\in v^{(m)}A$. Let $I$ be a non-zero ideal
of $S$. By Lemma~\ref{yIy}, $y^m\in I$ for some $m\geq 0$. Choose the least such
$m$ and suppose that $m\neq 0$. There exists $n\geq 0$ such that
$u^n\in v^{(m)}A$. By \eqref{skewcomm}, $v^{(m)}y^{m-1}\in I$ and, by the normality of $v^{(m)}$,
$v^{(m)}Ay^{m-1}\subseteq I$ so $u^ny^{m-1}\in I$. Choose $t\geq 0$ minimal
such that $u^ty^{m-1}\in I$ and suppose that $t\neq 0$. Recall that $uz=zu$ so
$zu^{t-1}y^{m-1}=u^{t-1}zy^{m-1}=u^{t-1}xy^m-u^ty^{m-1}\in I$. As $z$
is invertible in $S$, it follows that $u^{t-1}y^{m-1}\in I$,
contradicting the choice of $t$. Thus $t=0$ and $y^{m-1}\in I$,
contradicting the choice of $m$. Therefore $m=0$, $1\in I$ and $S$
is simple.
\end{proof}

\begin{lemma}
\label{innerv0z} Let $m\geq 1$. If $\alpha^m$ is inner on
$A[z^{\pm 1};\gamma]$ then $v^{(m)}=0$.
\end{lemma}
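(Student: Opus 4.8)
The plan is to follow the template of Lemma~\ref{innerv0}, but to exploit the fact that in the Laurent ring $T:=A[z^{\pm1};\gamma]$ the element $z$ is not merely a unit but is itself $\gamma$-normal. First I would record the data already available in the conformal setting of this section: $\alpha(z)=\rho^{-1}z$, so that $\alpha^m(z)=\rho^{-m}z$; the splitting element $u$ is $\gamma$-normal in $A$ with $\gamma(u)=u$, and since $u$ commutes with $z$ (as $zu=\gamma(u)z=uz$) it is again $\gamma$-normal in $T$; and $z$ is $\gamma$-normal in $T$, meaning $za'=\gamma(a')z$ for all $a'\in T$ and $\gamma(z)=z$. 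Finally, recall from Definitions~\ref{Casimir} that $v^{(m)}=u-\rho^m\alpha^m(u)$.

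Now suppose $\alpha^m$ is inner on $T$, so that there is a unit $c$ of $T$ with $ca'=\alpha^m(a')c$ for every $a'\in T$. The key step is to pin down how $c$ interacts with $z$ and $u$. Applying the inner relation to $a'=z$ gives $cz=\rho^{-m}zc$, hence $zc=\rho^m cz$; comparing this with $zc=\gamma(c)z$, which holds because $z$ is $\gamma$-normal, and cancelling the unit $z$, I obtain $\gamma(c)=\rho^m c$. The $\gamma$-normality of $u$ in $T$ then yields $uc=\gamma(c)u=\rho^m cu$, while applying the inner relation to $a'=u$ gives $\alpha^m(u)c=cu$.

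Putting these together, $v^{(m)}c=uc-\rho^m\alpha^m(u)c=\rho^m cu-\rho^m cu=0$, and since $c$ is a unit I conclude $v^{(m)}=0$. The main thing to get right is the handling of the Laurent structure: because $A$ is an arbitrary $\F$-algebra, a unit of $T$ need not be a monomial, so the coefficient comparison used in Lemma~\ref{innerv0} would be awkward here. The device that avoids it is precisely the normality of the invertible element $z$, which lets me cancel $z$ in the second step and argue entirely with the single unit $c$ rather than its coefficients; everything else is a routine normality computation.
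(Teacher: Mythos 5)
Your proof is correct. All the ingredients you use are available in this section of the paper: $\alpha^m(z)=\rho^{-m}z$, the splitting element $u$ satisfies $\gamma(u)=u$ and $ua=\gamma(a)u$, and $\gamma$ extends to $T=A[z^{\pm 1};\gamma]$ fixing $z$ (this follows from Notation~\ref{w}, since $\gamma(w)=w$ and $z=w-u$), so both $z$ and $u$ are indeed $\gamma$-normal in $T$. From there your three-step computation --- $\gamma(c)=\rho^m c$ by cancelling the invertible $z$, then $uc=\rho^m cu$ by $\gamma$-normality of $u$, then $v^{(m)}c=0$ --- is watertight, and invertibility of $c$ finishes the job.

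The paper takes a different, shorter route: it observes that $uz^{-1}$ is \emph{central} in $T$ (precisely because $u$ and $z$ are both $\gamma$-normal and $\gamma$-fixed), and any inner automorphism fixes central elements; since $\alpha^m(uz^{-1})=\rho^m\alpha^m(u)z^{-1}$, this forces $\alpha^m(u)=\rho^{-m}u$ and hence $v^{(m)}=u-\rho^m\alpha^m(u)=0$. The paper's argument never touches the implementing unit $c$ at all, which is what makes it two lines long. Your argument trades that conceptual shortcut for a direct computation with $c$: you do not need to spot the central element, only to run normality relations, and as a by-product you extract the relation $\gamma(c)=\rho^m c$, which is exactly the constraint on the top coefficient that the paper later establishes by coefficient comparison in the proof of Lemma~\ref{aminner}. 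Both proofs correctly avoid the constant-term comparison of Lemma~\ref{innerv0}, which is unavailable here because units of the Laurent ring need not be monomials; your closing remark identifies that obstacle accurately.
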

\begin{proof}
The element $uz^{-1}$ is central in $A[z^{\pm 1};\gamma]$ so if
$\alpha^{m}$ is inner then $\alpha^m(uz^{-1})=uz^{-1}$. As
$\alpha^m(z)=\rho^{-m}z$ it follows that $\alpha^m(u)=\rho^{-m}u$
and hence that $v^{(m)}=u-\rho^m\alpha^m(u)=0$.
\end{proof}
\begin{defn}
Let $m,j\in \Z$. A non-zero element $c$ of $A$ will be called
$(m,j)$-{\it special} if $\gamma(c)=\rho^{m}c,
\alpha(c)=\rho^jc$ and $c$ is $\alpha^m\gamma^{-j}$-normal. Thus $c\gamma^j(a)=\alpha^m(a)c$ for all $a\in A$. If $A$ is $\{\alpha,\gamma\}$-simple
then any
$(m,j)$-special element of $A$ is necessarily a unit in $A$ and its inverse is
$(-m,-j)$-special.
\end{defn}
\begin{rmk}\label{rhoone}
If $\rho$ is a root of unity and $\gamma=\id_A$ then $1$ is
$(0,j)$-special for all $j$ such that $\rho^j=1$.
\end{rmk}

\begin{lemma} \label{aminner} Suppose that $A$ is
$\{\alpha,\gamma\}$-simple and let $m\geq 1$. Then $\alpha^m$ is
inner on $A[z^{\pm 1};\gamma]$, induced by a unit $d$ such that $\alpha(d)=d$, if and only if there exists $j\in\Z$
such that $A$ has an  $(m,j)$-special element.
\end{lemma}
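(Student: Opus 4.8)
The plan is to prove both implications by direct computation, exploiting the fact that $A[z^{\pm 1};\gamma]$ is a free left $A$-module with basis $\{z^i\}_{i\in\Z}$, together with the commutation rules $z^ia=\gamma^i(a)z^i$ and $\alpha(z)=\rho^{-1}z$, the latter giving $\alpha(z^i)=\rho^{-i}z^i$. In both directions the whole content is a comparison of coefficients of the powers $z^i$.

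For the forward direction, suppose $\alpha^m$ is induced on $A[z^{\pm 1};\gamma]$ by a unit $d=\sum_i d_iz^i$, a finite sum with $d_i\in A$, satisfying $\alpha(d)=d$. I would translate the three available relations into conditions on the coefficients $d_i$. The relation $\alpha(d)=d$, expanded using $\alpha(z^i)=\rho^{-i}z^i$, forces $\alpha(d_i)=\rho^i d_i$ for every $i$. The relation $\alpha^m(a)d=da$ for $a\in A$, expanded using $z^ia=\gamma^i(a)z^i$, forces $\alpha^m(a)d_i=d_i\gamma^i(a)$ for every $i$ and every $a\in A$, which says exactly that $d_i$ is $\alpha^m\gamma^{-i}$-normal. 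Finally the relation $\alpha^m(z)d=dz$, that is $\rho^{-m}zd=dz$, expanded using $zd_i=\gamma(d_i)z$, forces $\gamma(d_i)=\rho^m d_i$ for every $i$. Since $d\neq 0$, some coefficient $d_i$ is non-zero, and the three conditions show that this $d_i$ is an $(m,i)$-special element of $A$; taking $j=i$ supplies the required element.

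For the converse, suppose $A$ has an $(m,j)$-special element $c$. Since $A$ is $\{\alpha,\gamma\}$-simple, $c$ is a unit of $A$, so $d:=cz^j$ is a unit of $A[z^{\pm 1};\gamma]$. I would then verify directly that $d$ induces $\alpha^m$ and satisfies $\alpha(d)=d$. The conditions $\alpha(c)=\rho^j c$ and $\alpha(z^j)=\rho^{-j}z^j$ give $\alpha(d)=d$; the $\alpha^m\gamma^{-j}$-normality $c\gamma^j(a)=\alpha^m(a)c$ gives $da=c\gamma^j(a)z^j=\alpha^m(a)d$ for all $a\in A$; and $\gamma(c)=\rho^m c$ together with $zc=\gamma(c)z$ gives $\alpha^m(z)d=\rho^{-m}\gamma(c)z^{j+1}=cz^{j+1}=dz$. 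As $A$, $z$ and $z^{-1}$ generate $A[z^{\pm 1};\gamma]$ and $d$ is a unit, these relations on generators are enough to conclude $\alpha^m(s)=dsd^{-1}$ for all $s$, with $\alpha(d)=d$ as required.

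The computations are routine coefficient comparisons, so no serious obstacle arises. The only points needing care are the bookkeeping of the powers of $\rho$ introduced by $\alpha(z)=\rho^{-1}z$ and by $\gamma$ acting on coefficients when reindexing $z^id_i$ against $d_iz^i$, and the observation, used in the converse, that an $(m,j)$-special element is automatically invertible under $\{\alpha,\gamma\}$-simplicity, which is what makes $d$ a genuine unit.
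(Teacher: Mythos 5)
Your proof is correct and follows essentially the same route as the paper: the converse is the same explicit construction $d=cz^j$ with the same three verifications, and the forward direction is the same coefficient comparison (the paper extracts the $(m,j)$-special conditions only for the leading coefficient of $d$, whereas you note that every nonzero coefficient $d_i$ is $(m,i)$-special — a harmless and equally valid variant).
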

\begin{proof}
First suppose that $c\in A$ is $(m,j)$-special. By $\{\alpha,\gamma\}$-simplicity, $c\in U(A)$. Let $d=cz^j$. Thus
$d\in U(A[z^{\pm 1};\gamma])$ and  $\alpha(d)=\rho^jc\rho^{-j}z^j=d$. Also
\[dz=cz^{j+1}=z\gamma^{-1}(c)z^j=\rho^{-m}zcz^{j}=\alpha^{m}(z)d\]
and, for $a\in A$,
\[da=c\gamma^{j}(a)z^j=\alpha^m(a)cz^j=\alpha^m(a)d.\] Thus $\alpha(d)=d$ and $\alpha^m$
is the inner automorphism of $A[z^{\pm 1};\gamma]$ induced by $d$.

Conversely, suppose that $\alpha^m$ is inner on
$A[z^{\pm 1};\gamma]$ induced by a unit $d=\sum_{i=t}^j
c_iz^i\in A[z^{\pm 1};\gamma]$ such that $\alpha(d)=d$ and $c_j\neq 0$. (If $A$ is a domain then $t=j$.)
Then $df=\alpha^m(f)d$ for all
$f\in A[z^{\pm 1};\gamma]$ and  $\alpha(c_j)=\rho^jc_j$. As \[\left(\sum c_iz^i\right)z=\alpha^m(z)\left(\sum
c_iz^i\right)=\rho^{-m}\left(\sum \gamma(c_i)z^i\right)z,\] we also have
$\gamma(c_j)=\rho^m c_j$. Finally, for $a\in
A$, \[\alpha^m(a)\left(\sum c_i z^i\right)=\left(\sum c_i z^i\right)a=\sum
c_i\gamma^i(a)z^i,\] so $c_j\gamma^j(a)=\alpha^m(a)c_j$.  Thus
$c_j$ is $\{m,j\}$-special.
\end{proof}

\begin{lemma} \label{lmzsic} The ring $A[z^{\pm 1};\gamma]$ is
$\alpha$-simple if and only if $A$ is $\{\alpha,\gamma\}$-simple
and there does not exist  $j\geq 1$ such that $A$ has a
$(0,-j)$-special element.
\end{lemma}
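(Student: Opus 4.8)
The plan is to work throughout with the $\Z$-grading $T:=A[z^{\pm1};\gamma]=\bigoplus_{k\in\Z}Az^k$. Two structural facts drive everything. First, the extension of $\gamma$ to $T$ with $\gamma(z)=z$ coincides with conjugation by $z$, since $zaz^{-1}=\gamma(a)$ for $a\in A$; thus $\gamma$ is inner on $T$ and \emph{every} two-sided ideal of $T$ is automatically $\gamma$-stable. Second, because $\alpha(z)=\rho^{-1}z$ we have $\alpha(Az^k)=Az^k$, so $\alpha$ is a graded automorphism. This is exactly what lets a minimal-degree argument interact cleanly with $\alpha$-invariance.

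For the forward implication I would argue contrapositively. If $I\subsetneq A$ is a non-zero $\{\alpha,\gamma\}$-ideal, then $IT=\bigoplus_k Iz^k$ is a non-zero proper $\alpha$-ideal of $T$; so $\alpha$-simplicity of $T$ forces $A$ to be $\{\alpha,\gamma\}$-simple. Next, suppose $c$ is a $(0,-j)$-special element with $j\geq1$, so that $\gamma(c)=c$, $\alpha(c)=\rho^{-j}c$ and $ca=\gamma^{j}(a)c$ for all $a\in A$. A short computation using these relations shows that $z^{j}-c$ is a normal element of $T$ with $(z^{j}-c)a=\gamma^{j}(a)(z^{j}-c)$, $(z^{j}-c)z=z(z^{j}-c)$, and $\alpha(z^{j}-c)=\rho^{-j}(z^{j}-c)$. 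The ideal $(z^{j}-c)T$ is proper: a top/bottom degree comparison shows $1\notin(z^{j}-c)T$, because multiplying $z^{j}-c$ by any non-zero element spreads its support over a range of degrees of length at least $j\geq1$ and so cannot produce the single term $1$. Hence $(z^{j}-c)T$ is a non-zero proper $\alpha$-ideal and $T$ is not $\alpha$-simple. (Equivalently, $cz^{-j}$ is a central, $\alpha$-fixed unit that is transcendental over $\F$.)

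The substantial direction is the converse, and here the main work, and the main obstacle, is a minimal-span reduction that must be steered into the exact twisting conventions of the definition of special element. Assume $A$ is $\{\alpha,\gamma\}$-simple with no $(0,-j)$-special element for $j\geq1$, and let $I$ be a non-zero $\alpha$-ideal of $T$. Using that $z$ is a unit I normalise and choose $f\in I\setminus\{0\}$ of minimal span $d\geq0$, written $f=\sum_{k=0}^{d}a_kz^k$ with $a_0,a_d\neq0$. When $d=0$, the element $a_0$ (after clearing the $z$-power) lies in $I\cap A$, which is a non-zero $\{\alpha,\gamma\}$-ideal of $A$ ($\alpha$-stable as an intersection of graded $\alpha$-stable pieces, $\gamma$-stable because $\gamma$ is inner); hence $I\cap A=A$ and $I=T$.

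For $d\geq1$ I would first promote $f$ to a monic element: the top coefficients of the span-$\leq d$ elements of $I$ lying in $A\oplus\cdots\oplus Az^{d}$ form an ideal $\Lambda$ of $A$, and left multiplication, right multiplication (which twists the leading coefficient by $\gamma^{d}$), application of $\alpha$, and conjugation by $z$ together show $\Lambda$ is a non-zero $\{\alpha,\gamma\}$-ideal, so $\Lambda=A$ and we may take $a_d=1$. Then the three reductions $f-\rho^{d}\alpha(f)$, $f-zfz^{-1}$ and $af-f\gamma^{-d}(a)$ each have vanishing $z^{d}$-coefficient, hence lie in $I$ with span $<d$ and so vanish by minimality; reading off the coefficients gives, for all $k<d$, the relations $\alpha(a_k)=\rho^{k-d}a_k$, $\gamma(a_k)=a_k$, and $aa_k=a_k\gamma^{k-d}(a)$ for all $a\in A$. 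Taking $k=0$ shows $a_0$ is a non-zero $(0,-d)$-special element with $d\geq1$, contradicting the hypothesis; thus $d=0$ and $T$ is $\alpha$-simple. The delicate point is not any deep idea but aligning the exponents of $\gamma$ and powers of $\rho$ thrown off by the reductions with the precise eigenvalue and normality conditions in the definition of $(0,-d)$-special, and confirming that $\Lambda$ really is a two-sided $\{\alpha,\gamma\}$-ideal.
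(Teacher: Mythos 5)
Your proposal is correct and follows essentially the same route as the paper: the forward direction uses the same two constructions ($IA[z^{\pm 1};\gamma]$ and the normal element $z^{j}-c$), and your minimal-span, monic-leading-coefficient reduction with the three differences $f-\rho^{d}\alpha(f)$, $f-zfz^{-1}$ and $af-f\gamma^{-d}(a)$ is exactly the paper's argument with its leading-coefficient ideals $L_m$ rephrased as your $\Lambda$. The only additions are cosmetic: you make explicit the properness of $(z^{j}-c)A[z^{\pm 1};\gamma]$ (note this uses that $c$ is a unit, available since $\{\alpha,\gamma\}$-simplicity is established first) and the observation that conjugation by $z$ realises $\gamma$, both of which the paper uses implicitly.
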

\begin{proof}
Suppose that $A[z^{\pm 1};\gamma]$ is $\alpha$-simple. If $I$ is a
non-zero proper $\{\alpha,\gamma\}$-ideal of $A$ then $IA[z^{\pm 1};\gamma]$ is a non-zero proper
ideal of $A[z^{\pm 1};\gamma]$ and, as
$\alpha(z)=\rho^{-1}z$, it is an $\alpha$-ideal.  Hence $A$ is
$\{\alpha,\gamma\}$-simple. Let $j\geq 1$ be such that $A$ has a
$(0,-j)$-special element $c$. Then
\[(z^{j}-c)z=z(z^{j}-c),\;
\alpha(z^{j}-c)=\rho^{-j}(z^{j}-c)\text{ and }
(z^{j}-c)a=\gamma^{j}(a)(z^{j}-c)\]
for all $a\in A$, whence
$(z^{j}-c)A[z^{\pm 1};\gamma]$ is a non-zero proper $\alpha$-ideal
of $A[z^{\pm 1};\gamma]$. Thus no such $j$ exists.

Conversely, suppose that $A$ is $\{\alpha,\gamma\}$-simple, and
that, for $j\geq 1$, $A$ has no $(0,-j)$-special element. Let
$J$ be a non-zero $\alpha$-ideal of $A[z^{\pm 1};\gamma]$. For each
$m\geq 0$, let $L_{m}$ denote the ideal of $A$ consisting of the
leading coefficients of those elements of $J\cap A[z;\gamma]$
which have degree $m$, together with 0.  For
$a_{0},\ldots,a_{m}\in A$,
\[\rho^{m}\alpha\left(\sum_{i=0}^{m}a_{i}z^{i}\right)=\sum_{i=0}^{m}\rho^{m-i}\alpha(a_{i})z^{i}\,\text{ and }\,z\left(\sum_{i=0}^{m}a_{i}z^{i}\right)z^{-1}=\sum_{i=0}^{j}\gamma(a_{i})z^{i},\]
whence  $L_{m}$ is an $\{\alpha,\gamma\}$-ideal of $A$.  Choose
 $j$ minimal such that $L_{j}\neq 0$, whence $L_j=A$, and suppose that $j\neq 0$. There exist
$c_{0},\ldots,c_{j}\in A$, with $c_{j}=1$  such that $g:=\sum_{i=0}^{j}c_{i}z^{i}\in
J\cap A[z;\gamma]$. By the minimality of
$j$,  $c_{0}\neq 0$. Then
\[g-zgz^{-1}= \sum_{i=0}^{j-1}(c_{i}-\gamma(c_i))z^{i}\in J\cap A[z;\gamma],\] whence $\gamma(c_0)=c_0$. Also
\[g-\rho^{j}\alpha(g)=\sum_{i=0}^{j-1}(c_{i}-\rho^{j-i}\alpha(c_{i}))z^{i}\in J\cap A[z;\gamma],\] whence
$\rho^{j}\alpha(c_0)=c_0$. Finally,
\[bg-g\gamma^{-j}(b)=\sum_{i=0}^{j-1}(bc_{i}-c_{i}\gamma^{i-j}(b))z^{i}\in J\cap A[z;\gamma]\] for all $b\in A$,
whence $c_0$ is $\gamma^j$-normal. Thus
$c_0$ is $(0,-j)$-special. This is a contradiction so
$j=0$, $1=g\in J$ and $J=A[z^{\pm 1};\gamma]$, which is therefore
$\alpha$-simple.
\end{proof}

\begin{theorem}
\label{s-thm} The ring $S=R_\mathcal{Z}$ is simple if and only if the following hold:
\begin{enumerate}
\item $A$ is $\{\alpha,\gamma\}$-simple;
\item there do not exist $c\in A$ and $m,j\in\Z$, with $m$ and $j$ not both $0$, such that $c$ is
$(m,j)$-special;
\item  for all
$m\geq 1$, there exists $n\geq 0$ such that $u^n\in v^{(m)}A$.
\end{enumerate}
\end{theorem}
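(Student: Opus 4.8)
The plan is to mirror the three-stage structure of the proof of Theorem~\ref{r-thm0}: first strip off condition~(iii) using the localization lemma, then analyse simplicity of $S_\Y$ via the skew-Laurent criterion, and finally translate the resulting $\alpha$-simplicity and innerness conditions into the language of special elements. The main work is bookkeeping with the two integer parameters of a special element, and the only genuinely delicate point is matching the conditions coming from two different lemmas against the single clause~(ii).

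First, by Lemma~\ref{Ssimlocsim}, $S$ is simple if and only if $S_\Y$ is simple and clause~(iii) holds. Since clause~(iii) is exactly the second condition of that lemma, it suffices to prove that $S_\Y$ is simple precisely when (i) and (ii) hold. Now $S_\Y$ is identified with $A[z^{\pm 1};\gamma][y^{\pm 1};\alpha]$, with $\alpha(z)=\rho^{-1}z$, so I would apply Theorem~\ref{notsowellk} with $T=A[z^{\pm 1};\gamma]$ and $\sigma=\alpha$. I use Theorem~\ref{notsowellk} rather than Theorem~\ref{wellk} precisely because Lemma~\ref{aminner} is phrased in terms of inner automorphisms induced by a unit $d$ with $\alpha(d)=d$, which is the exact form of the innerness condition in Theorem~\ref{notsowellk}. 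This reduces the simplicity of $S_\Y$ to two requirements: that $A[z^{\pm 1};\gamma]$ be $\alpha$-simple, and that for no $m\geq 1$ be $\alpha^m$ induced on $A[z^{\pm 1};\gamma]$ by an $\alpha$-fixed unit.

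Each requirement is now dispatched by an earlier lemma. By Lemma~\ref{lmzsic}, $A[z^{\pm 1};\gamma]$ is $\alpha$-simple if and only if $A$ is $\{\alpha,\gamma\}$-simple and $A$ has no $(0,-j)$-special element for any $j\geq 1$. Granting $\{\alpha,\gamma\}$-simplicity, Lemma~\ref{aminner} then says that, for $m\geq 1$, $\alpha^m$ is induced by an $\alpha$-fixed unit of $A[z^{\pm 1};\gamma]$ if and only if $A$ has an $(m,j)$-special element for some $j\in\Z$. Assembling these, $S_\Y$ is simple exactly when $A$ is $\{\alpha,\gamma\}$-simple, $A$ has no $(0,-j)$-special element for $j\geq 1$, and $A$ has no $(m,j)$-special element for any $m\geq 1$ and any $j\in\Z$.

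It remains to reconcile these three bullet conditions with clause~(ii), namely that no $(m,j)$-special element exists with $(m,j)\neq(0,0)$; this is the step that needs care. The key tool is the symmetry recorded in the definition of special element: under $\{\alpha,\gamma\}$-simplicity, $c$ is $(m,j)$-special if and only if $c^{-1}$ is $(-m,-j)$-special. Using this, the case $m\leq -1$ of clause~(ii) is redundant, since any $(m,j)$-special element would yield, via its inverse, a $(-m,-j)$-special element with $-m\geq 1$, excluded by the third bullet. The case $m=0,\ j\neq 0$ likewise reduces to the second bullet: a $(0,j)$-special element with $j\geq 1$ gives a $(0,-j)$-special element by inversion, while $j\leq -1$ is already of the form $(0,-j')$ with $j'\geq 1$. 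Conversely, (i) and (ii) trivially imply all three bullets. Hence $S_\Y$ is simple if and only if (i) and (ii) hold, and combining this with (iii) via Lemma~\ref{Ssimlocsim} gives the theorem.
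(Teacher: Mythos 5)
Your proposal is correct and follows essentially the same route as the paper's proof: reduction via Lemma~\ref{Ssimlocsim}, then Theorem~\ref{notsowellk} applied to $A[z^{\pm 1};\gamma][y^{\pm 1};\alpha]$, with Lemmas~\ref{lmzsic} and \ref{aminner} translating the two skew-Laurent conditions into statements about special elements. Your reconciliation of the three resulting conditions with clause~(ii) via the inversion symmetry ($c$ is $(m,j)$-special iff $c^{-1}$ is $(-m,-j)$-special under $\{\alpha,\gamma\}$-simplicity) is exactly the "symmetry" argument the paper invokes.
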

\begin{proof}
Suppose that $S$ is simple. By Lemma \ref{Ssimlocsim}, (iii) holds and
$S_\mathcal{Y}$ is simple.
By Theorem~\ref{notsowellk}, $A[z^{\pm 1};\gamma]$ is $\alpha$-simple and there does not exist $m\geq 1$ for
which $\alpha^{m}$ is inner on $A[z^{\pm 1};\gamma]$, induced by a unit $d$ such that $\alpha(d)=d$.
By Lemma~\ref{lmzsic}, $A$ is $\{\alpha,\gamma\}$-simple so (i) holds. Finally, suppose that $c\in A$ and that $m,j\in\Z$
are such that $c$ is
$(m,j)$-special. By Lemma~\ref{aminner}, we cannot have $m>0$ and, by the symmetry arising from $\{\alpha,\gamma\}$-simplicity, we cannot have $m<0$. Thus $m=0$. By Lemma~\ref{lmzsic} we cannot have $j<0$ and, by  symmetry, we cannot have $j>0$. Thus $m=j=0$ and
(ii) holds.

Conversely suppose that (i), (ii) and (iii) hold. By Lemma~\ref{aminner}, there does not exist $m\geq 1$ for
which $\alpha^{m}$ is inner on $A[z^{\pm 1};\gamma]$, induced by a unit $d$ such that $\alpha(d)=d$. By  Lemma~\ref{lmzsic}, $A[z^{\pm 1};\gamma]$ is $\alpha$-simple. It follows, by Theorem~\ref{notsowellk}, that $S_\mathcal{Y}$ is simple and, by Lemma~\ref{Ssimlocsim} and (iii), that $S$ is simple.
\end{proof}

\begin{rmk}
When $u$ is nilpotent, statement (iii) of Theorem~\ref{s-thm} holds vacuously
so the theorem simplifies. In particular, this applies when $u=0$, in which case $v=0$, $z=xy=\rho yx$, $y^{-1}\in S$ and
$S=S_\Y$ is an iterated skew Laurent extension $A[z^{\pm 1};\gamma][y^{\pm 1};\alpha]$.  When $u$ is not nilpotent, statement (ii) can be relaxed, as follows.
\end{rmk}

\begin{theorem}
\label{s-thm2} Suppose that $u$ is not nilpotent. The ring
$S$ is simple if and only if
\begin{enumerate}
\item $A$ is $\{\alpha,\gamma\}$-simple;
\item there does not exist  $j\in \Z\backslash\{0\}$ such that $A$ has a
$(0,j)$-special element;
\item  for all
$m\geq 1$, there exists $n\geq 0$ such that $u^n\in v^{(m)}A$.
\end{enumerate}
\end{theorem}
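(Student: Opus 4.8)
The plan is to deduce this from Theorem~\ref{s-thm}, whose conditions (i) and (iii) coincide with those stated here. Only condition (ii) differs: here it merely excludes the $(0,j)$-special elements with $j\neq 0$, whereas in Theorem~\ref{s-thm} one must rule out \emph{all} $(m,j)$-special elements with $(m,j)\neq(0,0)$. Since condition (ii) of Theorem~\ref{s-thm} is formally stronger, it implies the present condition (ii); hence the forward implication, that simplicity of $S$ forces (i), (ii) and (iii), is immediate from Theorem~\ref{s-thm}. The work lies entirely in the converse, where one must upgrade the weaker condition (ii) to the stronger one under the standing hypothesis that $u$ is not nilpotent.

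First I would record the consequence of non-nilpotency: if $u$ is not nilpotent and condition (iii) holds, then $v^{(m)}\neq 0$ for every $m\geq 1$. Indeed (iii) supplies $n\geq 0$ with $u^n\in v^{(m)}A$, and were $v^{(m)}=0$ this would force $u^n=0$, contradicting non-nilpotency (noting $u^0=1\neq 0$). This is the single place where the hypothesis on $u$ enters, and it is the crux of the argument.

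Next, assuming (i), (ii) and (iii), suppose for contradiction that $A$ has an $(m,j)$-special element $c$ with $(m,j)\neq(0,0)$. If $m\neq 0$ then, using that $\{\alpha,\gamma\}$-simplicity makes $c$ a unit whose inverse is $(-m,-j)$-special, I may assume $m\geq 1$. By Lemma~\ref{aminner}, the existence of such an element means $\alpha^m$ is inner on $A[z^{\pm 1};\gamma]$, so Lemma~\ref{innerv0z} gives $v^{(m)}=0$, contradicting the previous paragraph. Hence $m=0$ and $j\neq 0$, contradicting condition (ii) as stated here. Therefore no $(m,j)$-special element with $(m,j)\neq(0,0)$ exists, which is precisely condition (ii) of Theorem~\ref{s-thm}.

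With (i) and (iii) given and condition (ii) of Theorem~\ref{s-thm} now established, Theorem~\ref{s-thm} yields that $S$ is simple, completing the converse. The only genuinely new ingredient beyond Theorem~\ref{s-thm} is the observation that non-nilpotency of $u$ together with (iii) prevents any $v^{(m)}$ from vanishing; everything else is a direct appeal to the lemmas on inner automorphisms of $A[z^{\pm 1};\gamma]$. I do not anticipate a serious obstacle, the main point being to organise the reduction to the case $m\geq 1$ and to isolate the role of non-nilpotency correctly.
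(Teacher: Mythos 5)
Your proposal is correct and follows essentially the same route as the paper: the paper's proof likewise assumes (i) and (iii), rules out $(m,j)$-special elements with $m\neq 0$ by reducing to $m\geq 1$ (replacing $c$ by $c^{-1}$), invoking Lemma~\ref{aminner} and Lemma~\ref{innerv0z} to get $v^{(m)}=0$, and noting this contradicts (iii) precisely because $u$ is not nilpotent, whence Theorem~\ref{s-thm} applies. Your write-up merely makes explicit two points the paper leaves implicit, namely the forward implication and the isolation of the non-nilpotency step.
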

\begin{proof} Suppose that (i) and (iii) hold.
Suppose that there exists $m\neq0$ such that $A$ has a normal
$(m,j)$-special element $c$ for some $j\in \Z$. By (i) $c$ is a unit so, replacing $c$ by $c^{-1}$ if $m<0$, we can assume that $m\geq 1$. By Lemma \ref{aminner}, $\alpha^m$ is inner and, by Lemma \ref{innerv0z},
$v^{(m)}=0$, contradicting (iii). Therefore statement (ii) of Theorem~\ref{s-thm} can be relaxed as stated.
\end{proof}

\begin{rmk}\label{rhoone1}
Recall from Remark~\ref{rhoone} that if $\rho$ is a root of unity and $\gamma=\id_A$ then $1$ is
$(0,j)$-special for all $j\in \Z$ such that $\rho^j=1$ and consequently $S$ is not simple. In this situation $z^j-\lambda$
is central for all $\lambda\in \F$.
\end{rmk}

\section{Generalized Weyl algebras}\label{simpleT}
In this section, we again suppose that the $4$-tuple
$(A,\alpha,v,\rho)$ is conformal with splitting element $u$. Here we consider the simplicity of $T:=R/zR$ and let $X=x+zR$ and $Y=y+zR$. As $zR\cap
A=0$, $A$ embeds in $T$ and we can identify each element $a$ of
$A$  with $a+zR$. Then $T$ is the ring extension of $A$ generated
by $X$ and $Y$ subject to the relations $XY=u$, $YX=\alpha(u)$,
and, for all $a\in A$, $Xa=\beta(a)X$ and $Ya=\alpha(a)Y$. Note that $T$, which we occasionally denote by $T(A,\alpha,u)$, depends on $u$ rather than on $v$ and $\rho$. In the
case where $u$ is central this ring was studied by Rosenberg in \cite{rosen}, under the name
{\it hyperbolic ring}, by the author in
\cite{primitive}, without a name,  and by Bavula in a sequence of papers, including \cite{fa,vlad1,vlad5},
under the name {\it generalized Weyl algebra}. We shall extend
use of the latter name to the case where $u$ is normal and not
necessarily central.
\begin{lemma}\label{alphacomm}
For all $i\in\Z$, $\alpha^i(u)$ is $\gamma$-normal in $A$ and $\alpha^i(u)\alpha^j(u)=\alpha^j(u)\alpha^i(u)$ for all $j\in\Z$.
\end{lemma}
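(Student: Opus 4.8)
The plan is to establish both claims from the defining properties of a splitting element and the compatibility relations among $\alpha$, $\beta$, and $\gamma$ recorded in Definition~\ref{defambi}. The starting point is that $u$ is $\gamma$-normal in $A$, meaning $ua=\gamma(a)u$ for all $a\in A$ and $\gamma(u)=u$. The first task is to show that each $\alpha^i(u)$ is $\gamma$-normal. Since $\alpha$ commutes with $\gamma$, I would apply $\alpha^i$ to the normality relation $ua=\gamma(a)u$: this gives $\alpha^i(u)\alpha^i(a)=\alpha^i(\gamma(a))\alpha^i(u)=\gamma(\alpha^i(a))\alpha^i(u)$. As $a$ ranges over $A$ so does $\alpha^i(a)$, because $\alpha$ is an automorphism, and hence $\alpha^i(u)b=\gamma(b)\alpha^i(u)$ for all $b\in A$. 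For the fixed-point condition, applying $\alpha^i$ to $\gamma(u)=u$ and using $\alpha^i\gamma=\gamma\alpha^i$ yields $\gamma(\alpha^i(u))=\alpha^i(\gamma(u))=\alpha^i(u)$. Together these say precisely that $\alpha^i(u)$ is $\gamma$-normal.

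The second claim, that the elements $\alpha^i(u)$ commute pairwise, then follows quickly from $\gamma$-normality. For any $i,j\in\Z$, I would use that $\alpha^i(u)$ is $\gamma$-normal to compute $\alpha^i(u)\,\alpha^j(u)=\gamma(\alpha^j(u))\,\alpha^i(u)$. But I have just shown $\gamma(\alpha^j(u))=\alpha^j(u)$, so the right-hand side is $\alpha^j(u)\,\alpha^i(u)$, giving the desired commutation. This is clean because the fixed-point half of $\gamma$-normality is exactly what is needed to cancel the twist introduced by moving $\alpha^j(u)$ past $\alpha^i(u)$.

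I do not anticipate a genuine obstacle here; the lemma is essentially a bookkeeping consequence of the hypotheses, and the only point requiring care is making sure $\alpha^i$ is applied to \emph{both} parts of the $\gamma$-normality condition (the multiplicative relation and the fixed-point condition) and that the commutativity $\alpha\gamma=\gamma\alpha$ is invoked to keep $\gamma$ in place after applying $\alpha^i$. The statement is asserted for all $i\in\Z$, so I would note at the outset that $\alpha$ being an $\F$-automorphism makes $\alpha^i$ well-defined and bijective for negative as well as non-negative $i$, which is what licenses the substitution $b=\alpha^i(a)$ ranging over all of $A$.
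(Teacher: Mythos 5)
Your proof is correct and follows essentially the same route as the paper: $\gamma$-normality of each $\alpha^i(u)$ via the commutativity $\alpha\gamma=\gamma\alpha$, and then the commutation $\alpha^i(u)\alpha^j(u)=\gamma(\alpha^j(u))\alpha^i(u)=\alpha^j(u)\alpha^i(u)$. The only difference is cosmetic: you cite the fixed-point condition $\gamma(\alpha^j(u))=\alpha^j(u)$ established in the first part, whereas the paper re-derives it inline as $\gamma(\alpha^j(u))=\alpha^j(\gamma(u))=\alpha^j(u)$, and you spell out the ``readily checked'' step that the paper leaves to the reader.
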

\begin{proof} As $\alpha\gamma=\gamma\alpha$, it is readily checked that $\alpha^i(u)$ is $\gamma$-normal in $A$ for all $i$
and hence that
\[\alpha^i(u)\alpha^j(u)=\gamma(\alpha^j(u))\alpha^i(u)=\alpha^j(\gamma(u))\alpha^i(u)=\alpha^j(u)\alpha^i(u).\]
\end{proof}

\begin{rmk}
\label{Trmks}
Here we list some basic properties of generalized Weyl
algebras. These are well-documented in the above papers, in
particular \cite{primitive} where the notation is close to that
used here, and remain true in the present generality.

\noindent (i)
For all $m\geq 1$,
\[X^{m}Y^{m}=\displaystyle\prod_{i=0}^{m-1}\alpha^{-i}(u)\text{ and }
Y^{m}X^{m}=\displaystyle\prod_{i=1}^{m}  \alpha^{ i}(u).\] By Lemma~\ref{alphacomm}, these
products are independent of the order of the terms.

\noindent (ii)
Every element of $T$ can be written uniquely in each of the forms
\[
a+\sum_{i=1}^{m}b_{i}Y^{i}+\sum_{j=1}^{n}c_{j}X^{j}=a+\sum_{i=1}^{m}Y^{i}b^\prime_{i}+\sum_{j=1}^{n}X^{j}c^\prime_{j}, \]
where $a,b_{i},c_{j},b^\prime_{i},c^\prime_{j}\in A$, $1\leq i\leq m$, $1\leq j\leq n$.

\noindent (iii) If $A$ is a domain then $T$ is a domain.

\noindent (iv)
The ring $T$ is $\Z$-graded with $\deg Y=1$, $\deg X=-1$, and $T_0=A$.  For $m\geq 1$, $T_m=AY^m=Y^mA$ and $T_{-m}=AX^m=X^mA$.
\end{rmk}

\begin{notn} For the remainder of this section, let ${\Y}=\{Y^{i}:i\in\N\}$ and
${\X}=\{X^{i}:i\in\N\}$, the images in $T$ of the right and left Ore subsets of $R$ for which the notation $\Y$ and $\X$ was used previously. Thus $\Y$ and $\X$ are right and left
Ore sets in $T$. If $u$ is regular in $A$ then $Y$ and $X$ are regular in
$T$ and the localizations $T_\Y$ and $T_\X$ are isomorphic to
$A[Y^{\pm 1};\alpha]$ and $A[X^{\pm 1};\beta]$ respectively, with
$X\mapsto uY^{-1}$ in the former and $Y\mapsto \alpha(u)X^{-1}$ in
the latter.
\end{notn}

The following result generalizes \cite[Theorem 6.1]{primitive},
where $A$ is commutative. Bavula \cite{vlad5} gives a proof for
the case where $u$ is central.

\begin{theorem} Let $\alpha$ and $\gamma$ be commuting $\F$-automorphisms $A$ of an $\F$-algebra $A$, let $u\in A$ be $\gamma$-normal  and let $T$ be the generalized Weyl algebra $T(A,\alpha,u)$.
\label{t-thm} Then $T$ is simple if and only if
\begin{enumerate}
\item  $A$ is $\alpha$-simple;
\item   $\alpha^{m}$ is outer for all $m\geq 1$;
\item  $u$ is regular;
\item   $uA+\alpha^{m}(u)A=A$ for all $m\geq 1$.
\end{enumerate}
\end{theorem}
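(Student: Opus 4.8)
The plan is to prove Theorem~\ref{t-thm} by the same localization strategy used throughout the paper: analyze the ring $T$ via its localization $T_{\Y}$, which is a skew Laurent extension, and then bridge back using Lemma~\ref{yIy}. First I would establish the forward direction. Assume $T$ is simple. Condition (iii), that $u$ is regular, should come first: if $u$ were a zero-divisor, say $u a = 0$ with $a \neq 0$, then using the relation $XY=u$ one can manufacture a proper nonzero ideal, so simplicity forces $u$ regular. Once $u$ is regular, $Y$ and $X$ are regular in $T$ and, as recorded in the Notation preceding the theorem, $T_{\Y} \cong A[Y^{\pm 1};\alpha]$. By \cite[2.1.16(iii)]{McCR} (and its left analogue) the localization $T_{\Y}$ is simple, so applying Theorem~\ref{wellk} to $A[Y^{\pm 1};\alpha]$ yields exactly (i) $A$ is $\alpha$-simple and (ii) $\alpha^m$ is outer for all $m\geq 1$. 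It remains to extract (iv); this I would obtain by a direct ideal-theoretic argument, showing that if $uA+\alpha^m(u)A \neq A$ for some $m$, then one can build a proper nonzero ideal of $T$ using the grading from Remark~\ref{Trmks}(iv) and the formulas in Remark~\ref{Trmks}(i) for $X^mY^m$ and $Y^mX^m$.

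For the converse, assume (i)--(iv). Since $u$ is regular, $T_{\Y}\cong A[Y^{\pm 1};\alpha]$, and (i) together with (ii) give, via Theorem~\ref{wellk}, that this skew Laurent ring is simple; hence $T_{\Y}$ is simple. Now let $I$ be a nonzero ideal of $T$. By Lemma~\ref{yIy}, some $Y^{s}\in I$. The crux is to drive $s$ down to $0$. The natural mechanism is the relation $X^mY^m=\prod_{i=0}^{m-1}\alpha^{-i}(u)$ from Remark~\ref{Trmks}(i): multiplying $Y^m\in I$ on the left by $X^m$ lands a product of shifted copies of $u$ inside $I\cap A$, and condition (iv), the coprimality $uA+\alpha^m(u)A=A$, is precisely what lets one combine such products with the $\alpha$-ideal structure to force $1\in I\cap A$, so that $I=T$. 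I would run a minimality/descent argument on $s$, showing that $Y^{s}\in I$ with $s\geq 1$ forces $Y^{s-1}\in I$, contradicting minimality, so $s=0$ and $1\in I$.

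The main obstacle I expect is the careful handling of condition (iv) in both directions, since this is where the present generality (with $u$ merely $\gamma$-normal rather than central) bites. In the central case covered by \cite{primitive} and \cite{vlad5}, the products $\prod \alpha^{-i}(u)$ and the ideals they generate behave transparently, but here one must use Lemma~\ref{alphacomm} to ensure the factors $\alpha^i(u)$ commute and are each $\gamma$-normal, so that the relevant products are well-defined elements generating two-sided ideals of $A$. The descent step will require knowing that $u$ and $\alpha^m(u)$ being right-coprime propagates correctly through the twisting by $\alpha$; I would reduce the general coprimality $uA+\alpha^m(u)A=A$ to the single-step statement and iterate, using $\alpha$-simplicity of $A$ (condition (i)) to control the $\alpha$-ideals that arise. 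The outer-automorphism condition (ii) enters only through Theorem~\ref{wellk}, so that part is routine; the genuine work is the interplay between (iv), the normality of $u$, and the grading.
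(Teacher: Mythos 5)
Your proposal is correct, and on the forward direction it is essentially the paper's argument: regularity of $u$ extracted first from simplicity, then $T_\Y\cong A[Y^{\pm 1};\alpha]$ is simple and Theorem~\ref{wellk} gives (i) and (ii), and (iv) is obtained by building the right ideal $I=MT+XT+Y^mT$ from a maximal right ideal $M$ containing $u$ and $\alpha^m(u)$, using the $\Z$-grading and Remark~\ref{Trmks}(i) to see that $I$ is proper and that $\ann_T(T/I)$ is a non-zero proper ideal. (One small repair to your sketch of (iii): a relation $ua=0$ does not by itself hand you a two-sided ideal; the paper first shows $X$ and $Y$ are regular, because $\{t\in T:tY^n=0\text{ for some }n\}$ and its left-sided analogue are proper ideals of $T$, and then $au=0\Rightarrow aXY=0\Rightarrow a=0$, while $ua=0\Rightarrow YX\alpha(a)=0\Rightarrow a=0$.) Where you genuinely differ is the converse. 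The paper, after getting $Y^n\in J$ from Lemma~\ref{yIy}, places both products $X^nY^n=u\alpha^{-1}(u)\cdots\alpha^{1-n}(u)$ and $Y^nX^n=\alpha(u)\cdots\alpha^n(u)$ in $J\cap A$, takes a maximal ideal $N\supseteq J\cap A$, and uses primeness of $N$ together with the normality of the $\alpha^i(u)$ (Lemma~\ref{alphacomm}) to find $\alpha^s(u),\alpha^t(u)\in N$ with $s\leq 0<t$; then $\alpha^{-s}(N)\supseteq uA+\alpha^{t-s}(u)A=A$, contradicting (iv). Your descent on the minimal $s$ with $Y^s\in I$ also works, but note that the correct mechanism is not the full product $X^sY^s\in I\cap A$ that you cite; it is one multiplication on each side: $XY^s=uY^{s-1}\in I$ and $Y^sX=Y^{s-1}\alpha(u)=\alpha^{s}(u)Y^{s-1}\in I$, after which (iv) with $m=s$ gives $1=ua+\alpha^s(u)b$ and hence
\[Y^{s-1}=(uY^{s-1})\,\alpha^{-(s-1)}(a)+(\alpha^{s}(u)Y^{s-1})\,\alpha^{-(s-1)}(b)\in I,\]
contradicting minimality. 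This descent is more elementary than the paper's route --- it needs neither maximal ideals nor the primeness argument for products of normal elements --- and it runs exactly parallel to the descents in Lemmas~\ref{Rsimlocsim} and \ref{Ssimlocsim}, with the comaximality (iv) playing the role that invertibility of $v^{(m)}$ plays there; the paper's argument, inherited from \cite{primitive} and \cite{vlad5}, instead isolates the failure of (iv) inside a single maximal ideal, which makes the meaning of hypothesis (iv) as a coprimality condition on the $\alpha$-orbit of $u$ more visible, at the cost of the extra normality and primeness bookkeeping.
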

\begin{proof}
Suppose that $T$ is simple. As $\{t\in T:tY^n=0\text{ for some }n\geq 1\}$ and $\{t\in T:Y^nt=0\text{ for some }n\geq 1\}$ are proper ideals of $T$, $Y$ is regular in $T$. Similarly $X$ is regular in $T$. If $a\in A$ is such that if $au=0$ then $aXY=0$, whence $a=0$.
On the other side, if  $ua=0$ then  $\alpha(u)\alpha(a)=0$, $YX\alpha(a)=0$, $\alpha(a)=0$ and $a=0$. Hence $u$ is
regular in $A$ and $T_\Y\simeq A[Y^{\pm 1};\alpha]$, which is
simple by \cite[Proposition 10.17(a)]{GW}. By Theorem~\ref{wellk}, $A$ is $\alpha$-simple and
$\alpha^m$ is outer for all $m\geq 1$. Thus (i), (ii) and (iii) hold.

Suppose that (iv) does not hold. Then
there exist $m\geq 1$ and a maximal right ideal $M$ of $A$ with
$u,\alpha^{m}(u)\in M$. Let $I$ denote the right ideal
$MT+XT+Y^{m}T$. Let $t\in I$. Then $XY=u\in M$ and, by Remark ~\ref{Trmks}(i), $Y^mX^m\in M$ so, in the $\Z$-grading on $T$, $t_0\in M+XYA+Y^mX^mA\subseteq M$. Thus $I$ is
proper and $\ann_{T}(T/I)$ is a proper ideal of $T$. Also, $X^iAY^m\subseteq I$ for $i\geq 1$  and $Y^iAY^m\subseteq I$ for $i\geq 0$,
so $TY^m\subseteq I$, $Y^{m}\in\ann_{T}(T/I)$ and $\ann_{T}(T/I)\neq 0$, contradicting the
simplicity of $T$. Thus (i)--(iv) hold.

Conversely, suppose that (i)--(iv) hold. As $u$ is regular in $A$, $Y$
is regular in $T$ and $T_{\mathcal Y}\cong A[Y^{\pm 1};\alpha]$. By (i),
(ii) and Theorem~\ref{wellk}, $T_{\mathcal Y}$ is simple.  Suppose
that $T$ has a non-zero proper ideal $J$. By
Lemma~\ref{yIy},
$Y^n\in J$ for some $n\geq 1$.  Then $X^nY^n\in J$ and $Y^nX^n\in
J$, that is, by Remark ~\ref{Trmks}(i),
$u\alpha^{-1}(u)\ldots\alpha^{1-n}(u)\in J$ and
$\alpha(u)\alpha^{2}(u)\ldots\alpha^n(u)\in J$. Let $N$ be a
maximal ideal of $A$ containing $J\cap A$.  By Lemma
~\ref{alphacomm}, each $\alpha^{i}(u)$ is normal in $A$ so there
are integers $s$ and $t$ such that $s<t$,
$\alpha^s(u)\in N$ and $\alpha^t(u)\in N.$ Setting $m=t-s>0$ and
$M=\alpha^{-s}(N)$, we see that $M\supseteq uA+\alpha^m(u)A=A$,
contradicting (iv). Therefore $T$ is simple.
\end{proof}

\begin{rmk}
Every ambiskew polynomial ring $R(A,\alpha,v,\rho]$ is isomorphic
to a generalized Weyl algebra over $A[w;\gamma]$, where $w$ and $\gamma$ are as in Notation \ref{w}. Thus
  $xy=w$ and $yx=\rho^{-1}(w-v)$. See \cite[2.6 Corollary]{jw} for the case where $A$ is commutative.
This gives an alternative approach to the simplicity criteria in Section 3 by means of the above Theorem and the passage, via Lemmas~\ref{lmwsiu} and \ref{lmwsiup}, between $A[w;\gamma]$ and $A$.
\end{rmk}

\section{Examples}\label{Examples}

\begin{example}[{\bf Ambiskew polynomial rings over $\F$}]\label{fromk}
If, in the definition of ambiskew polynomial ring in \ref{defambi}, we take $A=\F$ then we obtain an $\F$-algebra $R$ generated by $x$ and $y$ subject to the relation
$xy-\rho yx=v$, for some $v,\rho\in \F$ with $\rho\neq 0$. (The case $\rho=0$ can be included by relaxing the condition that $\alpha$ should be an automorphism, see \cite{ambi}.) If $v\neq 0$ then, replacing $x$ by $v^{-1}x$, we can assume
 that $v=1$. To fit with existing literature, we write $q$ for $\rho$. Apart from the commutative polynomial algebra, there are essentially three cases:
 \[{\rm (i)}\; xy=qyx, q\neq 1;\quad {\rm (ii)}\; xy-yx=1;\quad {\rm (iii)}\; xy-qyx=1, q\neq 1.\]
 These correspond to the {\it co-ordinate ring of the quantum plane}, the {\it first Weyl algebra} $A_1(\F)$, and the
 {\it quantized Weyl algebra} $A_1^q(\F)$ respectively. Simplicity issues for these algebras are well-understood, for example see \cite[Corollary 1.18]{GW} or \cite[Example 1.8.7(ii) with $n=1$]{McCR} for (i), \cite[Corollary 2.2 and the passage following it]{GW} for (ii)
 and \cite[8.4 and 8.5]{g} for (iii).
 However
 they illustrate Theorems~\ref{r-thm0}, \ref{r-thmp}, \ref{s-thm} and \ref{t-thm} nicely.

 In (i), $R$ is conformal with splitting element $0$ and Casimir element $z=xy$ so, by Theorems~\ref{r-thm0}, \ref{r-thmp} and \ref{t-thm}, $R$ and $R/zR$ are not simple. Also statements (i) and (iii) of Theorem~\ref{s-thm} are true. The localization $S=R_\mathcal{Z}$ is the quantized co-ordinate ring $\mathcal{O}_q(T_2)$ of the $2$-torus, that is $\F[x^{\pm 1},y^{\pm 1}; xy=qyx]$. For $0\neq c\in \F$, $\alpha(c)=c=\gamma(c)$
 so $c$ cannot be $(m,j)$-special if $m\neq0\neq j$ and $q$ is not a root of unity. If $q$ is an $m$th root of unity then $1$ is $(m,m)$-special. By Theorem \ref{s-thm}, $S$ is simple if and only if $q$ is not a root of unity.

 In (iii), $R$ is again conformal, with splitting element $(1-q)^{-1}$ and Casimir element $z:=xy-(1-q)^{-1}$, and $\alpha=\id_\F$ is inner. By Theorems~\ref{r-thm0}, \ref{r-thmp} and \ref{t-thm}, $R$ and $R/zR$ are not simple. As in (i),
 there is an $(m,j)$-special element with $m\neq 0\neq j$ if and only if $q$ is a root of unity. Also $u=(1-q)^{-1}$ and $v^{(m)}=\qnum{m}{q}$, where, for $m\in \N$ and $q\in \F^*$, $\qnum{m}{q}:=1+q+q^2+\ldots+q^{m-1}$, which, if $q\neq 1$, is $\dfrac{q^m-1}{q-1}$. Thus if $q$ is not a root of unity then $v^{(m)}$ is a unit for all $m\geq 1$. By Theorem \ref{s-thm}, $R_\mathcal{Z}$ is simple if and only if $q$ is not a root of unity.

 In (ii), where $\rho=1=v$, $v^{(m)}=m$ for $m\geq 1$, and $\alpha=\id_\F$, $u-\rho\alpha(u)=0\neq v$ for all $u\in \F$ so $R$ is singular. If $\chr \F=0$ then $v$ is a unit so $R$ is simple by Theorem~\ref{r-thm0}.
In characteristic $p>0$,  both conditions (ii) and (iii) of Theorem~\ref{r-thmp} fail. To see the failure of
 (ii), take $a=1$, $n=1$ and $b_0=-1$, so that the element $h$ in the proof of Lemma~\ref{lmwsiup} is $w^p$, and for (iii), note that $v^{(p)}=0$.
\end{example}

\begin{example}
Consider $\C$ as an $\R$-algebra and let $\alpha:\C\rightarrow \C$ be the $\R$-automorphism of $\C$ given by complex conjugation. Thus we can form $R=R(\C,\alpha,a+ib,\rho)$ for any $a,b,\rho\in \R$ with $\rho\neq 0$. As an $\R$-algebra, $R$ is generated by $i,x$ and $y$ subject to the relations
\[i^2=-1,\quad xi=-ix,\quad yi=-iy,\quad xy-\rho yx=a+ib.\]
It is readily checked that $R$ is singular if and only if either $\rho=1$ and $a\neq 0$ or $\rho=-1$ and $b\neq 0$. Also,
for $m\geq 1$,
\[v^{(m)}=\begin{cases}
am\text{ if }m\text{ is even}\text{ and } \rho=1,\\
am+ib\text{ if }m\text{ is odd}\text{ and } \rho=1,\\
ibm\text{ if }m\text{ is even}\text{ and } \rho=-1,\\
a+ibm\text{ if }m\text{ is odd}\text{ and } \rho=-1.
\end{cases}\]
As $\C$ is $\alpha$-simple, it follows from Theorem~\ref{r-thm0} that $R$ is simple if and only if
either $\rho=1$ and $a\neq 0$ or $\rho=-1$ and $b\neq 0$. In particular $R(\C,\alpha,1,1)$, in which
$xy-yx=1$, and $R(\C,\alpha,i,-1)$, in which
$xy+yx=i$, are simple.

Similar examples can be constructed for any quadratic field extension $K$ of a field $\F$ with $\chr \F=0$,
for example $K=\Q(\sqrt{d})$ and $\F=\Q$ with $d\in \Z$ square-free.

\end{example}

\begin{example}[{\bf Quantum tori}]
Let $n$ be a positive integer and, for $1\leq i,j\leq n$, let $Q=(q_{i,j})$ be an $n\times n$ matrix over $\F$
such that $q_{i,j}\neq 0$ for all $i,j$, $q_{i,i}=1$ for all $i$ and $q_{j,i}=q_{i,j}^{-1}$ whenever
$j\neq i$. The {\it quantized co-ordinate ring} $\mathcal{O}_Q(T_n)$ {\it of the} $n$-{\it torus} is the $\F$-algebra
generated by $x_1, x_2,\ldots,x_n$ and their inverses subject to the relations
$x_ix_j=q_{i,j}x_jx_i$ for $1\leq i<j\leq n$. When $n=2$ this is the algebra $\mathcal{O}_q(T_2)$ from \ref{fromk}, with $q=q_{1,2}$. Thus it is the localization of a conformal ambiskew polynomial ring at the powers of the Casimir element. To obtain
$\mathcal{O}_Q(T_3)$ in a similar way, take $A=\F[x_1^{\pm 1}]$, $\alpha(x_1)=q_{3,1}x_1$, $\gamma(x_1)=q_{2,1}q_{3,1}x_1$, $\beta(x_1)=q_{2,1}x_1$, $v=0$ and $\rho=q_{2,3}$, setting $y=x_3, x=x_2$ and inverting $xy$. For $n\geq 4$, $\mathcal{O}_Q(T_n)$ can be constructed, inductively, by forming $R(\mathcal{O}_Q(T_{n-2}),\alpha,0,q_{n-1,n})$, with $\alpha(x_i)=q_{n,i}x_i$, $\gamma(x_i)=q_{n-1,i}q_{n,i}x_i$,
$\beta(x_i)=q_{n-1,i}x_i$, $y=x_n$ and $x=x_{n-1}$ and inverting the Casimir element $xy$.

McConnell and Pettit \cite[Proposition 1.3]{McCP} have shown that $\mathcal{O}_Q(T_n)$ is simple if and only if $Q$ has the property ($*$) that if
$m_1,m_2,\ldots,m_n\in \Z$ are such that
$q_{1,i}^{m_1}q_{2,i}^{m_2}\ldots q_{n,i}^{m_n}=1$ for $1\leq i\leq n$, then $m_1=m_2=\ldots=m_n=0$.  Although Theorem~\ref{s-thm} does not give a quicker method of proof of this simplicity criterion, it is instructive to interpret the conditions of that theorem in this example. As $u=0$, the statement in Theorem~\ref{s-thm}(iii) is always true here.  Statement (i) of Theorem~\ref{s-thm}, that $\mathcal{O}_Q(T_{n-2})$ is $\{\alpha,\gamma\}$-simple, is equivalent to the special case of  ($*$) where $m_{n-1}=0=m_n$, that is, if $m_1,m_2,\ldots,m_{n-2}\in \Z$ are such that
$q_{1,i}^{m_1}q_{2,i}^{m_2}\ldots q_{n-2,i}^{m_{n-2}}=1$ for $1\leq j\leq n$ then $m_1=m_2=\ldots=m_{n-2}=0$.
If this holds then any $(m,j)$-special element $c$ must be invertible and therefore of the form $\mu x_1^{m_1}x_2^{m_2}\ldots x_{n-2}^{m_{n-2}}$ for some
$\mu \in \F^*$ and some $m_1, m_2,\ldots,m_{n-2}\in \Z.$ Such an element is $(m,j)$-special if and only if
 \[q_{1,i}^{m_1}q_{2,i}^{m_2}\ldots q_{n-2,i}^{m_{n-2}}q_{n-1,i}^jq_{n,i}^{j-m}=1\]
for $1\leq i\leq n$. The cases where $1\leq i\leq n-2$ correspond to the condition that $c$ is $\alpha^m\gamma^{-j}$-normal and the cases where $i=n-1, n$ correspond to the conditions that $\gamma(c)=q_{n-1,n}^mc$ and $\alpha(c)=q_{n-1,n}^jc$. It follows that the statements in Theorem~\ref{s-thm}(i) and (ii) are consequences of
of ($*$). Moreover if (i) and (ii) hold and $m_1,m_2,\ldots,m_n\in \Z$ are such that
$q_{1,i}^{m_1}q_{2,i}^{m_2}\ldots q_{n,i}^{m_n}=1$ for $1\leq i\leq n$ then $m_n=m_{n-1}=0$ by (ii) and
$m_1=m_2=\ldots=m_{n-2}=0$ by (i).

  Note that if $\mathcal{O}_Q(T_n)$ is simple then, although $\mathcal{O}_Q(T_{n-2})$ must be $\{\alpha,\gamma\}$-simple, it need not be simple. For an example, take $n=4$ and
\[Q=\begin{pmatrix} 1&1&2&3\\
                    1&1&5&7\\
                    2^{-1}&5^{-1}&1&11\\
                    3^{-1}&7^{-1}&11^{-1}&1\\
\end{pmatrix},\]
where $\mathcal{O}_Q(T_{2})$ is a commutative Laurent polynomial ring in two indeterminates.
\end{example}

\begin{example}[{\bf Higher Weyl algebras and quantized Weyl algebras}]
Let $n\geq 1$, let $\q=(q_{1},\ldots,q_{n})$ be an $n$-tuple of elements
of $\F$, and let $\Lambda=(\lambda_{i,j})$ be a $n\times n$ matrix over $\F$, with non-zero entries, such that
$\lambda_{j,i}=\lambda_{i,j}^{-1}$ for $1\leq i,j\leq n$ and $\lambda_{i,i}=1$ for $1\leq i\leq n$.
For $1\leq i<j\leq n$, let
$\mu_{i,j}=q_{i}\lambda_{i,j}$ and $\mu_{j,i}=\mu_{i,j}^{-1}=q_{i}^{-1}\lambda_{j,i}$.
 The $n${\it th quantized
Weyl algebra} $A_n^{\Lambda,\q}$ given by these data is the
$\F$-algebra generated by
$y_1,x_1,\ldots,y_n,x_n$ subject to the relations
\begin{eqnarray*}
y_{j}y_{i}&=&\lambda_{j,i}y_{i}y_{j},\quad 1\leq i<j\leq n;\\
y_{j}x_{i}&=&\lambda_{i,j}x_{i}y_{j},\quad 1\leq i<j\leq n;\\
x_{j}y_{i}&=&\mu_{i,j}y_{i}x_{j},\quad 1\leq i<j\leq n;\\
x_{j}x_{i}&=&\mu_{j,i}x_{i}x_{j},\quad 1\leq i<j\leq n;\\
x_{j}y_{j}-q_{j}y_{j}x_{j}&=&v_{j-1},\quad 1\leq j\leq n.
\end{eqnarray*}
where
\[
v_0=1,\text{ and, for }2\leq j\leq n+1\text{, }v_{j-1}=1+\sum_{i=1}^{j-1}(q_{i}-1)y_{i}x_{i}.\]

In the case where $q_i\neq 1$ for $1\leq i\leq n$, it is shown in \cite[2.8]{qweyl} that $A_n^{\Lambda,\q}$ is obtained from $\F$ by $n$ iterations of the ambiskew polynomial ring construction, with
$A_n^{\Lambda,\q}=A(A_{n-1}^{\Lambda,\q},\alpha_n,v_{n-1},q_n)$, where
for $1\leq i\leq n-1$,
\[\alpha_n(y_{i})=\lambda_{n,i}y_{i}\text{ and }
                        \alpha_n(x_{i})=\lambda_{i,n}x_{i}.\]
In this case, each step is conformal and $v_{i-1}$ is a scalar multiple of the Casimir element of $A_{i-1}^{\Lambda,\q}$.
The normal elements $v_0,v_1,\ldots,v_{n}$ of $A_n^{\Lambda,\q}$ satisfy the equations
\[
v_jy_{i}=\begin{cases}
q_{i}y_{i}v_j\mbox{ if }i<j\\
y_{i}v_j\mbox{ if }i\geq j,\end{cases}\text{ and }
v_jx_{i}=\begin{cases}
q_{i}^{-1}x_{i}v_j \mbox{ if }i<j\\
x_{i}v_j\mbox{ if }i\geq j\end{cases}
\]
for $1\leq i,j\leq n$. Hence $v_iv_j=v_jv_i$ for $0\leq i,j\leq n$.

If $q_i=1$ for some $i$ then the construction remains valid
but the step from $A_{i-1}^{\Lambda,\q}$ to $A_i^{\Lambda,\q}$ is singular. To see this, note that $A_n^{\Lambda,\q}$
has a $\Z$-grading such that $\deg y_i=1$ and $\deg x_i=-1$ for $1\leq i\leq n$, that each $v_j$ is homogeneous of degree $0$, that $\alpha_n$ is $\Z$-graded and that $\alpha(a)=a$ for homogeneous elements of degree $0$. It follows that the degree $0$ component of $u-\alpha(u)$ is $0$ for all $u\in A_{i-1}^{\Lambda,\q}$ whereas the degree $0$ component of $v_i$ is $v_i$ itself. Note also that, in this case $v_{i}=v_{i-1}$ and,
for each $j\neq i$,
$\mu_{i,j}=\lambda_{i,j}$. In particular, if $q_i=1$ for all $i$ then each $v_i=1$ and the relations are
\begin{eqnarray*}
y_{j}y_{i}&=&\lambda_{j,i}y_{i}y_{j},\quad 1\leq i<j\leq n;\\
y_{j}x_{i}&=&\lambda_{i,j}x_{i}y_{j},\quad 1\leq i<j\leq n;\\
x_{j}y_{i}&=&\lambda_{i,j}y_{i}x_{j},\quad 1\leq i<j\leq n;\\
x_{j}x_{i}&=&\lambda_{j,i}x_{i}x_{j},\quad 1\leq i<j\leq n;\\
x_{j}y_{j}-y_{j}x_{j}&=&1,\quad 1\leq j\leq n.
\end{eqnarray*}
In this case we write $A_n^{\Lambda,\q}$ as $A_n^{\Lambda}$. If each $\lambda_{i,j}=1$ then $A_n^{\Lambda}$
is the usual $n${\it th Weyl algebra} $A_n(\F)$. In \cite{qweyl}, where it was assumed that $q_i\neq 1$ for all $i$, it was shown that, in all characteristics, the localization of $A_{n}^{\Lambda,\q}$ at the multiplicatively closed set $\mathcal{V}$ generated by the $n$ normal elements
$v_1,v_2,\ldots,v_{n}$ is simple provided no $q_i$ is a root of unity. The companion result for the case where $\ch \F=0$ and each $q_i=1$ is the following generalization of the simplicity of the usual Weyl algebras.

\begin{theorem}\label{Asimple} If $\ch \F=0$ then  $A_n^{\Lambda}$ is simple for all $\Lambda$.
\end{theorem}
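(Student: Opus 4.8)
The plan is to induct on $n$, applying the characteristic-zero criterion of Theorem~\ref{r-thm0} to the single ambiskew step $A_n^\Lambda=R(A_{n-1}^\Lambda,\alpha_n,1,1)$; here $\rho=q_n=1$, $v=v_{n-1}=1$, and $\alpha_n(y_i)=\lambda_{n,i}y_i$, $\alpha_n(x_i)=\lambda_{i,n}x_i$ for $1\le i\le n-1$, this iterated presentation being the one recorded in the discussion preceding the theorem. The base case $n=1$ is the first Weyl algebra $A_1(\F)$, whose simplicity in characteristic $0$ was noted in Example~\ref{fromk}(ii) and also follows immediately from Theorem~\ref{r-thm0} with $A=\F$, $\alpha=\id$, $v=\rho=1$.

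For the inductive step I would assume that $A_{n-1}^\Lambda$ is simple for every $(n-1)\times(n-1)$ parameter matrix and verify conditions (i)--(iii) of Theorem~\ref{r-thm0}. Condition (i), that $A_{n-1}^\Lambda$ is $\alpha_n$-simple, is immediate from the inductive hypothesis, since a simple ring has no proper non-zero ideals at all, hence none that are $\alpha_n$-invariant. Condition (iii) is a one-line computation: as $\rho=1$ and $\alpha_n(v)=\alpha_n(1)=1$, we get $v^{(m)}=\sum_{l=0}^{m-1}\rho^l\alpha_n^l(v)=m\cdot 1$, which is a unit of $A_{n-1}^\Lambda$ exactly because $\chr\F=0$ makes $m\ne 0$ in $\F$. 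This is precisely the mechanism that fails in characteristic $p$, where $v^{(p)}=0$.

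The one condition demanding real care --- and the crux of the proof --- is (ii), the singularity of $(A_{n-1}^\Lambda,\alpha_n,1,1)$: I must rule out a splitting element, i.e.\ an element $u$ with $u-\alpha_n(u)=1$. I would argue via the $\Z^{n-1}$-grading of $A_{n-1}^\Lambda$ with $\deg y_i=e_i$ and $\deg x_i=-e_i$. All the defining relations, including $x_iy_i-y_ix_i=1$, are homogeneous for this grading, $\alpha_n$ is graded, and on a component of multidegree $\mathbf d$ it acts as the scalar $\prod_k\lambda_{n,k}^{d_k}$, which equals $1$ when $\mathbf d=0$. Hence the multidegree-$0$ component of $u-\alpha_n(u)$ vanishes for every $u$, while $v=1$ is a non-zero element of multidegree $0$; so $u-\alpha_n(u)=1$ has no solution and the step is singular, as already asserted before the theorem. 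The essential subtlety is that one must use the \emph{full} multigrading rather than the coarser total-degree grading: $\alpha_n$ does not fix every total-degree-$0$ element --- for instance $\alpha_n(y_1x_2)=\lambda_{n,1}\lambda_{n,2}^{-1}y_1x_2$ --- so it is the finer grading that makes the argument go through. With (i)--(iii) verified, Theorem~\ref{r-thm0} gives that $A_n^\Lambda$ is simple, completing the induction.
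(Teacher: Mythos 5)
Your proof is correct and follows the paper's strategy in outline: induction on $n$, applying Theorem~\ref{r-thm0} to the step $A_n^{\Lambda}=R(A_{n-1}^{\Lambda},\alpha_n,1,1)$, with conditions (i) and (iii) immediate and all the substance in the singularity condition (ii). Where you differ from the paper is precisely at that crux, and your version is the one that actually works. The paper's proof appeals to the discussion preceding the theorem, which uses the coarse $\Z$-grading ($\deg y_i=1$, $\deg x_i=-1$) and asserts that $\alpha_n(a)=a$ for every homogeneous element of degree $0$. As your own example shows, this assertion fails as soon as $n-1\geq 2$ and $\lambda_{n,1}\neq\lambda_{n,2}$: the element $y_1x_2$ has total degree $0$, yet $\alpha_n(y_1x_2)=\lambda_{n,1}\lambda_{n,2}^{-1}y_1x_2$. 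Thus with the coarse grading the degree-$0$ component of $u-\alpha_n(u)$ need not vanish, and the paper's argument as written does not exclude a splitting element. Your replacement of the total degree by the $\Z^{n-1}$-multigrading, on whose multidegree-$\mathbf{0}$ component $\alpha_n$ genuinely acts as the identity (the scalar $\prod_k\lambda_{n,k}^{d_k}$ is $1$ when $\mathbf{d}=\mathbf{0}$), is exactly the repair needed, after which the rest of the argument runs as in the paper. So you have not merely reproduced the paper's proof but corrected it at its one delicate point. (The paper also sketches a second route via Theorem~\ref{simpleit} with $\rho=\mu=1$; there a splitting element in the extension is excluded by projecting onto the bidegree-$(0,0)$ component over the base ring, an inductive reduction that avoids the grading issue altogether and would serve equally well.)
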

\begin{proof}
This follows inductively from Theorem~\ref{r-thm0}. At each step, $v=1$ and $v^{(m)}=m$  and, as observed above, the ambiskew extension is singular.
Alternatively the induction may be carried out using Theorem~\ref{simpleit}, with $\rho=\mu=1$.
\end{proof}

\begin{rmk}
The algebras $A_n^{\Lambda}$ can be interpreted in terms of quantum differential operators in the sense of Lunts and Rosenberg \cite{LR}, see \cite{IJM}.
\end{rmk}

Next we consider hybrid cases where some, but not all, of the parameters $q_i$ are $1$.
\begin{notn}
With $\q$, $\Lambda$ and $\mathcal{V}$ as above, let $B_{n}^{\Lambda,\q}$ denote the localization of
$A_{n}^{\Lambda,\q}$ at $\mathcal V$.
In particular, if $q_i=1$ for all $i$ then $B_{n}^{\Lambda,\q}=A_{n}^{\Lambda}$. As $\alpha_n(v_i)=v_i$ for
$0\leq i\leq n-1$, the automorphism $\alpha_n$
of $A_{n-1}^{\Lambda,\q}$ extends to an automorphism of $B_{n-1}^{\Lambda,\q}$ and, by \cite[Lemma 1.4]{g}, localization at the powers of each $v_i$ commutes with each of the subsequent skew polynomial extensions. In particular, if $q_n\neq 1$ then
$B_{n}^{\Lambda,\q}$ is the localization of  $R(B_{n-1}^{\Lambda,\q},\alpha_n,v_{n-1},q_n)$ at the powers of $v_{n}$.
If $q_n=1$ then
$B_{n}^{\Lambda,\q}=R(B_{n-1}^{\Lambda,\q},\alpha_n,v_{n-1},1)$.
\end{notn}

The following Lemma will be used to keep track of the units through the various localizations in the construction of $B_{n}^{\Lambda,\q}$.
\begin{lemma}\label{units} Let $B$ be a right Noetherian ring with a regular normal element $v$ such that $vB$ is a completely prime ideal and let $\mathcal{U}=\{uv^j:u\in U(B),j\in\Z\}$. Let $C$ be the localization of $B$ at the right and left Ore set $\{v^i:i\geq 0\}$. Then $U(C)=\mathcal{U}$.
\end{lemma}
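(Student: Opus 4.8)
The plan is to prove the set equality $U(C)=\mathcal{U}$ by establishing the two inclusions separately, with the reverse inclusion carried by an induction that strips powers of $v$ off a unit using the completely prime hypothesis. Two preliminary observations set the stage. First, since $v$ is regular and normal there is a ring automorphism $\sigma$ of $B$ determined by $vb=\sigma(b)v$ for all $b\in B$, and $\sigma$ restricts to a permutation of $U(B)$; moreover $\sigma$ extends to $C$. Second, because $vB$ is completely prime, $\overline{B}:=B/vB$ is a domain; I will write $\overline{b}$ for the image of $b\in B$. The inclusion $\mathcal{U}\subseteq U(C)$ is then immediate: $v$ is a unit of $C$ by construction of the localization, and each $u\in U(B)$ remains a unit in $C$, so every $uv^j$ is a unit of $C$.

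For the reverse inclusion I would first reduce to units that already lie in $B$. As $\{v^i\}$ is a right Ore set of powers of the normal element $v$, every element of $C$ is a right fraction $bv^{-n}$ with $b\in B$ and $n\geq 0$. If $c=bv^{-n}\in U(C)$, then $b=cv^{n}$ is a product of units of $C$, hence a unit of $C$ lying in $B$, and clearly $c\in\mathcal{U}$ if and only if $b\in\mathcal{U}$. So it suffices to show that $b\in B\cap U(C)$ implies $b\in\mathcal{U}$.

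The heart of the argument is an induction on the least integer $m\geq 0$ with $b^{-1}v^{m}\in B$; put $b':=b^{-1}v^{m}\in B$, so that $bb'=v^{m}$. If $m=0$ then $b'=b^{-1}\in B$ is a two-sided inverse, so $b\in U(B)\subseteq\mathcal{U}$. If $m\geq 1$, minimality of $m$ forces $b^{-1}v^{m-1}=b'v^{-1}\notin B$, i.e.\ $b'\notin vB$, so $\overline{b'}\neq 0$; reducing $bb'=v^{m}$ modulo $vB$ gives $\overline{b}\,\overline{b'}=0$, and since $\overline{B}$ is a domain we conclude $\overline{b}=0$, that is $b=vb_{1}$ for some $b_{1}\in B$. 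Now $b_{1}=v^{-1}b$ is again a unit of $C$, and $b_{1}^{-1}v^{k}=b^{-1}v^{k+1}\in B$ exactly when $k\geq m-1$, so the least exponent attached to $b_{1}$ is $m-1$; by the inductive hypothesis $b_{1}=uv^{j}\in\mathcal{U}$, whence $b=vb_{1}=\sigma(u)v^{j+1}\in\mathcal{U}$ because $\sigma(u)\in U(B)$. This closes the induction.

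The main obstacle, and the point that the whole argument is organised around, is ensuring that the completely prime hypothesis genuinely forces $\overline{b}=0$ rather than leaving an undecided alternative $\overline{b}=0$ or $\overline{b'}=0$; choosing $m$ minimal and reading off the $v$-order of $b'$ is exactly what removes that ambiguity. I would also flag that the normality of $v$ is used twice in an essential way: once so that fractions may be taken on a single side (making the reduction in the second paragraph legitimate), and once at the very end to move $v$ past the unit $u$ via $\sigma$.
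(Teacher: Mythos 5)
Your proof is correct, and while it shares the basic mechanism with the paper's proof (peeling factors of $v$ off via complete primeness, with an induction on an exponent), it is organised differently in two ways worth noting. The paper argues by contradiction with a minimal counterexample among factorizations $ab=v^{\ell}$ in which neither $a$ nor $b$ lies in $\mathcal{U}\cap B$: complete primeness gives the dichotomy $a\in vB$ or $b\in vB$, both branches must be treated, and the terminal case $\ell=1$ produces only a \emph{one-sided} inverse $a'b=1$, which the paper upgrades to a two-sided one by citing \cite[Corollary 4.25]{GW} --- this is precisely where the right Noetherian hypothesis enters. You instead induct on the least $m$ with $b^{-1}v^{m}\in B$, carrying the two-sided inverse from $C$ along: minimality forces the cofactor $b'=b^{-1}v^{m}$ to lie outside $vB$, so the domain $B/vB$ resolves the dichotomy in one stroke ($\overline{b}=0$, no case split), and your base case $m=0$ hands you a genuine two-sided inverse in $B$. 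The upshot is that your argument never uses the Noetherian hypothesis at all, so it proves a marginally more general statement; what the paper's formulation buys in exchange is that it works with the single normality fact $v\mathcal{U}'\subseteq\mathcal{U}'$ rather than introducing the automorphism $\sigma$ with $vb=\sigma(b)v$, though these are of course the same fact in different clothing. One cosmetic caveat: your preliminary claim that $B/vB$ is a domain presupposes $vB\neq B$ (i.e.\ $v\notin U(B)$); this is harmless, both because a completely prime ideal is by definition proper and because when $v$ is a unit your induction never leaves the base case, but it deserves the one-line remark the paper gives it.
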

\begin{proof} The result is trivial if $v$ is a unit in $B$ so we can assume that $v\notin U(B)$. Let
$\mathcal{U}^\prime=\mathcal{U}\cap B=\{uv^j:u\in U(B),j\geq 0\}$ and note that, by the normality of $v$, $v\mathcal{U}^\prime\subseteq \mathcal{U}^\prime$.
Suppose that the result is false. Then  there exist $a\in B$ and $i\geq 0$ such that $av^{-i}\in U(C)\backslash \mathcal{U}$.  In particular,
$a\notin \mathcal{U}^\prime$, otherwise $av^{-i}$ has the form $uv^{j-i}\in \mathcal{U}$. Also $a\in U(C)$. Let $bv^{-\ell}$ be the inverse of $a$ in $C$ and note that $\ell>0$, otherwise $a\in U(B)\subset \mathcal{U}^\prime$.
Thus $ab=v^\ell$.   Also $b\notin \mathcal{U}^\prime$, for if $b=uv^j$ for some $u\in U(B)$ and $j\geq 0$
then
\[av^{-i}=v^{\ell}b^{-1}v^{-i}=v^{\ell-j}u^{-1}v^{-i}=wv^{\ell-j-i}\in \mathcal{U}\]
  for some $w\in U(B)$.

  Now let $\ell>0$ be minimal such that there exist $a,b\in B$ such that $ab=v^\ell$ but
  $a\notin \mathcal{U}^\prime$ and $b\notin \mathcal{U}^\prime$. As $vB$ is completely prime, either $a\in vB$ or $b\in vB$. Suppose that $a=va^\prime\in vB$. Then $va^\prime b=v^{\ell}$ so $a^\prime b=v^{\ell-1}$. By the minimality of $\ell$, either $\ell=1$, in which case, by \cite[Corollary 4.25]{GW}, $b\in U(B)\subset \mathcal{U}^\prime$, or $a^\prime \in \mathcal{U}^\prime$, in which case $a=va^\prime\in v\mathcal{U}^\prime\subseteq \mathcal{U}^\prime$. Each case gives rise to a contradiction. A similar argument on the other side yields a contradiction if $b\in vB$. Therefore the result is true.
\end{proof}

\begin{theorem}\label{Bsimple}
If $\chr \F=0$ then $B_{n}^{\Lambda,\q}$
is simple if and only if, for all $i$ such that $1\leq i\leq n$, either $q_i=1$ or $q_i$ is not a root of unity.
\end{theorem}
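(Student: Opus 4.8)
The plan is to argue by induction on $n$, uniformly in $\Lambda$ and $\q$, the base case $n=0$ being the simple algebra $B_0^{\Lambda,\q}=\F$. Two structural facts drive everything. Since $\alpha_n$ fixes each $v_i$ with $0\le i\le n-1$ and, by iterated use of Lemma~\ref{units}, every unit of $B_{n-1}^{\Lambda,\q}$ is a scalar multiple of a monomial in the $v_i$, every unit of $B_{n-1}^{\Lambda,\q}$ is fixed by $\alpha_n$. Moreover $v_{n-1}$ is itself a unit of $B_{n-1}^{\Lambda,\q}$, so the automorphism $\gamma_n$ making $v_{n-1}$ normal is the \emph{inner} automorphism $a\mapsto v_{n-1}av_{n-1}^{-1}$; consequently every $\alpha_n$-ideal is automatically a $\{\alpha_n,\gamma_n\}$-ideal and $\{\alpha_n,\gamma_n\}$-simplicity coincides with $\alpha_n$-simplicity. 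When $q_n\ne1$ a direct calculation identifies the splitting element as $u_n=(1-q_n)^{-1}v_{n-1}$, a unit fixed by $\alpha_n$, whence $v^{(m)}=(1-q_n^m)u_n$ for all $m\ge1$.

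For the implication from the parameter condition to simplicity I would assume that for each $i\le n$ either $q_i=1$ or $q_i$ is not a root of unity, so by induction $B_{n-1}^{\Lambda,\q}$ is simple and in particular $\alpha_n$-simple. If $q_n=1$ then $B_n^{\Lambda,\q}=R(B_{n-1}^{\Lambda,\q},\alpha_n,v_{n-1},1)$ and Theorem~\ref{r-thm0} applies: its hypothesis (i) is the $\alpha_n$-simplicity just obtained, (ii) is the singularity of these stages noted in the description of $A_n^{\Lambda,\q}$, and (iii) holds since $v^{(m)}=m\,v_{n-1}$ is a unit in characteristic $0$. If $q_n\ne1$ is not a root of unity then $B_n^{\Lambda,\q}$ is the localization at the Casimir element of $R(B_{n-1}^{\Lambda,\q},\alpha_n,v_{n-1},q_n)$, and as $u_n$ is a unit I would apply Theorem~\ref{s-thm2}: (i) is $\alpha_n$-simplicity, (iii) holds because $v^{(m)}=(1-q_n^m)u_n$ is a unit, and (ii) holds because a $(0,j)$-special element would be a unit, hence $\alpha_n$-fixed, forcing $q_n^j=1$ and so $j=0$. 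In either case $B_n^{\Lambda,\q}$ is simple.

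The converse is the crux and the step I expect to be hardest. Suppose $q_i$ is a root of unity of order $d>1$ for some $i$ with $1\le i\le n$; I must produce a proper nonzero ideal of $B_n^{\Lambda,\q}$. The tempting candidate, a central element assembled from the Casimir $z_i$, is actually a unit (a monomial in the $v_k$) and so is useless, since a transcendental central unit does not obstruct simplicity. Instead I would use $x_i^{\,d}$. At level $i$ one has $v^{(d)}=v_{i-1}\qnum{d}{q_i}=0$, so \eqref{skewcomm} yields $x_i^{\,d}y_i=y_i x_i^{\,d}$; together with the defining relations this shows that $x_i^{\,d}$ commutes with $z_i$, commutes up to a nonzero scalar with each $y_j$ and $x_j$ for $j\ne i$, commutes with every $v_j$, and satisfies $x_i^{\,d}a=\beta_i^{\,d}(a)x_i^{\,d}$ for $a\in B_{i-1}^{\Lambda,\q}$, where $\beta_i=\alpha_i^{-1}\gamma_i$ fixes $v_{i-1}$. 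Hence $x_i^{\,d}$ is a \emph{normal} element of $B_n^{\Lambda,\q}$, so $x_i^{\,d}B_n^{\Lambda,\q}$ is a two-sided ideal; it is nonzero because $B_n^{\Lambda,\q}$ is a domain, and proper because in the $\Z$-grading with $\deg y_j=1$, $\deg x_j=-1$ the element $x_i^{\,d}$ has degree $-d\ne0$ whereas every unit is a degree-$0$ monomial in the $v_k$. Thus $B_n^{\Lambda,\q}$ is not simple. The delicate points are to verify that $x_i^{\,d}$ normalizes the whole ring—in particular that the twist $\beta_i^{\,d}$ on $B_{i-1}^{\Lambda,\q}$ respects the relation $x_iy_i=q_iy_ix_i+v_{i-1}$—and the grading book-keeping establishing properness.
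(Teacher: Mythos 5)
Your architecture is essentially the paper's own: the same induction on $n$, Theorem~\ref{r-thm0} for the stages with $q_n=1$, Theorem~\ref{s-thm2} for the stages where $q_n$ is not a root of unity (with condition (ii) there killed by the fact, from iterated use of Lemma~\ref{units}, that every unit of $B_{n-1}^{\Lambda,\q}$ is a scalar times a monomial in the $v_i$ and hence fixed by $\alpha_n$), and, for the converse, a normal non-unit power of a generator. Your choice of $x_i^{\,d}$ instead of the paper's $y_j^{\,m}$ is an immaterial mirror image, and the two ``delicate points'' you flag there are not actually delicate: normality of $x_i^{\,d}$ over $B_{i-1}^{\Lambda,\q}$ only requires $x_i^{\,d}a=\beta_i^{\,d}(a)x_i^{\,d}$ with $\beta_i^{\,d}$ an automorphism (no compatibility of the twist with the relation $x_iy_i=q_iy_ix_i+v_{i-1}$ is needed for $x_i^{\,d}B_n^{\Lambda,\q}$ to be a two-sided ideal), and properness follows from the unit description exactly as you say, provided you note that the unit description is needed for $B_n^{\Lambda,\q}$ itself, not just $B_{n-1}^{\Lambda,\q}$.

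The one genuine gap is in the case $q_n=1$. Hypothesis (ii) of Theorem~\ref{r-thm0} must be verified for the $4$-tuple $(B_{n-1}^{\Lambda,\q},\alpha_n,v_{n-1},1)$, i.e.\ you must show that no splitting element exists in the \emph{localized} ring $B_{n-1}^{\Lambda,\q}$. What is ``noted in the description of $A_n^{\Lambda,\q}$'' is singularity of $(A_{n-1}^{\Lambda,\q},\alpha_n,v_{n-1},1)$, and singularity is a non-existence statement, so it does not formally pass to a localization: a splitting element could a priori involve negative powers of the $v_j$. The paper closes exactly this gap: a putative splitting element is written as $uw^{-1}$ with $u\in A_{n-1}^{\Lambda,\q}$ and $w\in\mathcal{V}$; since $\alpha_n(w)=w$, the splitting equation becomes $u-\alpha_n(u)=v_{n-1}w$ inside $A_{n-1}^{\Lambda,\q}$, and the grading argument (the degree-$0$ component of $u-\alpha_n(u)$ vanishes, while $v_{n-1}w$ is a non-zero homogeneous element of degree $0$) gives the contradiction. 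Without this short supplementary argument your application of Theorem~\ref{r-thm0} is unjustified. A smaller gloss of the same kind: ``iterated use of Lemma~\ref{units}'' requires, at each localization, the complete-primeness hypothesis of that lemma, which the paper supplies by observing that each $A_j^{\Lambda,\q}/v_jA_j^{\Lambda,\q}$ is a domain, being a generalized Weyl algebra over the domain $A_{j-1}^{\Lambda,\q}$; you should record this verification as well.
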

\begin{proof}
Suppose that, for $1\leq i\leq n$, either $q_i=1$ or $q_i$ is not a root of unity. The proof that $B_{n}^{\Lambda,\q}$
is simple is by induction. Note that $B_{1}^{\Lambda,\q}$, which is either the usual Weyl algebra or the localization of $A_1^{q_1}$ at the powers of the Casimir element, is simple  from the discussion in
Subsection~\ref{fromk}. Let $n>1$ and suppose that $B_{n-1}^{\Lambda,\q}$ is simple. Let $q=q_n$, $\alpha=\alpha_n$
and $v=v_{n-1}$.

First suppose that $q=1$. Then $B_{n}^{\Lambda,\q}=R(B_{n-1}^{\Lambda,\q},\alpha,v,1)$.  Note that $B_{n-1}^{\Lambda,\q}$ is the localization of $A_{n-1}^{\Lambda,\q}$ at the multiplicatively closed set $\mathcal{V}$ generated by $v_1,v_2,\ldots, v_{n-1}$ and that $\alpha(v_i)=v_i$ for $1\leq i\leq n-1$.
Let $u\in A_{n-1}^{\Lambda,\q}$ and $w\in \mathcal{V}$ be such that $uw^{-1}-\alpha(uw^{-1})=v$.
Then $\alpha(w)=w$ and $u-\alpha(u)=vw$. In the $\Z$-grading of $A_{n-1}^{\Lambda,\q}$, $v$ and $w$ are homogeneous of degree $0$, whereas the component in  degree $0$ of $u-\alpha(u)$ is $0$. Hence $(B_{n-1}^{\Lambda,\q},\alpha,v,1)$ is singular.
Also, for $m\geq 1$,
$v^{(m)}=mv$ which, as  $v$ is already a unit in $B_{n-1}^{\Lambda,\q}$, is a unit.
By Theorem~\ref{r-thm0},  $B_{n}^{\Lambda,\q}$ is simple in this case.

 Now suppose that $q\neq 1$, in which case Theorem~\ref{s-thm2} is applicable with $R=B_{n-1}^{\Lambda,\q}$, which is simple, and $S=B_{n}^{\Lambda,\q}$. Here $v^{(m)}=\qnum{m}{q}v$, which is a unit as $q$ is not a root of unity, so statements (i) and (iii) of Theorem~\ref{s-thm2} both hold. For $1\leq j\leq n-1$, $A_{j}^{\Lambda,\q}/v_jA_{j}^{\Lambda,\q}$ is a domain, being a generalized Weyl algebra over $A_{j-1}^{\Lambda,\q}$. It follows from Lemma~\ref{units} that the group $U(B_{n-1}^{\Lambda,\q})$ is generated by $\F^*$ and $\{v_i^{\pm 1}:1\leq i\leq n-1\}$. Hence $\alpha(c)=c$ for all $c\in
 U(B_{n-1}^{\Lambda,\q})$. For statement (ii), a $(0,-j)$-special element $c$ would be a unit in $B_{n-1}^{\Lambda,\q}$
satisfying $\alpha(c)=q^{j}(c)$, where $j>0$. As $q$ is not a root of unity,  no $(0,-j)$-special element $c$ exists and hence, by Theorem~\ref{s-thm2}, $B_{n}^{\Lambda,\q}$ is simple in this case also.

Conversely, suppose that there exists $j$, $1\leq j\leq n$, such that $q_j\neq 1$ but $q_j^m=1$ for some $m>0$.
Consider $y_j^m$, which is a non-unit both in $A_{n}^{\Lambda,\q}$ and $B_{n}^{\Lambda,\q}$.
If $i\neq j$ then $y_iy_j^m=\lambda_{i,j}^m y_j^my_i$ and $x_iy_j^m=\lambda_{j,i}^m y_j^mx_i$.
Also, by \eqref{skewcomm}, \[x_jy_j^m=q_j^my_j^mx_j+v_j^{(m)}y_j^{m-1}=q_j^my_j^mx_j\] as
$v_{j-1}^{(m)}=\qnum{m}{q_j}v_{j-1}=0$. Hence $y_j^m$ is a normal non-unit in $B_{n}^{\Lambda,\q}$ which can therefore
not be simple.
\end{proof}

\begin{rmk}
A similar result is true for the alternative quantized Weyl algebra $\mathscr{A}_n^{\Lambda,\q}$ studied in \cite{akhav} where the relations are
\begin{eqnarray*}
y_{j}y_{i}&=&\lambda_{j,i}y_{i}y_{j},\quad 1\leq i<j\leq n;\\
y_{j}x_{i}&=&\lambda_{i,j}x_{i}y_{j},\quad 1\leq i<j\leq n;\\
x_{j}y_{i}&=&\lambda_{i,j}y_{i}x_{j},\quad 1\leq i<j\leq n;\\
x_{j}x_{i}&=&\lambda_{j,i}x_{i}x_{j},\quad 1\leq i<j\leq n;\\
x_{j}y_{j}-q_{j}y_{j}x_{j}&=&1,\quad 1\leq j\leq n.
\end{eqnarray*}
When $q_i=1$ for all $i$, $\mathscr{A}_n^{\Lambda,\q}={A}_n^{\Lambda,\q}$.
\end{rmk}
\end{example}
\begin{example}[{\bf A simple ambiskew polynomial ring in characteristic $p$}]
For an example of a simple ambiskew polynomial ring in characteristic $p$, take $A$ to the field of rational functions,
over a field $\F$ of characteristic $p$, in countably many indeterminates $t_i$, $i\in \Z$, and let $\alpha$ be
the $\F$-automorphism of $A$ such that $\alpha(t_i)=t_{i+1}$ for all $i\in \Z$. Let $\rho\in \F^*$ and let
$v\in \F(t_0)\backslash \F$. It is clear that $A$, being a field, is $\alpha$-simple and that, for all $m\geq 1$, $v^{(m)}$ is a unit of $A$. Suppose that condition (ii) of Theorem~\ref{r-thmp} fails due to the existence of $n$, $u$ and $b_i$, $0\leq i\leq n-1$, with the precluded properties. As $\alpha(b_i)=\rho^{p^i-p^n}b_i$, it is clear that each $b_i\in \F$.  Then, for all $n\geq 0$, $v^{p^n}+\sum_{0}^{n-1}b_iv^{p^i}\in \F(t_0)\backslash \F$
whereas, for all $u\in A$ either $u\in \F$, in which case $\rho^{p^n}\alpha(u)-u\in \F$, or
$\rho^{p^n}\alpha(u)-u\notin \F(t_0)$. In each case this contradicts the failure of (ii).
Therefore $R(A,\alpha,v,\rho)$ is simple by Theorem~\ref{r-thmp}.

\end{example}

\begin{example}[{\bf Ambiskew polynomial rings over $\F[t]$ and $\F[t^{\pm 1}]$}]
\label{ALaurent}
Given any $\F$-automorphism $\alpha$ of the polynomial algebra $\F[t]$ the generator
$t$ can be chosen so that $\alpha(t)=t+1$ or $\alpha(t)=\lambda t$ for some $\lambda\in \F^*$.

(i) Suppose that $\alpha(t)=t+1$.
If $\chr \F=p>0$ then  $R(\F[t],\alpha,v,\rho)$ need not be conformal. For example, if $\chr \F=2$ then
$R(\F[t],\alpha,t,1)$ is singular. However $t^p-t$ generates an $\alpha$-ideal so the rings $R=R(\F[t],\alpha,v,\rho)$ and, when they exist,
$R_\mathcal{Z}$ and $R/zR$ are never simple when $\chr \F=p>0$. So we restrict to the case where $\chr \F=0$, in which
$\F[t]$ is $\alpha$-simple and $R(\F[t],\alpha,v,\rho)$ is always conformal with $\deg (u)=\deg (v)+1$ if $\rho=1$
and $\deg (u)=\deg (v)$ if $\rho\neq 1$.
When $\rho=1$ these are the Smith algebras introduced in \cite{psmith}.

By Remark~\ref{rhoone1} and Theorem~\ref{s-thm2}, the localization $S$ is not simple when $\rho$ is a root of unity. If $v\in \F^*$
and $\rho$ is not a root of unity then $S$ is simple by Theorem~\ref{s-thm2}, as there is no  $(0,-j)$-special element with $j\geq 1$
 and $v^{(m)}=\qnum{m}{\rho}v$ is always a unit.

 Finally suppose that $\deg v \geq 1$ and that $\rho$ is not a root of unity. Then $d:=\deg v \geq 1$.
 Let $v_d$ be the coefficient of $t^d$ in $v$. Then the coefficient of $t^d$ in $u$ is $v_d/(1-\rho)$ and, for
 all $m\geq 1$,
 the coefficient of $t^d$ in $v^{(m)}$ is $v_d\qnum{m}{\rho}$. Thus
 $\deg v^{(m)}=d>0$ and each $v^{(m)}$ has an irreducible factor.
 Let $f$ be an irreducible factor in $\F[t]$ of $u$. Then there exists $m\geq 1$ such that $f$ is not a factor of $\alpha^m(u)$. Otherwise
 $u\in I:=\cap_{m\geq 0} \alpha^{-m}(fA)$ and $\alpha(I)\subseteq I$, contradicting the $\alpha$-simplicity of $\F[t]$.
 As $u$ has only finitely many irreducible factors, up to associates,  there exists $M\geq 1$ such that $u$ and $\alpha^M(u)$ are coprime in $\F[t]$. But, by Theorem~\ref{s-thm2}, $u^n\in v^{(M)}\F[t]$ for some $n\geq 0$ so that any irreducible factor of $v^{(M)}=u-\rho^M\alpha^M$ must be a common factor of $u$ and $\alpha^M(u)$. This is a contradiction so $S$ is not simple in this case. Thus $S$ is simple if and only if $v\in \F^*$
and $\rho$ is not a root of unity.

(ii) When
 $\alpha(t)=qt$ for some $q\in \F^*$, $\F[t]$ is clearly not $\alpha$-simple so we will
consider here the ambiskew polynomial rings over the Laurent polynomial ring $\F[t^{\pm 1}]$ determined by $\alpha$. Moreover, we shall assume that $q$
is not a root of unity so that $\F[t^{\pm 1}]$ is $\alpha$-simple. Let $v=\sum_{i=r}^s a_it^i\in \F[t^{\pm 1}]$ and let $\rho\in \F^*$. As $\alpha$ is $\Z$-graded, any splitting element $u$ must have the form $\sum_{i=r}^s b_it^i$, in which case
$u-\rho\alpha(u)=\sum_{i=r}^s(1-q^i\rho) b_it^i$. Therefore $(\F[t^{\pm 1}],\alpha,v,\rho)$ is singular if and only if
there exists $n\in \Z$ such that $a_n\neq 0$ and $\rho=q^{-n}$. For $R(\F[t^{\pm 1}],\alpha,v,\rho)$ to be simple, $v$ must be a unit so set $v=at^n$, where $a\in \F^*$, $0\neq n\in \Z$ and $\rho=q^{-n}$. Without loss of generality, we can, by replacing $x$ by $a^{-1}x$, assume that $a=1$. The relations in $R$
are
\[yt=qty,\; xt=q^{-1}tx\text{ and }xy-q^{-n}yx=t^n.\] Note that $\rho\alpha(v)=v$, so $v^{(m)}=m$ for $m\geq 1$.
By Theorem~\ref{r-thm0}, $R:=R(\F[t^{\pm 1}],\alpha,t^n,q^{-n})$ is simple if $\chr \F=0$.
On the other hand, if
 $\chr \F=p\neq 0$ then $v^{(p)}=0$ and, by Theorem~\ref{r-thm0}, $R$ is not simple.

  \end{example}

\begin{example}[{\bf quantized Heisenberg algebras}]
\label{Heisenberg}
Here we discuss a two-parameter quantization of the enveloping algebra of
the three-dimensional Heisenberg algebra over $\F$, namely
the $\F$-algebra
$U_{q,\rho}$ generated by $t,x$ and $y$ subject to the relations
\[yt=qty,\quad xt=q^{-1}tx, \quad xy-\rho yx=t\]
and its localization $U^\prime_{q,\rho}$ at the powers of $t$.
Thus $U_{1,1}$ is the enveloping algebra of
the three dimensional Heisenberg algebra over $\F$. Here
$U_{q,\rho}=R(\F[t],\alpha,t,\rho)$ and $U^\prime_{q,\rho}=R(\F[t^{\pm 1}],\alpha,t,\rho).$
In \cite{ks}, two one-parameter
quantizations or `$q$-analogues' of $U_{1,1}$  are discussed and contrasted. In our notation, these are $U_{q,q}$ and $U_{q,q^{-1}}$.
From Example~\ref{ALaurent}(ii) with $n=1$, we can see that $U^\prime_{q,\rho}$ is simple
if and only if $\rho=q^{-1}$. Hence  every non-zero ideal of $U_{q,q^{-1}}$ contains a power of $t$ whereas, in
$U_{q,q}$, the Casimir element $z=xy-(1-q^2)^{-1}t$ generates a non-zero proper ideal.

Suppose that $\rho q\neq 1$ so that  $U_{q,\rho}$ and $U^\prime_{q,\rho}$ have a Casimir element
$z=xy-(1-\rho q)^{-1}t$. Let $S$ be the localization of $U^\prime_{q,\rho}$ at the powers of $z$.
We apply Theorem~\ref{s-thm2} to determine when $S$ is simple. Firstly, $\F[t^{\pm 1}]$ is $\alpha$-simple if and only if $q$ is not a root of unity.
Secondly, for $j\in \Z$, there is a  $(0,-j)$-special element if and only if $\rho^jq^i=1$ for some $i\in \Z$, in which case
$\alpha(t^i)=\rho^{-j}t^i$. For all $m\geq 1$, $v^{(m)}=\qnum{m}{\rho q}t$ which is $0$ if
$(\rho q)^m=1$ and is a unit otherwise. By Theorem~\ref{s-thm2}, $S$ is simple if and only if
the subgroup of $\F^*$ generated by $q$ and $\rho$ is free abelian of rank $2$.
In particular, when $\rho=q$, $z-t^{-1}$ is normal in $U^\prime_{q,q}$
and generates a proper ideal of $S$.
\end{example}

For the remainder of the paper, we assume that the field $\F$ has characteristic $0$.
\begin{example}
[{\bf Ambiskew polynomial rings over finite cyclic group algebras}]\label{cyclic}
In all previous examples, the base ring $A$ has been a domain. Here we consider ambiskew polynomial rings over the group algebra $A:=\F C_n$ of the cyclic group $C_n=\langle s\rangle$ of order $n$.
 Fix a positive integer $n$ for which $\F$ contains a primitive root of unity and fix one such root of unity $\varepsilon$. Let $\alpha$ be the $\F$-automorphism of $\F C_n$
such that $\alpha(s)=\varepsilon s$. Let $v=c_0+c_1s+c_2s^2+\ldots+c_{n-1}s^{n-1}\in A$ and $\rho\in \F^*$.
The ambiskew polynomial ring $R(A,\alpha,v,\rho)$ is  the $\F$-algebra $R_1$ generated by $s, x$ and $y$ subject to the relations
\begin{eqnarray*}
s^n=1,\quad ys&=&\varepsilon sy,\quad xs=\varepsilon^{-1} sx,\\
xy-\rho yx&=&c_0+c_1s+c_2s^2+\ldots+c_{n-1}s^{n-1}.
\end{eqnarray*}
When $\rho=v=1$, $R$ is the skew group algebra $A_1(\F)*C_n$ for the action of $C_n$ in which the generator $s$ acts by $y\mapsto \varepsilon y$ and $x\mapsto \varepsilon^{-1}x$.
When $\rho=1$, $R$ is, in the notation of \cite{cbh}, the algebra $\mathcal{S}^\lambda$ in the case where $\Gamma=C_n$ and $\lambda=v$. It is also the symplectic reflection algebra for the diagonal action of $C_n$ on
$\mathfrak{h}\oplus\mathfrak{h}^*$, where $\mathfrak{h}$ is a $1$-dimensional vector space over $\F$ \cite{EG}.
From  \cite{cbh} we know that, when $\rho=1$, $R$ is simple for generic values of the parameters. We shall see that this is true if $\rho^n=1$ and that $R$ is never simple if $\rho^n\neq 1$. In the statement and proof of the following result the subscripts $i+d$ should be interpreted  modulo $n$.
\end{example}
\begin{prop}\label{simpleRCn}
With the above notation, let $R=R(A,\alpha,v,\rho)$. Then $R$ is simple if and only if there exists $i$, $0\leq i\leq n-1$, such that $\rho=\varepsilon^{-i}$,
$c_i\neq 0$ and \begin{equation}
\label{mc}
mc_i\neq-\sum_{d=1}^{n-1}\qnum{m}{\varepsilon^d}c_{i+d}\varepsilon^{ld}\end{equation}
 for all $m\geq 1$ and all $l$, $0\leq l\leq n-1$.
\label{simpleRCnprop}
\end{prop}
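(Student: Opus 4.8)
The plan is to derive the criterion directly from Theorem~\ref{r-thm0}, translating each of its three conditions into the stated arithmetic of the coefficients $c_j$, together with $\rho$ and $\varepsilon$. Since $\chr\F=0$ and $\F$ contains a primitive $n$th root of unity, $A=\F C_n$ is semisimple, so I would begin by introducing the orthogonal idempotents $e_j=\frac{1}{n}\sum_{k=0}^{n-1}\varepsilon^{-jk}s^k$, $0\leq j\leq n-1$, which identify $A$ with $\F^n$. One checks that $se_j=\varepsilon^je_j$ and, because $\alpha(s)=\varepsilon s$, that $\alpha(e_j)=e_{j-1}$ (indices mod $n$). Thus $\alpha$ permutes the minimal ideals $\F e_j$ in a single $n$-cycle; as every ideal of $A$ is a sum of these minimal ideals, the only $\alpha$-ideals are $0$ and $A$. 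Hence condition (i) of Theorem~\ref{r-thm0}, that $A$ be $\alpha$-simple, holds automatically for every $v$ and $\rho$, and plays no role in the criterion.

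For condition (ii) I would use that $A$ is commutative, so $v$ is $\gamma$-normal with $\gamma=\id_A$ and $\beta=\alpha^{-1}$, and a splitting element is just any $u\in A$ with $v=u-\rho\alpha(u)$. Writing $u=\sum_j b_js^j$ gives $u-\rho\alpha(u)=\sum_j b_j(1-\rho\varepsilon^j)s^j$, so such a $u$ exists precisely when $c_j=0$ for every $j$ with $1-\rho\varepsilon^j=0$. Since $\varepsilon$ is primitive, the equation $\rho=\varepsilon^{-j}$ can hold for at most one $j\in\{0,\dots,n-1\}$; consequently $(A,\alpha,v,\rho)$ is singular if and only if there is an $i$, necessarily unique, with $\rho=\varepsilon^{-i}$ and $c_i\neq0$. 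This is exactly the first half of the asserted criterion.

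It remains to translate condition (iii) under the assumption that such an $i$ exists. From $\alpha^l(v)=\sum_j c_j\varepsilon^{jl}s^j$ I would compute $v^{(m)}=\sum_{l=0}^{m-1}\rho^l\alpha^l(v)=\sum_j c_j\,\qnum{m}{\rho\varepsilon^j}\,s^j$. Applying the character $\chi_l$ with $\chi_l(s)=\varepsilon^l$ (equivalently, reading off the coefficient of $e_l$), $v^{(m)}$ is a unit of $A$ if and only if $\sum_j c_j\qnum{m}{\rho\varepsilon^j}\varepsilon^{lj}\neq0$ for all $l$. Substituting $\rho=\varepsilon^{-i}$ and reindexing by $j=i+d$ turns $\rho\varepsilon^{j}$ into $\varepsilon^{d}$ and extracts a nonzero factor $\varepsilon^{li}$, so the condition becomes $\sum_{d=0}^{n-1}c_{i+d}\qnum{m}{\varepsilon^d}\varepsilon^{ld}\neq0$; isolating the $d=0$ term, where $\qnum{m}{1}=m$, yields precisely inequality~\eqref{mc}. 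Combining the three translations with Theorem~\ref{r-thm0} gives the result.

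The only genuinely delicate point is bookkeeping rather than conceptual: I must carry the reindexing $j\mapsto i+d$ and the extraction of the $d=0$ term consistently modulo $n$, and apply correctly the description of units in $A\cong\F^n$ as those elements all of whose character values $\chi_l$ are nonzero. Everything else reduces to the geometric-series identity defining $v^{(m)}$ and the transitivity of $\alpha$ on the idempotents $e_j$.
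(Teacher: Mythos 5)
Your proof is correct and follows essentially the same route as the paper: both reduce the statement to Theorem~\ref{r-thm0} and translate its three conditions into coefficient arithmetic, with identical computations for the singularity criterion and for $v^{(m)}$. The only divergence is in verifying condition (i): you establish $\alpha$-simplicity by decomposing $A\cong\F^n$ via the idempotents $e_j$ and noting that $\alpha$ permutes them in a single $n$-cycle, whereas the paper uses a direct support-shrinking argument (passing from $a$ to $a-\alpha(a)$); your decomposition has the small added benefit of making explicit the character-value criterion for units that the paper invokes implicitly when requiring $f_m(\varepsilon^l)\neq 0$.
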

\begin{proof}
First note $A$ is $\alpha$-simple. To see this, let $I$ be a non-zero $\alpha$-ideal, let  $0\neq a=a_0+a_1s+a_2s^2+\ldots+a_{n-1}s^{n-1}\in I$ and let $e(a)=|\supp(a)|$ where
$\supp(a):=\{i:a_i\neq 0\}$. Multiplying by $s^i$ for some $i$ if necessary, we can assume that $0\in \supp(a)$.
Then $a-\alpha(a)\in I$, $e(a-\alpha(a))<e(a)$ and $e(a-\alpha(a))>0$ if $e(a)>1$. If $a\in I\backslash\{0\}$ is chosen with $s(a)$ minimal, it follows that $e(a)=1$ and hence that $I=A$.

Let $u=b_0+b_1s+b_2s^2+\ldots+b_{n-1}s^{n-1}\in A$. Then $u-\rho\alpha(u)=\sum_{i=0}^{n-1} (1-\rho\varepsilon^i)b_is^i$. It follows that $(A,\alpha,v,\rho)$ is conformal unless, for some $i$, $0\leq i\leq n-1$,
$\rho=\varepsilon^{-i}$ and $c_i\neq 0$. In the singular case, for $m\geq 1$ and $0\leq j\leq n-1$, the coefficient of $s^j$ in $\rho^m\alpha^m(v)$ and $v^{(m)}$ are, respectively, $(\varepsilon^{(j-i)m})c_j$ and $\qnum{m}{\varepsilon^{(j-i)}}c_j$. In particular, the coefficient of $s^i$ in $v^{(m)}$ is $mc_i$.
Thus
\begin{equation*}
v^{(m)}=f_m(s):=
\left(mc_i+\sum_{d=1}^{n-1}\qnum{m}{\varepsilon^d}c_{i+d}s^d\right)s^i.
\end{equation*}
As $v^{(m)}$ is a unit if and only if $f_m(\varepsilon^l)\neq 0$ for  $0\leq l\leq n-1$, the result now follows from Theorem~\ref{r-thm0}.
\end{proof}

\begin{rmk}
When $t\neq 0$ the sequence
$(\qnum{m}{\varepsilon^{t}})_{m\geq 0}$ is periodic, with period dividing $n$, and takes the value $0$ whenever $n$ divides $m$.
Therefore as $m$ varies, the set $F$ of values that can be taken by the right hand side of \eqref{mc}
is finite and includes $0$.
In particular $v^{(m)}=mc_is^i$ whenever $n$ divides $m$.
\end{rmk}

\begin{example}
\label{sra1}
In Example \ref{simpleRCn}, let $\rho=1$.  By Proposition~\ref{simpleRCnprop}, $R$ is simple if and only if
$c_0\neq 0$ and \[mc_0\neq-\sum_{j=1}^{n-1}\qnum{m}{\varepsilon^j}c_{j}\varepsilon^{lj}\]
 whenever $m\geq 1$ and $0\leq l\leq n-1$.

\end{example}
\begin{example}\label{R1n2}
In Example \ref{simpleRCn}, let $n=2$, so that $\varepsilon=-1$, the values of $\rho$ for which $R_1$ can be simple are $1$ and $-1$, and $v=c_0+c_1s$. Note that $\qnum{m}{-1}=1$ if $m$ is odd and $\qnum{m}{-1}=0$ if $m$ is even.
When $\rho=1$, \[f_m(s)=\begin{cases}
mc_0+c_1s\text{ if }m\text{ is odd}\\
mc_0\text{ if }m\text{ is even}\end{cases}\]
so $R$ is simple if and only if $c_0\neq 0$ and $c_1\neq \pm mc_0$ for all odd $m\in \N.$
 When $\rho=-1$, the relations for $R_1$ become
\[
s^2=1,\quad ys=\varepsilon sy,\quad xs=\varepsilon^{-1} sx, \quad xy+yx=c_0+c_1s
\]
and
\[f_m(s)=\begin{cases}
c_0+mc_1s\text{ if }m\text{ is odd}\\
mc_1s\text{ if }m\text{ is even}\end{cases}\]
so $R$ is simple if and only if $c_1\neq 0$ and $c_0\neq \pm mc_1$ for all odd $m\in \N.$
\end{example}

\begin{example}\label{R2n}
Let $R(\F C_n,\alpha,v,\rho)$ be as in Example~\ref{simpleRCn} and, with a view to iterating the ambiskew polynomial construction,
rename $x,y,v,\rho,\alpha$ and $R$ as $x_1,y_1,v_1,\rho_1,\alpha_1$ and $R_1$. Suppose that $R_1$ is simple, so that
$\rho_1=\varepsilon^{-i}$ for some $i$.  Suppose also that, for some $g, j$ with $0\leq g,j \leq n-1$, $\alpha_1^j(v_1)=\varepsilon^g v_1$.  The values $j=0$ and $g=0$ work for any choice of $v_1$ but other possibilities will be explored in Examples~\ref{R21} and \ref{R22} below.   Let $\lambda,\rho_2\in \F^*$. There is an $\F$-automorphism $\alpha_2$ of $R_1$
such that $\alpha_2(s)=\alpha_1^j(s)=\varepsilon^js$, $\alpha_2(y_1)=\lambda y_1$  and $\alpha_2(x_1)=\varepsilon^g\lambda^{-1} x_1$.
The choice of $v_2$, compared to the choice of $v_1$ in the construction of $R_1$ from the commutative algebra $\F C_n$, is restricted by the requirement that $v_2$ should be normal.
Let $0\leq h\leq n-1$, let $d\in \F^*$ and let
$v_2=ds^h$. Then $v_2$ is normal in $R_1$ inducing the inner automorphism $\gamma$ of $R_1$ such that
$\gamma(y_1)=\varepsilon^{-h}y_1$,
$\gamma(x_1)=\varepsilon^hx_1$ and $\gamma(s)=s$ while $\beta:=\gamma\alpha_2^{-1}$ is such that
$\beta(y_1)=\varepsilon^{-h}\lambda^{-1}y_1$,
$\beta(x_1)=\varepsilon^h\lambda x_1$ and $\beta(s)=s$.
The defining relations for $R:=R(R_1,\alpha_2,
v_2,\rho_2)$ are
\begin{align*}
s^n&=1,\\
y_1s=\varepsilon sy_1,&\quad x_1s=\varepsilon^{-1} sx_1,\\
y_2s=\varepsilon^j sy_2,&\quad x_2s=\varepsilon^{-j} sx_2,\\
x_2x_1=\varepsilon^h\lambda x_1x_2,&\quad x_2y_1=\varepsilon^{-h}\lambda^{-1} y_1x_2,\\
y_2x_1=\lambda^{-1} x_1y_2,&\quad y_2y_1=\lambda y_1x_2,\\
x_1y_1-\rho_1 y_1x_1&=c_0+c_1s+c_2s^2+\ldots+c_{n-1}s^{n-1},\\
x_2y_2-\rho_2y_2x_2&=ds^h.
\end{align*}
If $(R_1,\alpha_2,v_2,\rho_2)$ is conformal then the splitting element $u$, such that $v_2=u-\rho_2\alpha_2(u)$, must have the form $u=cs^h$ where $d=c(1-\varepsilon^{gh}\rho_2)$. Hence $(R_1,\alpha_2,v_2,\rho_2)$ is singular if and only if
$\rho_2=\varepsilon^{-gh}$. When $\rho_2=\varepsilon^{-gh}$, $v_2^{(m)}=mds^h$ is a unit for all $m\geq 1$ so, by Theorem~\ref{r-thm0},
$R_2=R(R_1,\alpha_2,ds^h,\varepsilon^{-gh})$ is simple.
\end{example}

\begin{example}\label{sra2}
One particular example of the construction of $R$ in Example~\ref{R2n} yields, after a change of generators, another symplectic reflection algebra. The symplectic reflection algebra $H_{t,c}(S_2)$ for the diagonal action of $S_2$ on $\mathfrak{h}\oplus\mathfrak{h}^*$, where $\mathfrak{h}=\F^2$ and $s$ acts on $\mathfrak{h}$ by transposition $(a,b)\mapsto (b,a)$ is generated by $s$, $w_1$, $z_1$, $w_2$, and $z_2$ subject to the following relations \cite{EG}:
\begin{align*}
s^2&=1,\\
sw_1=w_2s,&\quad sz_1=z_2s,\\
w_1w_2=w_2w_1,&\quad z_1z_2=z_2z_1,\\
w_1z_1=z_1w_1-t+cs,&\quad
w_2z_2=z_2w_2-t+cs,\\
w_1z_2=z_2w_1-cs,&\quad
w_2z_1=z_1w_2-cs,
\end{align*}
where $t,c\in \F$. If we change generators to $s, x_1:=(z_1+z_2)/2,
y_1:=(w_1-w_2)/2, x_2:=(z_2-z_1)/2$ and $y_2:=(w_2-w_1)/2$ then the relations become:
\begin{align*}
s^2&=1,\\
x_1s=-sx_1,&\quad
y_1s=-sy_1,\\
x_2s=sx_2,&\quad
y_2s=sy_2,\\
x_2x_1=x_1x_2&\quad
x_2y_1=y_1x_2,\\
y_2x_1=x_1y_2,&\quad
y_2y_1=y_1y_2,\\
x_1y_1-y_1x_1=2t-4cs,&\quad
x_2y_2-y_2x_2=2t.
\end{align*}
To construct this using the method of Example~\ref{R2n}, first form $R_1=R(\F C_2,\alpha_1,2t-4cs,1)$.
Then take $j=g=h=0$, $\lambda=1$, $d=2t$ and $\rho_2=1=\varepsilon^{gh}$ and take $\alpha_2=\id_{R_1}$.
By Example~\ref{R1n2}, $R_1$ is simple if and only if  $t\neq 0$ and $2c\neq \pm mt$ for all odd $m\geq 1$.
As $\rho_2=\varepsilon^{gh}$, it follows from Example~\ref{R2n} that if $R_1$ is simple then so too is $R$. The converse is true by Theorem~\ref{r-thm0}, because $\alpha_2=\gamma=\id$, so $R$ is simple if and only if  $t\neq 0$ and $2c\neq \pm mt$ for all odd $m\geq 1$.

This example can be obtained by iterating the ambiskew construction in an alternative order. If $R_2$ is the subalgebra generated by $x_2$ and $y_2$, then $B_1=R(\F C_2,\id_\F,2t,1)$, which, provided $t\neq 0$, is isomorphic to the group algebra
$A_1(\F)C_2$ which is not simple but is $\tau$-simple for the automorphism $\tau$ of
$R_2$ under which $s\mapsto -s$, $x_2\mapsto x_2$ and $y_2\mapsto y_2$. The element $2t-4cs$ is central in $R_2$ and
$R=R(R_2,\tau,2t-4cs,1)$. It is a routine matter to use Theorem 3.10 to get an alternative proof that
$R$ is simple if and only if  $t\neq 0$ and $2c\neq \pm mt$ for all odd $m\geq 1$.
\end{example}

\begin{example}\label{R21} Here we give specific instances of Example~\ref{R2n} in which $j$ need not be $0$.
Let $R_1=R(\F C_n,\alpha_1,s^a,\varepsilon^{-a})$, where $0\leq a\leq n-1$.
Then \[y_1s=\varepsilon sy_1,\;x_1s=\varepsilon^{-1}sx_1\text{ and }x_1y_1-\varepsilon^{-a}y_1x_1=s^a,\]
$(\F C_n,\sigma_1,s^a,\varepsilon^{-a})$ is singular, each $v^{(m)}=ms^a$ and $R_1$ is simple.
 For any $j$, $0\leq j\leq n-1$, observe that $\alpha_1^j(s^a)=\varepsilon^{aj}s^a$ so, in the notation of Example~\ref{R2n}, $g=aj$.  Let $\lambda\in \F^*$. There is an $\F$-automorphism $\alpha_2$ of $R_1$ such that $\alpha_2(s)=\varepsilon^js$, $\alpha(y_1)=\lambda y_1$ and $\alpha(x_1)=\varepsilon^{aj}\lambda^{-1}x_1$. For each $0\leq b\leq n-1$, $s^b$ is normal in $R_1$ and, for each $\rho\in \F^*$, we can form $R_2=R(R_1,\alpha,s^b,\rho)$.
The extra relations for $R_2$ are:
\begin{align*}
y_2s=\varepsilon^j sy_2,&\quad
x_2s=\varepsilon^{-j} sx_2,\\
y_2x_1=\varepsilon^{aj}\lambda^{-1} x_1y_2,&\quad
y_2y_1=\lambda y_1y_2,\\
x_2x_1=\varepsilon^{a(1-j)}\lambda x_1x_2,&\quad
x_2y_1=\varepsilon^{-a}\lambda^{-1}  y_1x_2,\\
x_2y_2-\rho y_2x_2&=s^b.
\end{align*}
Here $(R_1,\alpha_2,\rho,s^b)$ is singular if $\rho=\varepsilon^{-jb}$ in which case $v^{(m)}=ms^b$ for all $m\geq 1$ and $R_2$ is simple.
\end{example}

\begin{example}\label{R22} Our purpose here is to exhibit an instance of Example~\ref{R2n} in which neither $v_1$ nor $v_2$ is homogeneous in the $C_n$-grading of $\F C_n$.
Let $n=4$, $\varepsilon=\rho=i$ and $v_1=s+\mu s^3$, where $\mu\in \F^*\backslash\{1,-1\}$.
Thus \[y_1s=isy_1,\;x_1s=-isy_1\text{ and }x_1y_1-iy_1x_1=s+\mu s^3.\]
It is easy to check that $(\F C_4,\alpha_1,s+\mu s^3,i)$ is singular and that $v_1$ is a unit with inverse $(1-\mu^2)^{-1}(s^3-\mu s)$.

The sequence $(i^j\alpha_2^j(v))_{j\geq 1}$ has period two and repeating block
 $s+\mu s^3, -s+\mu s^3$. Therefore
 \[v_1^{(m)}=\begin{cases}
 s+ m\mu s^3\text{ if }m\text{ is odd }\\
 m\mu s^3\text{ if }m\text{ is even }
 \end{cases}\]
By Theorem \ref{r-thm0}, $R_1$ is simple if and only if $\mu\neq \pm1/a$ for all odd positive integers $a$.

Note that $\alpha_1^2(v_1)=-v_1=\varepsilon^2 v_1$ so, taking $j=g=2$ in Example~\ref{R2n},
there is, for each $\lambda\in \F^*$, an $\F$-automorphism $\alpha_2$ of $R_1$ such that $\alpha_2(s)=-s$, $\alpha_2(y_1)=\lambda y_1$ and
$\alpha_2(x_1)=-\lambda^{-1} x_1$. Let $v_2=v_1=s+\mu s^3$. If $\gamma$ is the inner automorphism  of $R_1$ induced by $v_2$ and $\beta=\gamma\alpha_2^{-1}$ then
\begin{align*}
\gamma(s)=s,&\quad \beta(s)=-s,\\
\gamma(y_1)=\frac{i}{\mu^2-1}\left(\mu^2+1+2\mu s^2\right)y_1,&\quad
\gamma(x_1)=\frac{-i}{\mu^2-1}\left(\mu^2+1+2\mu s^2\right)x_1,\\
\beta(y_1)=\frac{i\lambda^{-1}}{\mu^2-1}\left(\mu^2+1+2\mu s^2\right)y_1,&\quad
\beta(x_1)=\frac{i\lambda}{\mu^2-1}\left(\mu^2+1+2\mu s^2\right)x_1.
\end{align*}
As $R_1$ is $\Z$-graded with $\deg(y_1)=1, \deg(x_1)=-1$ and $\deg(s)=0$ and $\alpha_2$ respects this grading,
it follows easily that $(R_1,\alpha_2,s+\mu s^3,-1)$ is singular whereas, for example, $(R_1,\alpha_2,s+\mu s^3,i)$ is conformal.
The relations for $R_2:=R(R_1,\alpha_2,s+\mu s^3,-1)$ are the relations above for $R_1$ together with:
\begin{align*}
y_2s=-sy_2,&\quad
x_2s=-sx_2,\\
y_2x_1=-\lambda^{-1}x_1y_2,&\quad
y_2y_1=\lambda y_1y_2,\\
x_2x_1=\frac{i\lambda}{\mu^2-1}\left(\mu^2+1+2\mu s^2\right)x_1x_2,&\quad
x_2y_1=\frac{i\lambda^{-1}}{\mu^2-1}\left(\mu^2+1+2\mu s^2\right)y_1x_2,\\
x_2y_2+y_2x_2&=s+\mu s^3.
\end{align*}
Let $\mu$ be such that $R_1$ is simple, that is $a\mu\neq 1$ for all odd integers $a$. For $m\geq 1$, $v_2^{(m)}=mv$ is a unit so, by Theorem~\ref{r-thm0},
$R_2$ is also simple.
\end{example}

The second author is no longer active in mathematics research. Enquiries and comments should be addressed to the first author.
\end{document}